\definecolor{mygray}{gray}{.6}
\newcommand\duality[2]{\langle #1, #2\rangle_{H^{-1/2}(\partial\Omega),H^{1/2}(\partial\Omega)}}
\newcommand\dualitysh[2]{\langle #1, #2\rangle}
\newtheorem{thm}{Theorem}[section]
\newtheorem{lem}[thm]{Lemma}
\newtheorem{lemma}[thm]{Lemma}
\newtheorem{theorem}[thm]{Theorem}
\newtheorem{cor}[thm]{Corollary}
\theoremstyle{definition}
\newtheorem{definition}[thm]{Definition}
\newtheorem{example}[thm]{Example}
\newcommand\setD{\mathcal{D}}
\newcommand\setE{\mathcal{E}}
\newcommand\topoD{\Delta}
\numberwithin{equation}{section}
\theoremstyle{remark}
\newtheorem{remark}[thm]{Remark}
\newcommand\eps{\epsilon}
\newcommand\R{\mathbb{R}}
\newcommand\N{\mathbb{N}}
\newcommand\Z{\mathbb{Z}}
\newcommand\C{\mathbb{C}}
\newcommand\curl{\mathrm{curl\,}}
\newcommand\Div{\mathrm{div\,}}
\newcommand\sym{\mathrm{sym}}
\newcommand\loc{\mathrm{loc}}
\newcommand\calL{\mathcal L}
\newcommand\calH{\mathcal H}
\newcommand\calA{\mathcal A}
\newcommand\weakto{\rightharpoonup}
\newcommand{\weakstarto}{\ensuremath{\overset{\ast}{\rightharpoonup}}}
\newcommand\Id{\mathrm{Id}}
\newcommand\tr{\mathop{\mathrm{tr}}}
\newcommand\Span{\mathop{\mathrm{span}}}
\begin{document}
\title{Data-Driven Problems in Elasticity}

\author[S.~Conti, S.~M\"uller and M.~Ortiz]
{S.~Conti$^1$, S.~M\"uller$^{1,2}$ and M.~Ortiz$^{1,2,3}$}

\address
{
    $^1$ Institut f\"ur Angewandte Mathematik, Universit\"at Bonn,
    Endenicher Allee 60,
    53115 Bonn, Germany.
}
\address
{
    $^2$ Hausdorff Center for Mathematics,
    Endenicher Allee 60,
    53115 Bonn, Germany.
}

\address
{
    $^3$ Division of Engineering and Applied Science,
    California Institute of Technology,
    1200 E.~California Blvd., Pasadena, CA 91125, USA.
}


\begin{abstract}
We consider a new class of problems in elasticity, referred to as Data-Driven problems, defined on the space of strain-stress field pairs, or phase space. The problem consists of minimizing the distance between a given material data set and the subspace of compatible strain fields and stress fields in equilibrium. We find that the classical solutions are recovered in the case of linear elasticity. We identify conditions for convergence of Data-Driven solutions corresponding to sequences of approximating material data sets. Specialization to constant material data set sequences in turn establishes an appropriate notion of relaxation. We find that relaxation within this Data-Driven framework is fundamentally different from the classical relaxation of energy functions. For instance, we show that in the Data-Driven framework the relaxation of a bistable material leads to material data sets that are not graphs.
\end{abstract}

\maketitle


\def\ie{i.~e.}
\def\ae{a.~e.}
\def\eg{e.~g.}

\section{Introduction}

To date, the prevailing and classical scientific paradigm in materials science has been to calibrate empirical material models using observational data and then use the calibrated material models to define initial-boundary value problems. This process of modelling inevitably adds error and uncertainty to the solutions, especially in systems with high-dimensional phase spaces and complex material behavior. The modelling error and uncertainty arises mainly from imperfect knowledge of the functional form of the material laws, the phase space in which they are defined, and from scatter and noise in the experimental data.

Against this classical backdrop, remarkable advances in experimental science over the past few decades, including atomic probing, digital imaging, microscopy,  and diffraction methods, have radically changed the nature of materials science and engineering from {\sl data-starved fields} to, increasingly, {\sl data-rich fields}. In addition, multiscale analysis presently allows for accurate and reliable {\sl Data Mining} of material data from lower scales. These advances open the way for the application to materials science of concepts from the emerging field of {\sl Data Science} (cf., e.~g., \cite{WD002, WD023}). Specifically, the abundance of data suggests the possibility of a new scientific paradigm, to be referred to as the {\sl Data-Driven paradigm}, consisting of reformulating the classical initial-boundary-value problems directly from material data, thus bypassing the empirical material modelling step of traditional materials science and engineering altogether. In this manner, material modelling empiricism, error and uncertainty are eliminated entirely and no loss of experimental information is incurred. Data Science currently influences primarily non-STEM fields such as marketing, advertising, finance, social sciences, security, policy, and medical informatics, among others. By contrast, the full potential of Data Science as it relates to problems in materials science and engineering has yet to be explored and realized.

A mathematical connection between Data-Science and materials science can be forged as follows. We begin by noting that the field theories of science have a common general structure. Perhaps the simplest field theory is potential theory, which arises in the context of Newtonian mechanics, hydrodynamics, electrostatics, diffusion, and other fields of application. In this case, the field $u : \Omega \to \mathbb{R}$ that describes the state of the system is scalar. The localization law that extracts from $u$ the local state at a given material point $x \in \Omega$ is
\begin{equation}\label{thla1I}
    \epsilon(x) = \nabla u(x),
\end{equation}
\ie, the localization operator is simply the gradient of the field, together with essential boundary conditions of the Dirichlet type. Evidently, (\ref{thla1I}) constrains the field $\epsilon : \Omega \to \mathbb{R}^n$ to be a gradient, \ie, to be {\sl compatible}. The corresponding conjugate variable is the flux $\sigma : \Omega \to \mathbb{R}^n$. The flux satisfies the conservation equation
\begin{equation}\label{2RLefr}
    \nabla\cdot\sigma(x) + \rho(x) = 0,
\end{equation}
where $\nabla\cdot$ is the divergence operator and $\rho$ is a source density, together with natural boundary conditions of the Neumann type. The pair $z(x) = (\epsilon(x),\sigma(x))$ describes the local state of the system at the material point $x$ and the function $z$ maps $\Omega$ to the {\sl local phase space} ${Z}_{\rm loc} =\mathbb{R}^n\times\mathbb{R}^n$. The collection of state functions $z : \Omega \to {Z}_{\rm loc}$ defines the {\sl global state space} ${Z}$. We note that the phase space, compatibility and conservation laws are universal, \ie, material independent. We may thus define a material-independent {\sl constraint set} ${\setE} \subset {Z}$ to be the set of states $z = (\epsilon,\sigma)$ consistent with the compatibility and conservation laws (\ref{thla1I}) and (\ref{2RLefr}), respectively, as well as  corresponding essential and natural boundary conditions thereof.

By way of contrast, the material law
\begin{equation}\label{j8EmOa}
    \sigma(x) = \sigma(\epsilon(x) )
\end{equation}
that closes the equations is often only known imperfectly through a {\sl local material data set} ${\setD}_{\rm loc}$ in local phase space ${Z}_{\rm loc}$ that collects the totality of our empirical knowledge of the material. A typical local material data set consists of a finite number of local states, ${\setD}_{\rm loc} = \{(\epsilon_i,\sigma_i),\ i=1,\dots,N\}$. A corresponding {\sl global material data set} ${\setD} \subset {Z}$ can then be identified with
\begin{equation}
    {\setD} = \{z \in {Z} : \ z(x) \in {\setD}_{\rm loc} \}
\end{equation}
Evidently, for a material data set of this type, the intersection ${\setD} \cap {\setE}$ is likely to be empty, \ie, there may be no points in the material data set that are compatible and satisfy the conservation laws, even in cases when solutions could reasonably be expected to exist. It is, therefore, necessary to replace the overly-rigid characterization of the solution set ${S} = {\setD} \cap {\setE}$ by a suitable relaxation thereof. One such relaxed formulation \cite{WD005} consists of accepting as the best possible solution the state $z$ in the material data set ${\setD}$ that is closest to the constraint set ${\setE}$, \ie, which is closest to satisfying compatibility and the conservation law. Closeness is understood in terms of some appropriate distance $d$ defined on the state space ${Z}$. The Data-Driven solution set is, then,
\begin{equation}\label{trLu7O}
    {S} = {\rm argmin} \{ d(z,{\setD}),\ z \in {\setE} \} .
\end{equation}
We emphasize that Data-Driven solutions are determined directly from the material data set ${\setD}$ and that no attempt is made at modeling, \ie, at approximating the local material data sets ${\setD}_{\rm loc}$ by means of a graph of the form (\ref{j8EmOa}).

In general, we consider systems whose state is characterized by points $z$ in a metric space $({Z},d)$. The compatibility, conservation and boundary constraints acting on the system have the effect of restricting its state to a subset ${\setE} \subset {Z}$. In addition, the behavior of the material is described by a material data set ${\setD} \subset {Z}$. The corresponding Data-Driven problem is then (\ref{trLu7O}). A number of fundamental questions arise in connection with this new class of problems. It is clear that the range of Data-Driven problems is larger than that of classical problems since the local material data sets, even if they define a curve in phase space, need not be a graph. It is therefore of interest to know if the classical solutions are recovered when the local material data sets ${\setD}_{\rm loc}$ are a graph $(\epsilon, \sigma(\epsilon))$. Secondly, it is of interest to elucidate the dependence of the Data-Driven solutions on the material data sets. In particular, we wish to ascertain the convergence properties of sequences $(z_h)$ of Data-Driven solutions generated by sequences $({\setD}_h)$ of material data sets. Finally, a central question of analysis concerns the existence of Data-Driven solutions and, in cases of non-attainment, the relaxed form of the Data-Driven problem.

\begin{figure}[ht]
    \includegraphics[width=0.66\linewidth]{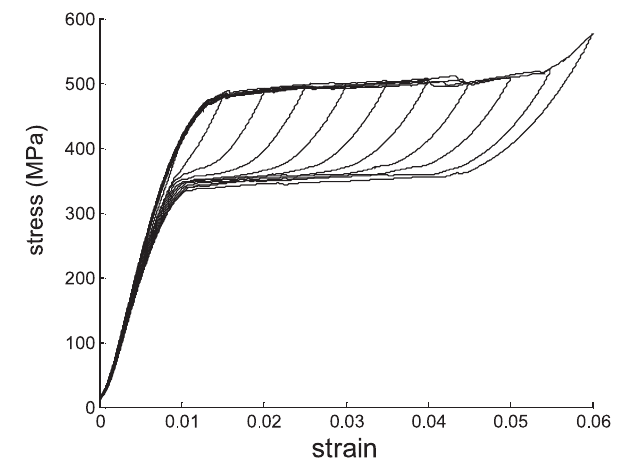}
    \caption{Experimental hysteresis loops of Ni–Ti shape memory alloy (SMA) wires at various strain amplitudes. Figure
    reproduced with permission from \cite[Fig. 3]{Ren2007},
    \copyright\ IOP Publishing.  All rights reserved.} \label{KlUql9}
\end{figure}

We investigate these questions in the special case of elasticity under the linearized kinematics approximation. We start from a situation in which the data set is weakly closed and therefore amenable to more straightforward analysis. We specifically consider the simple case of linear elasticity and seek solutions in the phase space of $L^2$ strain-stress field pairs. The Data-Driven problem then consists of minimizing the distance between a given material data set and the subspace of compatible strain fields and stress fields in equilibrium. In the case of linear elasticity, we find that the classical solutions are recovered. Similar reasoning can be applied to data sets that take the form of a monotone graph in phase space. We also identify conditions for convergence of Data-Driven solutions corresponding to sequences of approximating material data sets. Specialization to constant material data set sequences in turn establishes an appropriate notion of relaxation. We find that relaxation within this Data-Driven framework is fundamentally different from the classical relaxation of energy functions. For instance, we show that in the Data-Driven framework the relaxation of a bistable material leads to material data sets that are not graphs. The relaxed local material data set is reminiscent of the 'flag' sets that are covered by hysteretic loops in tests of shape memory alloy wires, Fig.~\ref{KlUql9}. This similarity suggests a useful role for Data-Driven analysis in connection with the characterization of such materials. These results also illustrate the fact that Data-Driven problems subsume---and are strictly more general than---classical energy minimization problems.

The point of view that the nonlinear partial differential equations (pdes) of continuum mechanics can be written as a set of linear pdes (balance laws) and nonlinear pointwise relations between the quantities in the balance laws (constitutive relations) has been emphasized by Luc Tartar since the 1970s. He has also stressed the importance to understand the relation between the linear pde constraints and the pointwise constraints in the context of \emph{weak convergence.} Weak convergence arises naturally in the context of effective properties of heterogeneous media (homogenization), existence, optimal control and in elucidating which quantities can effectively be measured.

Tartar’s  method of {\sl compensated compactness} (developed in collaboration with Fran\c{c}ois Murat) provides a powerful mathematical framework for analyzing which nonlinear relations are stable under weak convergence in the presence of linear pde constraints. For an early exposition of these ideas, see \cite{Tartar1979}. For further developments, see \cite{Tartar1985,Tartar1990_Hmeas} and the monograph \cite{Tartar2009Buch}. A key result in the theory of compensated compactness is the {\sl div-curl Lemma} \cite{Murat1978, Tartar1978, Tartar1979, Murat1981, Tartar1982}. This lemma is closely related to the weak continuity of determinants, which plays a fundamental role in nonlinear elasticity and quasiconformal geometry \cite{Reshetnyak1967, Reshetnyak1968, Ball1977}.

Many questions arising in the study of effective properties of composites may also be formulated in the present framework. For example, two-material composites in a geometrically linear setting correspond to the case in which the set $\setD_\loc$ is the union of two graphs in the strain-stress phase space. The computation of its relaxation $\overline \setD_\loc$ is then related to the computation of the $G$-closure. We refer to \cite{Willis1981, MuratTartar1985, Tartar1986, FrancfortMurat1986, KohnStrang1986I, KohnStrang1986II, NesiMilton1991, Nesi1995} for early results and to the monographs \cite{Cherkaev2000Buch, Allaire2002Buch, Milton2002, Tartar2009Buch} for modern reviews of the subject from different viewpoints.

In this paper, we introduce a new approach for the variational study of materials that is not based on energy minimization but instead accords the equilibrium equations and material data set a central role. We do not aim for the most general abstract setting. For conceptual clarity we instead focus on two illustrative examples, namely, linear elasticity in Section \ref{seclinear} and the geometrically linear two-well problem in Section \ref{secrelax}. Specifically, we develop a concept of relaxation in the stress-strain phase space that enforces strongly the differential constraints of the stress being divergence-free and the strain being a symmetrized gradient, while the constitutive equation is enforced only in an asymptotic sense. This notion of relaxation can also be couched in terms of $\calA$-quasiconvexity, see \cite[p. 14 and p.100-112]{Dacorogna1982Buch} and \cite{FonsecaMueller1999}, but a detailed elucidation of this connection is beyond the scope of this paper.

It is instructive to briefly compare the present approach to relaxation by energy minimization. By way of example, in Section \ref{sectwowell} we consider the geometrically linear 
two-well problem. In the context of energy minimization, the two-well problem entails the study of energy densities that are the minimum of two quadratic forms, \eg, $W(\xi)=\min\{\frac12\C (\xi-a)\cdot(\xi-a), \frac12\C (\xi-b)\cdot(\xi-b)\}$. 
Of central interest is the effective energy that describes the weak lower semicontinuous envelope of the integral functional $\int_\Omega W(Du)dx$. 
There exists a large body of literature on the subject. The geometrically linear case of interest here was addressed early on in \cite{Khachaturyan1967some, KhachaturyanShatalov1969theory, Roitburd69,RN26}. 
The corresponding geometrically nonlinear theory was developed in \cite{BallJames87} in the particular context of martensitic phase transitions. These problems have spawned a large body of research, typically concerned with finding the zero-set of the relaxation. 
The present approach is fundamentally different since, instead of minimizing energy, we allow for all sequences of stresses and strains that obey the compatibility and equilibrium constraints, which results in a larger envelope, see discussion in Section \ref{sectwowell}. 
In a different but related context, the observation that for problems of this type the set of solutions of the equilibrium equations 
is much larger than the set of local minimizers of the energy---and that it indeed contains much wilder solutions---was made 
in particular in \cite{MuellerSverak1998, MuellerSverak2003}.

\section{Linear elasticity}
\label{seclinear}
\subsection{General setting}

\begin{figure}[ht]
	\begin{subfigure}{0.45\textwidth}\caption{} \includegraphics[width=0.99\linewidth]{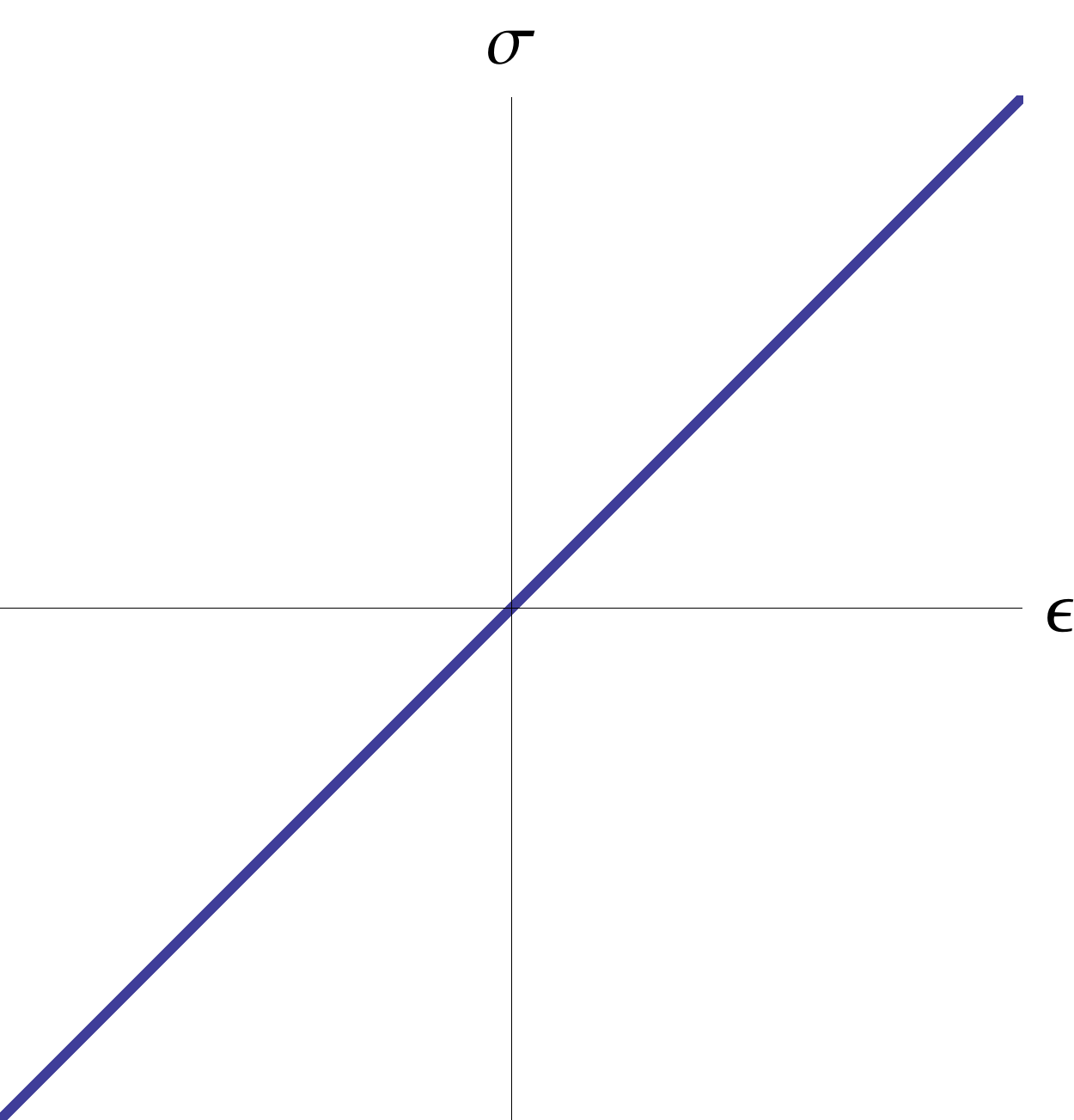}
	\end{subfigure}
    $\quad$
	\begin{subfigure}{0.45\textwidth}\caption{} \includegraphics[width=0.99\linewidth]{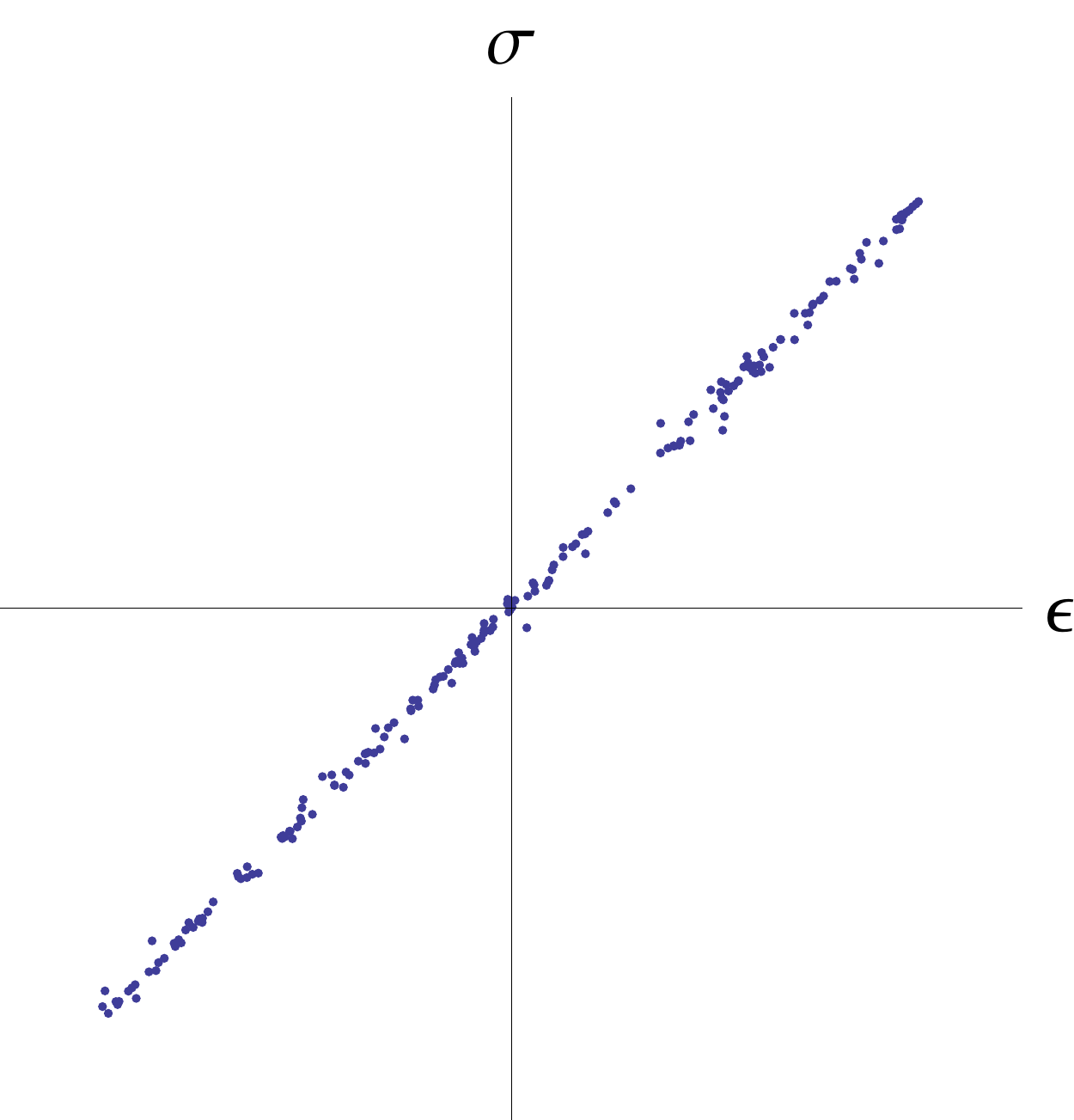}
	\end{subfigure}
	\caption{a) Local material data set for linearized elasticity. b) Sampled local material data set for linearized elasticity.} 	\label{3rlASw}
\end{figure}

We consider an elastic body occupying a bounded, connected Lipschitz set $\Omega \subset \mathbb{R}^{n}$ with state defined by a displacement field $u : \Omega \to \mathbb{R}^{n}$. The corresponding compatibility and equilibrium laws are
\begin{subequations}\label{sIes1A}
\begin{align}
    &
    \epsilon(x) = \frac{1}{2} \big( \nabla u(x) + \nabla u^T(x) \big) ,
    &  \text{in } \Omega ,
    \\ & \label{eqGammaD}
    u(x) = g(x) ,
    &   \text{on } \Gamma_D ,
\end{align}
\end{subequations}
and
\begin{subequations}\label{fRoa1l}
\begin{align}
    &
    {\rm div} \sigma(x) + f(x) = 0 ,
    &
    \text{in } \Omega ,
    \\ &
    \sigma(x) \nu(x) = h(x) ,
    &  \label{eqGammaN}
    \text{on } \Gamma_N ,
\end{align}
\end{subequations}
where $\epsilon \in L^2( \Omega; \mathbb{R}^{{n}\times {n}}_{\rm sym})$ is the strain tensor field and $\sigma\in L^2( \Omega ; \mathbb{R}^{{n}\times {n}}_{\rm sym})$ is the stress tensor field, $f \in L^2(\Omega ; \mathbb{R}^{n})$ are body forces, $g \in H^{1/2}( \partial\Omega ; \mathbb{R}^{n})$ boundary displacements, $h \in H^{-1/2}(\partial\Omega; \mathbb{R}^{n})$ applied tractions and  $\nu:\partial\Omega\to S^{n-1}$ denotes the outer normal.

\def\OmegaRk{(\Omega;\R^{n})}
\def\OmegaRkn{(\Omega;\R^{n\times n})}
\def\randOmegaRk{(\partial\Omega;\R^{n})}
In order to explain the meaning of the boundary conditions (\ref{eqGammaD}) and (\ref{eqGammaN})  we recall some properties of the function spaces involved. It is well known that there exists a unique linear continuous operator $T_0:H^1\OmegaRk\to L^2\randOmegaRk$ such that $T_0 u(x) = u(x)$ for all $u\in C^1(\bar\Omega;\R^{n})$ and $x\in\partial\Omega$. The space $H^{1/2}\randOmegaRk$ is defined as the range of this operator, namely,  $H^{1/2}\randOmegaRk=T_0(H^1\OmegaRk)\subset L^2\randOmegaRk$, with the induced metric, so that $T_0$ is continuous from $H^1\OmegaRk$ to $H^{1/2}\randOmegaRk$. The Dirichlet boundary condition (\ref{eqGammaD}) means that $T_0u(x)=g(x)$ for $\calH^{n-1}$-almost every $x\in\Gamma_D\subset\partial\Omega$.

The space $H^{-1/2}\randOmegaRk$ is defined as the dual of  $H^{1/2}\randOmegaRk$. Following \cite{Temam1979BuchNavierStokes} we define $E\OmegaRkn$ as the set of $w\in L^2\OmegaRkn$ such that $\Div w \in L^2\OmegaRk$. Then, it can be shown that there exists a unique linear continuous operator $T_\nu: E\OmegaRkn\to H^{-1/2}\randOmegaRk$ such that $(T_\nu w)(x) = (w\cdot\nu)(x)$ for all $w\in C^1(\bar\Omega;\R^{n\times n})$ and $\calH^{n-1}$-almost all $x\in\partial\Omega$ (we recall that $\nu$ is the outer normal). Furthermore, it follows that
\begin{equation}\label{eqpartintegrh12hm12}
    \int_\Omega w \cdot Du\, dx + \int_\Omega \Div w \cdot u \,dx = \duality{T_\nu w}{T_0u}.
\end{equation}
for any $w\in E\OmegaRkn$ and $u\in H^1\OmegaRk$. Finally, $T_\nu$ is surjective, in the sense that for any $\theta\in H^{-1/2}\randOmegaRk$ there is $w\in E\OmegaRkn$ such that $T_\nu w=\theta$. Proofs of these results are given, \eg, in \cite[Sect. 1.2 and 1.3]{Temam1979BuchNavierStokes}. The Neumann boundary condition (\ref{eqGammaN}) specifically means that
\begin{equation}
    \duality{T_\nu\sigma}{\psi} = \duality{h}{\psi} ,
\end{equation}
for any $\psi\in H^{1/2}(\partial\Omega;\R^n)$ that obeys $\psi=0$  $\calH^{n-1}$-almost everywhere on $\partial\Omega\setminus\Gamma_N$.

In the remainder of the paper we shall simply write $u$ and $\sigma\nu$ for the traces $T_0u$ and $T_\nu \sigma$. Furthermore, we shall assume throughout that $\Omega \subset \mathbb{R}^{{n}}$ is a connected, open, bounded, nonempty Lipschitz set, and that $\Gamma_D$, $\Gamma_N$ are disjoint open subsets of $\partial\Omega$ with
\begin{equation}\label{eqassGammaDN}
    \overline{\Gamma}_D \cap \overline{\Gamma}_N = \partial\Omega,\hskip3mm
    \calH^{n-1}(\overline\Gamma_N\setminus\Gamma_N)=\calH^{n-1}(\overline\Gamma_D\setminus\Gamma_D)=0,
    \hskip3mm\Gamma_D\ne\emptyset.
 \end{equation}
We remark that, if $\Gamma_D$ and $\Gamma_N$ are Lipschitz subsets of $\partial\Omega$, then extension is possible and it is possible to take  $g \in H^{1/2}( \Gamma_D ; \mathbb{R}^{n})$ and $h \in H^{-1/2}(\Gamma_N; \mathbb{R}^{n})$.

\subsection{Linear elasticity}
\label{jouM0a}

We define the phase space as
\begin{equation}\label{eqdefZlinearelast}
    Z
    =
    L^2(\Omega;\mathbb{R}^{{n} \times {n}}_{\rm sym})
    \times
    L^2(\Omega;\mathbb{R}^{{n}\times {n}}_{\rm sym})
\end{equation}
and metrize it  by the norm
\begin{equation}\label{nIus4o}
    \| z \|
    =
    \left(
        \int_\Omega
            \Big(
                \frac{1}{2} \mathbb{C}\epsilon \cdot \epsilon
                +
                \frac{1}{2} \mathbb{C}^{-1} \sigma \cdot \sigma
            \Big)
        \, dx
    \right)^{1/2} ,
\end{equation}
where $\mathbb{C} \in L(\mathbb{R}^{{n}\times {n}}_{\rm sym})$, $\mathbb{C}^T = \mathbb{C}$, $\mathbb{C} > 0$, is a nominal elasticity tensor.
We shall tacitly identify $\C$ with a tensor in $L(\mathbb{R}^n)$ by $\C\xi=\C(\xi+\xi^T)/2$.
The Data-Driven problem of linear elasticity is then characterized by the material data set
\begin{equation}\label{thiUw4}
    {\setD}
    =
    \{ (\epsilon,\sigma) \in Z \, : \ \sigma = \mathbb{C} \epsilon \} ,
\end{equation}
Fig.~\ref{3rlASw}a, and the constraint set
\begin{equation}\label{s6oAzi}
    {\setE} = \{ z \in {Z} : \  (\ref{sIes1A}) \text{ and } (\ref{fRoa1l}) \} .
\end{equation}
A straightforward calculation gives
\begin{equation}\label{Y6ebRi}
    d^2(z,{\setD})
    =
    \int_\Omega
        \frac{1}{4}
        \mathbb{C}^{-1}
        (\sigma - \mathbb{C} \epsilon)
        \cdot
        (\sigma - \mathbb{C} \epsilon)
    \, dx ,
\end{equation}
where for $z\in Z$ and $A\subseteq Z$ we write $d(z,A)=\inf\{\|z-a\|: a\in A\}$. In addition, by (\ref{eqpartintegrh12hm12}) on ${\setE}$ we have the power identity
\begin{equation}\label{GIe2oa}
    \int_\Omega \sigma(x) \cdot \epsilon(x) \, dx
    =
    \int_\Omega
        f(x) \cdot u(x)
    \, dx +
    \duality{\sigma\nu}{u}
\end{equation}
where $\nu$ is the outward unit normal to the boundary. We remark that
\begin{equation}
\begin{split}
    \duality{\sigma\nu}{u}
=   & \int_{\Gamma_D}
        \sigma(x)\nu(x) \cdot g(x)
    \, d\mathcal{H}^{n-1}(x)
   \\& +
    \int_{\Gamma_N}
        h(x) \cdot u(x)
    \, d\mathcal{H}^{n-1}(x) ,
    \end{split}
\end{equation}
for regular functions that obey (\ref{fRoa1l})-(\ref{sIes1A}).

The following lemma generalizes the classical Helmholtz-Hodge decomposition theorem to tensor fields (cf., \eg, \cite{GiraultRaviart1979,Temam1979BuchNavierStokes}). Here and subsequently, we write
\begin{equation}\label{Hlusp6}
    {e}(u) = \frac{1}{2}(\nabla u + \nabla u^T) ,
\end{equation}
for the strain operator on displacement vector fields $u \in H^1(\Omega; \mathbb{R}^{n})$.

\begin{lem}[Tensor Helmholtz decomposition]\label{B4Apia}
Let $\Omega \subset \mathbb{R}^{{n}}$ be open, bounded, Lipschitz. Let $\Gamma_D$, $\Gamma_N$ be disjoint open subsets of $\partial\Omega$ that obey (\ref{eqassGammaDN}).
Let 
\begin{subequations}
\begin{align}
    & \label{eqFRi9ji}
    M
    =
    \{
       {e}(u),
        \ u \in H^1(\Omega; \mathbb{R}^{n}),
        \ u = 0 \text{ on } \Gamma_D
    \} ,
    \\ & \label{tl9pHi}
    N
    =
    \{
        \sigma \in L^2(\Omega;\mathbb{R}^{{n}\times {n}}_{\rm sym}) ,
        \ {\rm div} \, \sigma = 0 ,
        \ \sigma \nu = 0 \text{ on } \Gamma_N
    \} .
\end{align}
\end{subequations}
Then, $M$ and $N$ are strongly (and weakly) closed in $L^2(\Omega,\mathbb{R}^{{n}\times {n}}_{\rm sym})$, $L^2(\Omega;\mathbb{R}^{{n}\times {n}}_{\rm sym}) = M \oplus N$ and the decomposition is orthogonal.
\end{lem}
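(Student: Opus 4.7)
The plan is to prove (i) orthogonality $M \perp N$, (ii) strong closedness of both $M$ and $N$ (which automatically upgrades to weak closedness by Mazur, since they are linear subspaces), and (iii) the identification $M^\perp = N$; combining these gives the orthogonal direct-sum decomposition. For (i), I take $e(u) \in M$ and $\sigma \in N$, use symmetry of $\sigma$ to replace $\sigma\cdot\nabla u$ by $\sigma\cdot e(u)$, and apply (\ref{eqpartintegrh12hm12}) to obtain
\begin{equation*}
\int_\Omega \sigma \cdot e(u)\, dx = -\int_\Omega \Div\sigma \cdot u\, dx + \duality{\sigma\nu}{u}.
\end{equation*}
The bulk term vanishes because $\Div\sigma = 0$, and the duality pairing vanishes because $u$ has trace zero on $\Gamma_D$ while $\sigma\nu$ is zero on $\Gamma_N$, the two pieces together covering $\partial\Omega$ up to an $\calH^{n-1}$-null set by (\ref{eqassGammaDN}).

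For (ii), closedness of $N$ is routine: strong $L^2$-convergence of $(\sigma_n)$ with identically vanishing divergence implies convergence in the graph space $E\OmegaRkn$, and continuity of $T_\nu$ carries the Neumann condition to the limit. Closedness of $M$ is the analytic heart and rests on the Korn–Poincar\'e inequality: because $\Gamma_D$ is a nonempty open subset of $\partial\Omega$ satisfying (\ref{eqassGammaDN}), there exists a constant $C$ with $\|u\|_{H^1} \le C \|e(u)\|_{L^2}$ for every $u \in H^1\OmegaRk$ that vanishes on $\Gamma_D$. A Cauchy sequence $(e(u_n))$ in $M$ therefore comes from an $H^1$-bounded sequence $(u_n)$; extracting a weakly convergent subsequence with limit $u$, weak continuity of $e$ and of $T_0$ give $e(u) = \lim e(u_n)$ and $u = 0$ on $\Gamma_D$, so the limit lies in $M$.

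For (iii), orthogonality already yields $N \subseteq M^\perp$, so the remaining content is $M^\perp \subseteq N$. Given $\tau \in M^\perp$, testing with $u \in C^\infty_c(\Omega;\mathbb{R}^n)$ and using symmetry of $\tau$ shows $\Div\tau = 0$ distributionally, hence $\tau \in E\OmegaRkn$. Then (\ref{eqpartintegrh12hm12}) reduces the orthogonality relation to $\duality{\tau\nu}{u} = 0$ for every $u \in H^1\OmegaRk$ with $u = 0$ on $\Gamma_D$; since $T_0$ sends $\{u \in H^1\OmegaRk : u = 0 \text{ on } \Gamma_D\}$ onto $\{\psi \in H^{1/2}\randOmegaRk : \psi = 0 \text{ on } \Gamma_D\}$, this encodes precisely $\tau\nu = 0$ on $\Gamma_N$ in the weak sense that defines $N$. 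Combined with the closedness of $M$, this yields $L^2 = M \oplus M^\perp = M \oplus N$. The main obstacle throughout is the Korn–Poincar\'e estimate with mixed Dirichlet–Neumann data, which is where the hypothesis $\Gamma_D \neq \emptyset$ does the essential work; the remaining steps amount to a careful use of the trace and integration-by-parts machinery recalled before the lemma.
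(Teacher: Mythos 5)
Your proposal is correct and follows essentially the same route as the paper: orthogonality via the integration-by-parts identity (\ref{eqpartintegrh12hm12}), closedness of $N$ from continuity of $\mathrm{div}$ and of the normal trace, and identification of $M^\perp$ with $N$ by first testing against $C^\infty_c$ fields to get $\Div\tau=0$ and then reading off the Neumann condition from the boundary pairing. The only differences are cosmetic: you spell out the Korn--Poincar\'e argument for the closedness of $M$, which the paper dismisses as ``easily verified,'' and you conclude via $L^2 = M \oplus M^\perp$ with $M^\perp = N$ rather than the paper's equivalent step of showing $(M\oplus N)^\perp = N \cap N^\perp = \{0\}$.
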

We recall that $\sigma \nu = 0 \text{ on } \Gamma_N$ means that the function $\sigma\nu\in H^{-1/2}(\partial\Omega;\R^n)$ obeys $\duality{\sigma\nu}{g}=0$ for all $g\in H^{1/2}(\partial\Omega;\R^n)$ that vanish $\calH^{n-1}$-almost everywhere on $\partial\Omega\setminus\Gamma_N$ (this means, \ie on $\Gamma_D$).

\begin{proof}
The mapping ${\rm div} : L^2(\Omega;\mathbb{R}^{{n}\times {n}}_{\rm sym})$ $\to$ $H^{-1}(\Omega;\mathbb{R}^{n})$ is strongly continuous. Hence, $L^2_{\rm div}(\Omega,\mathbb{R}^{{n}\times {n}}_{\rm sym}) =$ preimage of $\{0\}$ by ${\rm div}$ is strongly closed in $L^2(\Omega;\mathbb{R}^{{n}\times {n}}_{\rm sym})$. The mapping $\sigma \mapsto \sigma \nu$ is strongly continuous from $L^2_{\rm div}(\Omega;\mathbb{R}^{{n}\times {n}}_{\rm sym})$ to $H^{-1/2}(\partial\Omega:\mathbb{R}^n)$. Therefore, if $\sigma_j\in N$ converges in $L^2$ to $\sigma$, then for any $g\in H^{1/2}(\partial\Omega;\R^n)$ with $g=0$ \ae on $\Gamma_D$, we obtain
\begin{equation}
    \duality{\sigma\nu}{g}=\lim_{j\to\infty}\duality{\sigma_j\nu}{g}=0.
\end{equation}
Therefore $N$ is closed. It is easily verified that $M$ is also closed.

Let $ {e}(u) \in M$ and $\sigma \in N$. Then, (\ref{eqpartintegrh12hm12})  gives
\begin{equation}
    \int_\Omega \sigma \cdot {e}(u) \, dx
    =
   \duality{\sigma\nu}{u}
    =
    0 ,
\end{equation}
which proves orthogonality.

Let $\sigma \in (M \oplus N)^\perp$. Then $\sigma \in M^\perp$ and $\sigma \in N^\perp$. For any $u^*\in C^\infty_c(\Omega,\R^n)$ we have $e(u^*)\in M$ and by the
 assumption $\sigma \in M^\perp$
 we obtain $\Div \sigma=0$ in $\Omega$. Therefore $\sigma\nu\in H^{-1/2}(\partial\Omega;\R^n)$ and
\begin{equation}
    0 =
    \int_\Omega \sigma \cdot {e}(u) \, dx
    =
    \duality{\sigma\nu}{u}
\end{equation}
for all $u \in H^1(\Omega; \mathbb{R}^{n})$, $u = 0$ on $\Gamma_D$. This means that $\sigma\nu=0$ on $\Gamma_N$ and therefore $\sigma \in N$. By the second inclusion, $\sigma \in N \cap N^\perp = \{0\}$. It thus follows that $L^2(\Omega;\mathbb{R}^{{n}\times {n}}_{\rm sym}) = M \oplus N$ and  $M = N^\perp$.
\end{proof}

\begin{thm}[Existence and uniqueness]\label{1Lublu}
Let $\Omega \subset \mathbb{R}^{{n}}$ be open, bounded, Lipschitz. Let $\Gamma_D$, $\Gamma_N$ be disjoint open subsets of $\partial\Omega$ that obey (\ref{eqassGammaDN}). Assume:

\begin{itemize}
\item[i)] $\mathbb{C} \in L(\mathbb{R}^{{n}\times {n}}_{\rm sym})$, $\mathbb{C}^T = \mathbb{C}$, $\mathbb{C} > 0$.
\item[ii)] $f \in L^2(\Omega;\mathbb{R}^{{n}})$, $g \in H^{1/2}(\partial\Omega;\R^n)$, $h \in H^{-1/2}(\partial\Omega;\R^n)$.
\end{itemize}
Let ${\setD}$ be as in (\ref{thiUw4}) and the constraint set ${\setE}$ as in (\ref{sIes1A}) and (\ref{fRoa1l}). Then, the Data-Driven problem
\begin{equation}\label{Pou8rO}
    \min\{ d(z,{\setD}),\ z \in {\setE} \}
\end{equation}
has a unique solution. Moreover, the Data-Driven solution satisfies
\begin{equation}\label{s7lEPr}
    \sigma = \mathbb{C} \epsilon .
\end{equation}
\end{thm}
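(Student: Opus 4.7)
The plan is to reduce the Data-Driven problem to the classical mixed boundary-value problem of linear elastostatics, and then invoke Lax-Milgram.

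First, I would observe from (\ref{Y6ebRi}) and the positive-definiteness of $\C^{-1}$ that $d(z,\setD)\ge 0$ for every $z\in Z$, with $d(z,\setD)=0$ if and only if $\sigma=\C\epsilon$ almost everywhere in $\Omega$. Consequently, the infimum of $d(\cdot,\setD)$ over $\setE$ equals zero precisely when $\setE\cap\setD\ne\emptyset$, in which case the set of minimizers coincides with $\setE\cap\setD$. The theorem therefore reduces to showing that $\setE\cap\setD$ contains exactly one element, and (\ref{s7lEPr}) holds automatically at that point.

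Next I would identify an element of $\setE\cap\setD$ with a classical solution: a displacement $u\in H^1(\Omega;\R^n)$ with $T_0 u=g$ on $\Gamma_D$ such that $\sigma:=\C e(u)$ satisfies $\Div\sigma+f=0$ in $\Omega$ and $T_\nu\sigma=h$ on $\Gamma_N$. Using that $T_0$ is surjective onto $H^{1/2}(\partial\Omega;\R^n)$, I fix a lift $u_g\in H^1(\Omega;\R^n)$ with $T_0 u_g=g$, and seek $v$ in the closed subspace $H^1_{\Gamma_D}:=\{v\in H^1(\Omega;\R^n):T_0 v=0\text{ on }\Gamma_D\}$ satisfying the weak equation
\begin{equation*}
    \int_\Omega \C e(v)\cdot e(\phi)\,dx = \int_\Omega f\cdot\phi\,dx + \duality{h}{\phi} - \int_\Omega \C e(u_g)\cdot e(\phi)\,dx
\end{equation*}
for every $\phi\in H^1_{\Gamma_D}$. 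The bilinear form $a(v,\phi):=\int_\Omega\C e(v)\cdot e(\phi)\,dx$ is continuous on $H^1_{\Gamma_D}$ and, because $\Gamma_D$ is a nonempty open subset of $\partial\Omega$ (hence has positive surface measure) and $\C>0$, it is coercive by Korn's inequality with Dirichlet data. The right-hand side defines a continuous linear functional on $H^1_{\Gamma_D}$ by the assumptions on $f,g,h$ together with the continuity of $T_0$ recalled in the setting. Lax-Milgram then produces a unique $v$; setting $u=u_g+v$ yields a unique pair $(e(u),\C e(u))\in\setE\cap\setD$.

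The only mildly delicate step is verifying Korn's inequality on $H^1_{\Gamma_D}$ for the admissible class of $\Gamma_D$ satisfying (\ref{eqassGammaDN}); this is classical but worth recording. Once it is in place, existence and uniqueness in $\setE\cap\setD$ follow. Because every minimizer $z\in\setE$ must have $d(z,\setD)=0$, it satisfies $\sigma=\C\epsilon$ and therefore lies in $\setE\cap\setD$, which by the above is a singleton. This simultaneously yields existence, uniqueness, and the identity (\ref{s7lEPr}).
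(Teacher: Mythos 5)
Your proof is correct, but it takes a genuinely different route from the paper's. You exploit the special feature of the linear-elastic data set that the classical solution exists: you show $\setD\cap\setE\neq\emptyset$ directly by solving the mixed boundary-value problem with Lax--Milgram and Korn's inequality, so the minimum value of $d(\cdot,\setD)$ on $\setE$ is zero and, by (\ref{Y6ebRi}), the minimizer set is exactly the singleton $\setD\cap\setE$; existence, uniqueness and (\ref{s7lEPr}) all drop out at once. The paper instead runs the direct method intrinsically on the Data-Driven functional: weak closedness of $\setE$ via Lemma~\ref{B4Apia}, coercivity of $d^2(\cdot,\setD)$ on $\setE$ via the power identity (\ref{GIe2oa}) (this is the transversality estimate), weak lower semicontinuity, and then the first-order optimality condition combined with the orthogonal decomposition $L^2=M\oplus N$ to deduce $\sigma=\C\epsilon$ a posteriori. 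Your argument is shorter and more elementary for this particular theorem, but it hinges on the minimum being zero, which is exactly what fails for the non-graph data sets treated later in the paper; the paper's machinery (equi-transversality, coercivity on $\setE$, the Helmholtz decomposition) is built to survive that generalization, and the derivation of $\sigma=\C\epsilon$ from optimality rather than by construction is the conceptual content of ``recovering the classical solution.'' One small point to tighten: your phrase that the infimum is zero ``precisely when'' $\setE\cap\setD\neq\emptyset$ is not needed in that strength (the ``only if'' direction presupposes attainment); you only use that a nonempty intersection forces the minimum to be zero and attained there, which is what your Lax--Milgram step delivers.
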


\begin{proof}
By Lemma~\ref{B4Apia} and Hahn-Banach, the constraint set ${\setE}$ is weakly closed in $Z$. In addition, we note from (\ref{Y6ebRi}) that $d^2(\cdot,{\setD})$ is weakly lower-semicontinuous in ${Z}$, hence in ${\setE}$.
Let $(z_h) \subset {\setE}$, $d(z_h,{\setD}) \leq C < \infty$. From (\ref{Y6ebRi}),
\begin{equation}\label{youd3A}
    d^2(z_h,{\setD})
    =
    \int_\Omega
        \frac{1}{4} \,
        \mathbb{C}^{-1}
        (\sigma_h(x) - \mathbb{C} \epsilon_h(x))
        \cdot
        (\sigma_h(x) - \mathbb{C} \epsilon_h(x))
    \, dx .
\end{equation}
Expanding the square and integrating by parts using the power identity (\ref{GIe2oa}), we obtain
\begin{equation}\label{prlEj7}
\begin{split}
    &
    \| z_h \|^2
    =
    2d^2(z_h,{\setD})
    +
    \int_\Omega
        f(x) \cdot u_h(x)
    \, dx
    + \duality{\sigma\nu}{u}.
\end{split}
\end{equation}
An appeal to the Korn, Poincar\'e and trace theorems gives the estimates
\begin{equation}
 \|u_h\|_{H^1(\Omega;\R^n)} \le C(\|\epsilon_h\|_{L^2(\Omega;\R^{n\times n}_\sym)} + \|g\|_{H^{1/2}(\partial\Omega;\R^n)}),
\end{equation}
\begin{equation}
 \|u_h-g\|_{H^{1/2}(\partial\Omega;\R^n)} \le C(\|\epsilon_h\|_{L^2(\Omega;\R^{n\times n}_\sym)} + \|g\|_{H^{1/2}(\partial\Omega;\R^n)}),
\end{equation}
and
\begin{equation}
 \|\sigma_h\nu\|_{H^{-1/2}(\partial\Omega;\R^n)} \le C(\|\sigma_h\|_{L^2(\Omega;\R^{n\times n}_\sym)} + \|f\|_{L^{2}(\Omega;\R^n)}).
\end{equation}
Therefore, writing $\dualitysh{\sigma \nu}{u}=\dualitysh{\sigma\nu}{g}+\dualitysh{\sigma\nu}{u-g}$,
\begin{equation}\label{SpleD9}
\begin{split}
    \| z_h \|^2
    \leq&
    2d^2(z_h,{\setD})
    +
    C
    \| f \|_{L^{2}(\Omega;\mathbb{R}^{{n}})}
   ( \| \epsilon_h \|_{L^2(\Omega;\mathbb{R}^{{n}})}+ \|g\|_{H^{1/2}(\partial\Omega;\R^n)})
     \\ &
   + C
    \| g \|_{H^{1/2}(\partial\Omega;\mathbb{R}^{{n}})}
    (\| \sigma_h \|_{L^2(\Omega;\mathbb{R}^{{n}})}+\|f\|_{L^{2}(\Omega;\R^n)})\\ &
    +
    C
    \| h \|_{H^{-1/2}(\partial\Omega;\mathbb{R}^{{n}})}
    (\| \epsilon_h \|_{L^2(\Omega;\mathbb{R}^{{n}})}+ \|g\|_{H^{1/2}(\partial\Omega;\R^n)}) ,
\end{split}
\end{equation}
and
\begin{equation}\label{T8iuhL}
    \| z_h \|^2
    \leq
2    d^2(z_h,{\setD})
    +
    C \| z_h \| .
\end{equation}
Therefore, $\| z_h \|$ is bounded in ${Z}$ and $(z_h)$ has a weakly-convergent subsequence. The existence of Data-Driven solutions then follows from Tonelli's theorem \cite{Tonelli1921}.

Let $z = (\epsilon,\sigma)$ be a Data-Driven solution. By minimality,
\begin{equation}\label{eqminimialitysigsa}
    \int_\Omega
        \frac{1}{4} \,
        \mathbb{C}^{-1}
        (\sigma(x) - \mathbb{C} \epsilon(x))
        \cdot
        (\sigma'(x) - \mathbb{C} \epsilon'(x))
    \, dx
    =
    0 ,
\end{equation}
for all $z' = (\epsilon',\sigma') \in {\setE}_0=M\times N \equiv $ the constraint set for homogeneous data. The sets $M$ and $N$ were defined in  Lemma~\ref{B4Apia}.
Choosing $\epsilon'=0$ and $\sigma'\in N$, from Lemma~\ref{B4Apia} we obtain
\begin{equation}
 \C^{-1}\sigma-\epsilon\in N^\perp=M ,
\end{equation}
which means that there is $u\in H^1(\Omega;\R^n)$ such that
\begin{subequations}
\begin{align}
    & \label{4leplE}
    \mathbb{C}^{-1} \sigma-\epsilon
    =e(u) ,
    &
    \text{in } \Omega ,
    \\ &
    u(x) = 0 ,
    &
    \text{on } \Gamma_D .
\end{align}
\end{subequations}
 Choosing $\sigma'=0$ and $\epsilon'\in M$ in (\ref{eqminimialitysigsa}), we obtain
$\sigma-\C\epsilon\in M^\perp=N$. Since
$\sigma-\C\epsilon=\C e(u)$,
\begin{subequations}
\begin{align}
    &
 \Div(\sigma-\C\epsilon)= \Div(\C e(u))=0
    &
    \text{in } \Omega ,
    \\ &
    u = 0 ,
    &
    \text{on } \Gamma_D ,
    \\ &
    \mathbb{C} e(u) \cdot \nu= 0 ,
    &
    \text{on } \Gamma_N .
\end{align}
\end{subequations}
This implies
\begin{equation}
    \int_\Omega \C e(u) \cdot e(u) dx = \duality{\C e(u)\nu}{u}=0 ,
\end{equation}
which requires $u = 0$ and (\ref{s7lEPr}) follows from (\ref{4leplE}). Let now $z'$ and $z''$ be two Data-Driven solutions. Then, by linearity $z'-z''\in \setD\cap \setE_0$ is a Data-Driven solution with zero forcing. By identity (\ref{prlEj7}),
\begin{equation}
    \| z' - z'' \|^2
    =
    2d^2(z'-z'',{\setD})
    =
    0 ,
\end{equation}
and $z'=z''$ in ${Z}$.
\end{proof}

We note that, by (\ref{s7lEPr}), the unique Data-Driven solution coincides with the classical solution of linear elasticity. In particular, the Data-Driven solution does not change if the norm (\ref{nIus4o}) is replaced by an equivalent one, as for example the $L^2$ norm of $z$. We also note that the distance $d(\cdot,{\setD})$, eq.~(\ref{Y6ebRi}), does not control the norm of ${Z}$, which compounds the issue of coercivity. However, on the constraint set ${\setE}$ we have the power identity (\ref{GIe2oa}), which in turn yields (\ref{youd3A}) and the requisite coercivity of $d(\cdot,{\setD})$ on ${\setE}$.

\subsection{Approximation of the material data set}

Next, we consider a sequence $(\setD_h)$ of material data sets in $Z$, cf.~Fig.~\ref{3rlASw}b, and endeavor to ascertain conditions under which the solutions of the $\setD_h$-problems converge to solutions of a certain $\setD$-problem for a limiting material data set $\setD$. In order to set the general framework, let the phase space space $Z$ be a metric space and let $\setD \subset Z$ be a material data set and ${\setE} \in Z$ a constraint set. The corresponding Data-Driven problem then consists of finding
\begin{equation}\label{FroaW1}
    {\rm argmin} \, \{ d^2(z,\setD), \ z \in {\setE} \} ,
\end{equation}
or, equivalently,
\begin{equation}\label{sTieM2}
    {\rm argmin} \, \{ I_{\setE}(z) + d^2(z,\setD), \ z \in Z \} ,
\end{equation}
where
\begin{equation}
    I_\setE(x) =
    \left\{
        \begin{array}{ll}
            0, & \text{if } x\in \setE, \\
            \infty, & \text{otherwise} ,
        \end{array}
    \right.
\end{equation}
is the indicator function of $\setE \subset X$.

In order to consider approximation of material data sets we need different concepts of convergence of sets.
We specifically consider functionals with values in $$\overline \R = \R \cup\{  \infty\}.$$.

\def\calN{\mathcal{N}}
\begin{definition}[$\Gamma$-convergence]
Let $(X,\tau)$ be a topological space, $F_h:X  \to \overline \R$   a sequence of functionals. We say that $F: X \to  \overline \R$ is the $\Gamma(\tau)$-limit of $F_h$ if
\begin{equation}\label{eqdefgammac}
    F(x)
    =
    \sup_{U\in\calN(x)} \liminf_{h\to\infty} \inf_{y\in U} F_h(y)
    =
    \sup_{U\in\calN(x)} \limsup_{h\to\infty} \inf_{y\in U} F_h(y)
\end{equation}
for all $x\in X$, where $\calN(x)$ denotes the family of all open sets in $X$ that contain $x$.
\end{definition}

We recall  that every $\Gamma$-limit is lower semicontinuous with respect to $\tau$, see \cite{WD080}, Prop. 6.8.
We also remark that if $(X,\tau)$ is first countable $\Gamma$-convergence can be defined in terms of sequences.\footnote{We recall that a topological space is first countable if every point has a countable basis of neighbourhoods, which is in particular true for metric spaces.} Indeed, in this case (\ref{eqdefgammac}) is equivalent to the following two conditions
 (see \cite{WD080}, Prop. 8.1):
\begin{enumerate}
\item[i)] (liminf condition): For all $x\in X$ and all sequences $x_h\to x$,
\begin{equation}\label{eqdefGammaclb}
    F(x)\le \liminf_{h\to\infty} F_h(x_h) .
\end{equation}
\item[ii)] (limsup condition): For any $x\in X$ there is a sequence $x_h\to x$ such that
\begin{equation}\label{eqdefGammacub}
    \limsup_{h\to\infty} F_h(x_h)\le F(x) .
\end{equation}
\end{enumerate}

We recall that the weak topology of a reflexive, separable Banach space is metrizable on bounded sets. Indeed, if $T_i \in X^*$ is a countable dense set of elements of $X^*$,  then the metric defined
by $d(x,y) = \sum_{i=0}^\infty  2^{-i}  \| T_i (x-y)\|/ (1 + \| T_i(x-y)\|)$ metrizes the weak topology of $X$.
Thus  $\Gamma$ convergence with respect to the weak topology can be characterized by means of sequences
 if the functionals $(F_h)$ are equicoercive, see \cite{WD080}, Prop. 8.10. The same argument gives the following result.

\begin{lemma} \label{le:Gamma_sequence}
Let $(X,\tau)$ be a topological space and assume that there exists a metric  $d$ on $X$ such that on bounded sets the topology  $\tau_d$ defined by $d$ agrees with $\tau$. Assume that
\begin{equation}
    \hbox{$X_C=\{x\in X: F_h(x)<C \text{ for some } h\}$ is bounded for each $C \in \R$.}
\end{equation}
Then, conditions i) and ii)  are equivalent to $F = \Gamma_\tau-\lim_{h \to \infty} F_h$.
\end{lemma}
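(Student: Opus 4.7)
The plan is to prove the equivalence in both directions, splitting each one into an easy inequality and a harder one. The central mechanism throughout is that equicoercivity forces all near-optimal configurations into a bounded sublevel set $X_C$, on which, by hypothesis, the countable collection of balls $\{B_d(x,1/k)\}_k$ furnishes a cofinal family of $\tau$-neighborhoods of $x$.

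\textbf{From $\Gamma_\tau$-convergence to (i) and (ii).} Condition (i) is immediate and requires neither the metric nor coercivity: for $x_h \to x$ in $\tau$ and any open $U \ni x$, eventually $x_h \in U$, so $\inf_{y \in U} F_h(y) \le F_h(x_h)$; passing to $\liminf_h$ and then to the supremum over $U \in \calN(x)$ in (\ref{eqdefgammac}) gives $F(x) \le \liminf_h F_h(x_h)$. For (ii), if $F(x) = +\infty$ take $x_h = x$; otherwise fix $C > F(x)$. Whenever $\inf_{y \in U} F_h(y) < C$, a standard attainment argument shows that this infimum coincides with $\inf_{y \in U \cap X_C} F_h(y)$. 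On the bounded set $X_C \cup \{x\}$ the two topologies agree, so every ball $B_d(x,1/k) \cap X_C$ has the form $U_k \cap X_C$ for some $\tau$-open $U_k \ni x$. Applying (\ref{eqdefgammac}) to each $U_k$ furnishes an index $H_k$ and points $y_h^k \in B_d(x,1/k) \cap X_C$ with $F_h(y_h^k) \le F(x) + 2/k$ for $h \ge H_k$. Concatenating via $x_h = y_h^k$ for $h \in [H_k, H_{k+1})$ (and any fixed point for small $h$) produces $x_h \to x$ in $\tau_d|_{X_C \cup \{x\}} = \tau|_{X_C \cup \{x\}}$, hence in $\tau$, with $\limsup_h F_h(x_h) \le F(x)$.

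\textbf{From (i) and (ii) to $\Gamma_\tau$-convergence.} Write $L(x)$ and $M(x)$ for the left- and right-hand suprema in (\ref{eqdefgammac}); we have $L(x) \le M(x)$ trivially and must show $F(x) \le L(x)$ and $M(x) \le F(x)$. The bound $M(x) \le F(x)$ follows directly from a recovery sequence $x_h$ as in (ii): for every open $U \ni x$, $\limsup_h \inf_{y \in U} F_h(y) \le \limsup_h F_h(x_h) \le F(x)$. The nontrivial bound $F(x) \le L(x)$ is argued by contradiction. If $L(x) < F(x)$, pick $C < C' < F(x)$ with $L(x) \le C$ and, using that $\tau$ and $\tau_d$ agree on the bounded set $X_{C'} \cup \{x\}$, write $B_d(x,1/k) \cap X_{C'} = U_k \cap X_{C'}$ with $U_k \in \calN(x)$. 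From $\liminf_h \inf_{y \in U_k} F_h(y) \le L(x) \le C < C'$ and the same attainment argument, a diagonal extraction produces indices $h_k \to \infty$ and points $y_k \in B_d(x,1/k) \cap X_{C'}$ with $F_{h_k}(y_k) \le C + 1/k$. Completing these to a full sequence by $x_h = x$ for $h \notin \{h_k\}$ yields $x_h \to x$ in $\tau$ with $\liminf_h F_h(x_h) \le C < F(x)$, contradicting (i).

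\textbf{Main obstacle.} The delicate point is that the definition of $\Gamma$-convergence quantifies over the possibly uncountable neighborhood filter of $x$ in $\tau$, whereas (i) and (ii) are sequential statements. Both nontrivial halves of the argument rely on a diagonal construction that demands a countable cofinal family of neighborhoods, and the hypothesis that $\tau$ coincides with $\tau_d$ on bounded sets, combined with equicoercivity confining the relevant points to $X_C$, supplies exactly such a family via the balls $B_d(x,1/k) \cap X_C$.
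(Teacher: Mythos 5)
Your proof is correct, and it is exactly the standard argument that the paper itself does not write out but invokes by reference to Dal Maso's Prop.\ 8.10: equicoercivity confines near-optimal points to the bounded sublevel set $X_C$, where the agreement of $\tau$ with the metric topology supplies the countable cofinal family of neighborhoods $B_d(x,1/k)\cap(X_C\cup\{x\})$ needed for the diagonal constructions. The only cosmetic point is that $C$ should be taken larger than $F(x)+2$ (or the construction started at large $k$) so that $F_h(y_h^k)\le F(x)+2/k<C$ indeed forces $y_h^k\in X_C$ for every $k$ used.
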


\begin{definition}[Mosco convergence of functions]
A sequence $(F_h)$ of functions from a Banach space $X$ to $\overline{\mathbb{R}}$
 converges to another function $F : X \to \overline{\mathbb{R}}$ in the sense of Mosco, or $F = M{-}\lim_{h\to\infty}F_h$, if
\begin{itemize}
\item[i)] For every sequence $(x_h)$ converging weakly to $x$ in $X$,
\begin{equation}
    \liminf_{h\to\infty} F_h(x_h) \geq F(x) .
\end{equation}
\item[ii)] For every $x \in X$, there is a sequence $(x_h)$ converging strongly to $x$ in $X$ such that
\begin{equation}
    \lim_{h\to\infty} F_h(x_h) = F(x) .
\end{equation}
\end{itemize}
\end{definition}

\begin{definition}[Mosco convergence of sets]
A sequence $(\setE_h)$ of subsets of a Banach space $X$ converges to $\setE \subset X$ in the sense of Mosco, or $\setE = M{-}\lim_{h\to\infty}\setE_h$, if
\begin{equation}
    I_\setE = M{-}\lim_{h\to\infty} I_{\setE_h} .
\end{equation}
\end{definition}

\begin{lemma} \label{le:mosco_swlsc}
Every Mosco limit functional $F$ is weakly sequentially lower semicontinuous. In particular every Mosco limit set $\setE$ is weakly sequentially closed and $\setE =  M{-}\lim_{h\to\infty}\setE$ if and only if $\setE$ is weakly  sequentially closed.
\end{lemma}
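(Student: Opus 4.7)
The first assertion is the core of the lemma: every Mosco limit $F$ is weakly sequentially lower semicontinuous. The plan is a standard diagonal extraction. Let $x_n \weakto x$ in $X$; we must show $F(x) \le \liminf_n F(x_n)$. If $\liminf_n F(x_n) = +\infty$ there is nothing to prove, so after extracting a subsequence (not relabelled) we may assume $\lim_n F(x_n) = \liminf_n F(x_n)$ is finite, in particular $F(x_n) < \infty$ for all $n$. For each fixed $n$, the Mosco limsup condition (ii) applied at $x_n$ produces a strongly converging sequence $x_{n,h} \to x_n$ with $\lim_{h\to\infty} F_h(x_{n,h}) = F(x_n)$. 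Pick $h(n) \to \infty$ so that simultaneously $\|x_{n,h(n)} - x_n\|_X < 1/n$ and $|F_{h(n)}(x_{n,h(n)}) - F(x_n)| < 1/n$. Then $x_{n,h(n)} - x_n \to 0$ strongly and $x_n \weakto x$, so $x_{n,h(n)} \weakto x$. The Mosco liminf condition (i) applied to this weakly converging sequence gives
\begin{equation*}
    F(x)
    \le
    \liminf_{n\to\infty} F_{h(n)}(x_{n,h(n)})
    =
    \lim_{n\to\infty} F(x_n)
    =
    \liminf_{n\to\infty} F(x_n),
\end{equation*}
which is the desired inequality.

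\textbf{From functionals to sets.} For a Mosco limit set $\setE = M{-}\lim_h \setE_h$ we have $I_\setE = M{-}\lim_h I_{\setE_h}$, so by the first part $I_\setE$ is weakly sequentially lower semicontinuous. If $x_n \weakto x$ with $x_n \in \setE$ for all $n$, then $I_\setE(x) \le \liminf_n I_\setE(x_n) = 0$, so $x \in \setE$; i.e.\ $\setE$ is weakly sequentially closed. This proves the ``only if'' in the characterization.

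\textbf{Converse.} Assume $\setE$ is weakly sequentially closed; we verify that the constant sequence $\setE_h \equiv \setE$ satisfies both Mosco conditions for the functional $I_\setE$. For (i), if $x_h \weakto x$, either $\liminf_h I_\setE(x_h) = +\infty$ (trivially dominates $I_\setE(x)$), or along a subsequence $x_h \in \setE$, whence $x \in \setE$ by weak sequential closedness and thus $I_\setE(x) = 0 \le \liminf_h I_\setE(x_h)$. For (ii), the constant recovery sequence $x_h = x$ converges strongly to $x$ and yields $\lim_h I_\setE(x_h) = I_\setE(x)$. Hence $I_\setE = M{-}\lim_h I_\setE$, so $\setE = M{-}\lim_h \setE$, completing the equivalence.

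\textbf{Expected obstacle.} Nothing is genuinely hard here; the only care-point is the diagonalization in the first part, where one must make sure the strong corrections $x_{n,h(n)} - x_n \to 0$ are fast enough that the diagonal still converges weakly to $x$, and simultaneously choose $h(n)$ so that the functional values $F_{h(n)}(x_{n,h(n)})$ track $F(x_n)$ well enough to transfer the $\liminf$. The possibility that $F(x_n) = \infty$ is handled by passing to a subsequence at the outset, which is the standard workaround.
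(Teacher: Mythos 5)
Your proof is correct and uses essentially the same diagonal-extraction argument as the paper: recover each $x_n$ by the limsup condition, diagonalize, and feed the resulting weakly convergent diagonal into the liminf condition. The only (cosmetic) difference is that the paper indexes the diagonal by the functional index $h$ (choosing $k_h$ for each $h$, so that condition (i) applies verbatim to a full sequence $y_h$ paired with $F_h$), whereas your diagonal is indexed by $n$ with functional index $h(n)$; to invoke condition (i) for $\liminf_n F_{h(n)}(x_{n,h(n)})$ one should take $h(n)$ strictly increasing and note that the liminf inequality persists along such subsequences of $(F_h)$ — e.g.\ fill the missing indices with $x$ itself and use that the liminf over a subsequence dominates the liminf over the full sequence — a routine remark worth adding.
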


\begin{proof}
If $F$ is a Mosco limit,  $x_k \weakto x$ and $L = \liminf_{k \to \infty} F(x_k)$ we need to show that $F(x) \le L$. Passing to a subsequence we may assume that $L =  \lim_{k \to \infty} F(x_k)$. By the $\limsup$ property there exist $x_{k,h}$ such that $\| x_{k,h} - x_k\| \le 2^{-h}$ and $\lim_{h \to \infty} d_{\overline \R}(F_h(x_{k,h}), F(x_k)) \le 2^{-h}$. Thus there exists an increasing sequence $h \mapsto k_h$ such that $y_h = x_{k_h, h}$ satisfies $\| y_h - x_{k_h}\| \le 2^{-h}$ and $d_{\overline \R}(F(y_h), L) \le 2^{-h+1}$. Hence $y_h \weakto x$ and $F(y_h) \to L$. Now the $\liminf$ property of Mosco convergence yields $F(x) \le L$. The assertions for sets follow by taking $F_h = I_{\setE_h}$.
\end{proof}

We have the following relation between $M$-convergence of material data sets and $\Gamma$-convergence of the corresponding Data-Driven functionals.

\begin{thm}\label{sTLuz7}
Let ${Z}$ be a reflexive, separable Banach space, $\setD$ and $(\setD_h)$ subsets of $Z$, ${\setE}$ a weakly  sequentially closed subset of $Z$. Suppose:
\begin{itemize}
\item[i)] (Mosco convergence) $\setD = M{-}\lim_{h\to\infty} \setD_h$ in $Z$.
\item[ii)] (Equi-transversality) There are constants $c > 0$ and $b \geq 0$ such that, for all $y \in \setD_h$ and $z \in {\setE}$,
\begin{equation}\label{m4ufrO}
\|y-z\| \geq c (\|y\| + \|z\|) - b .
\end{equation}
\end{itemize}
Then,
\begin{equation}\label{sPi5Sl}
    I_{\setE}(\cdot) + d^2(\cdot,\setD)
    =
    \Gamma{-}\lim_{h\to\infty}
    \Big( I_{\setE}(\cdot) + d^2(\cdot,\setD_h) \Big),
\end{equation}
with respect to the weak topology of $Z$.

If $(z_h)$ is a sequence of elements of $Z$ with $\sup_h I_\setE(z_h)+d^2(z_h,\setD_h)<\infty$
then there is a subsequence converging weakly to some $z\in \setE$.
\end{thm}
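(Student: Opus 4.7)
The plan is to prove the compactness statement first and then, equipped with the resulting equi-coercivity, reduce the $\Gamma$-convergence claim to its sequential characterization via Lemma~\ref{le:Gamma_sequence}. For compactness, suppose $F_h(z_h) := I_\setE(z_h) + d^2(z_h,\setD_h) \le C$ for all $h$. Then $z_h \in \setE$, and for each $h$ I choose $y_h \in \setD_h$ with $\|y_h - z_h\|^2 \le C + 1$. Equi-transversality gives
\begin{equation*}
    \sqrt{C+1} \;\ge\; \|y_h - z_h\| \;\ge\; c\bigl(\|y_h\| + \|z_h\|\bigr) - b,
\end{equation*}
so $(z_h)$ (and $(y_h)$) are norm-bounded. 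Reflexivity yields a weakly convergent subsequence $z_h \weakto z$, and weak sequential closedness of $\setE$ places $z\in\setE$. The same bound shows that $\{z\in Z : F_h(z)\le C \text{ for some } h\}$ is bounded, so Lemma~\ref{le:Gamma_sequence} applies and it suffices to verify the liminf and limsup conditions for weakly convergent sequences.

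For the limsup condition I take the constant sequence $z_h = z$. If $z\notin\setE$ there is nothing to prove. If $z\in\setE$, fix $\eta>0$ and pick $y\in\setD$ with $\|z-y\|^2 \le d^2(z,\setD)+\eta$. The Mosco limsup applied to $I_{\setD_h}\to I_\setD$ produces $y_h\to y$ strongly with $y_h\in\setD_h$ for all sufficiently large $h$, whence $d(z,\setD_h)\le\|z-y_h\|\to\|z-y\|$ and therefore $\limsup_h d^2(z,\setD_h)\le d^2(z,\setD)+\eta$. Letting $\eta\to 0$ concludes.

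The liminf inequality is the main obstacle. Let $z_h\weakto z$ with $L:=\liminf_h F_h(z_h)<\infty$; passing to a subsequence we may assume the liminf is a limit and $z_h\in\setE$ throughout, so $z\in\setE$ by weak closedness of $\setE$. Choose near-optimal projections $y_h\in\setD_h$ with $\|y_h - z_h\|^2 \le d^2(z_h,\setD_h) + 1/h$. Since $(z_h)$ and $(\|y_h-z_h\|)$ are bounded, equi-transversality forces $(y_h)$ to be bounded as well; extracting a further subsequence, $y_h\weakto y$ in $Z$. The Mosco liminf applied to $I_{\setD_h}(y_h)=0$ gives $I_\setD(y)\le 0$, so $y\in\setD$. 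Weak lower semicontinuity of the norm finally yields
\begin{equation*}
    d(z,\setD) \;\le\; \|z-y\| \;\le\; \liminf_h \|z_h - y_h\| \;=\; \sqrt{L},
\end{equation*}
that is, $I_\setE(z)+d^2(z,\setD)\le L$. The critical technical point is the transfer of weak compactness from $(z_h)$ to the projections $(y_h)$ via equi-transversality: this is precisely what makes the weak-convergence based Mosco liminf applicable and thereby produces a limit point inside $\setD$.
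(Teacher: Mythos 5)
Your proof is correct and follows essentially the same route as the paper's: equi-transversality gives equi-coercivity and hence the sequential characterization of weak $\Gamma$-convergence via Lemma~\ref{le:Gamma_sequence}, the constant sequence plus the Mosco recovery sequence handles the limsup, and near-optimal projections made weakly compact by equi-transversality handle the liminf. The only (harmless) deviation is in the limsup step, where you use $\eta$-near minimizers in $\setD$ instead of invoking weak sequential closedness of $\setD$ (Lemma~\ref{le:mosco_swlsc}) to obtain an exact projection, which is a slightly more elementary variant of the same argument.
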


\begin{proof}
a) {\sl Compactness}.
Let $F_h(z) =  I_\setE(z_h)+d^2(z_h,\setD_h)$. Then
\begin{equation}
    \sqrt{F_h(z)}
    \ge
    I_\setE(z) + \inf_{y \in \setD_h} \| z - y\|
    \ge
    \inf_{y \in \setD_h} c(\|y\| + \|z\|) -b \ge c \|z\| - b.
\end{equation}
Hence, by Lemma \ref{le:Gamma_sequence} $\Gamma$ convergence with respect to the weak topology  of $F_h$ is characterized by (\ref{eqdefGammaclb})-(\ref{eqdefGammacub}) (for weakly convergent sequences).

b) {\sl Limsup inequality}.
Let $z \in Z$ (otherwise there is nothing to show). By Lemma  \ref{le:mosco_swlsc} $\setD$ is weakly sequentially closed.
Thus there exist $y \in \setD$ such that $\| z - y\| = d(z, \setD)$. By the limsup inequality in Mosco convergence of $\setD_h$
there is a sequence $y_h\in \setD_h$ such that $y_h\to y$ strongly, which implies
\begin{equation}
\limsup_{h\to\infty}  d(z,\setD_h)\le
\limsup_{h\to\infty} \|z-y_h\| \le
\|z-y\| = d(z,\setD).
\end{equation}
Therefore the constant sequence gives the limsup inequality (\ref{eqdefGammacub}).

c) {\sl Liminf inequality}. Let $z_h \rightharpoonup z$ in $Z$.
Let
$$ L := \liminf_{h \to \infty} F_h(z_h) = \liminf_{h \to \infty} d^2(z_h, \setD_h).$$
We need to show that $F(z) \le L$.  We may assume that $L < \infty$ and passing to a subsequence (not renamed) we may assume that
$z_h \in \setE$ and  $L = \lim_{h \to \infty} d^2(z_h, \setD_h)$. There exist $y_h \in D_h$ such that
$$ L = \lim_{h \to \infty} \| z_h - y_h\|^2.$$
It follows from equi-transversality that the sequence $y_h$ is bounded. Passing to a subsequence (not renamed) we may assume that $y_h \weakto y$. By the liminf inequality in Mosco convergence of $D_h$ we get
\begin{equation}
    0
    =
    \liminf_{h\to\infty} I_{\setD_h}(y_h)
    \geq
    I_\setD(y)
    \geq
    0 .
\end{equation}
Hence, $I_\setD(y) = 0$ and $y \in \setD$. In addition, $z_h-y_h \rightharpoonup z-y$. By convexity,
\begin{equation}
L    =   \liminf_{h\to\infty} \|z_h-y_h\|^2
    \geq
\|z-y\|^2   \geq
    d^2(z,\setD) = F(z)
\end{equation}
and  (\ref{eqdefGammaclb}) is proved.
\end{proof}

We note that, by Lemma  \ref{le:mosco_swlsc}
the limiting material data set $\setD$ must be necessarily weakly
sequentially closed in $Z$. All strongly closed convex sets, including closed
linear subspaces of $Z$, satisfy this requirement.

\begin{figure} [ht]
\centering
\includegraphics[width=0.75\linewidth]{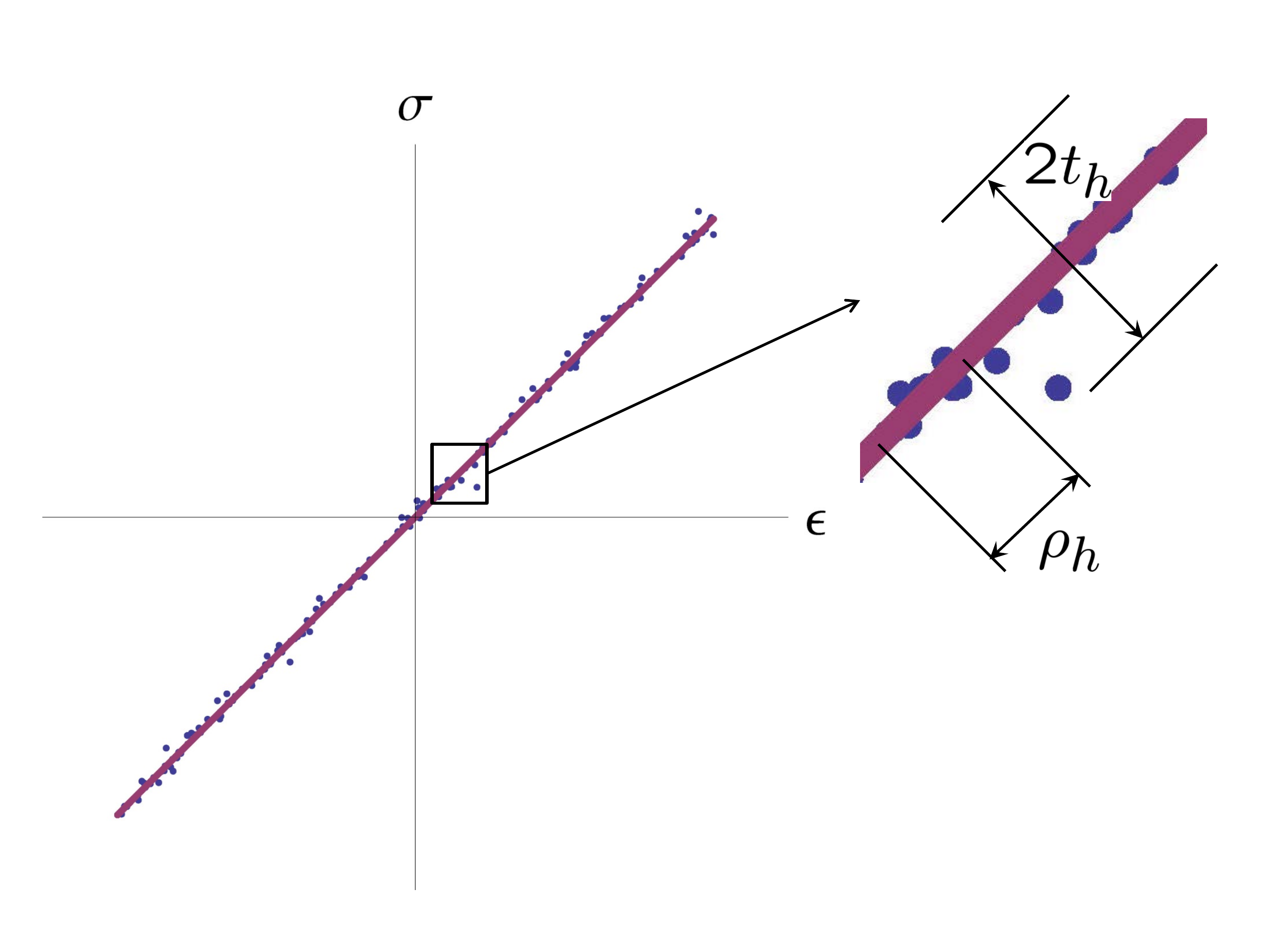}
\caption{Schematic of convergent sequence of material data sets. The parameter $t_h$ controls the spread of the material data sets away from the limiting data set and the parameter $\rho_h$ controls the density of material data point.}
\label{c7aKLe}
\end{figure}

Conditions for the existence of solutions of Data-Driven problems with weakly
sequentially  closed material data sets follow directly from Theorem~\ref{sTLuz7}.

\begin{cor}    \label{s4leBr}
Let ${Z}$ be a reflexive, separable Banach space, $\setD$ and  ${\setE}$ be weakly sequentially closed subsets of $Z$.
Suppose the transversality assumption (\ref{m4ufrO}) holds for any $y\in \setD$, $z\in \setE$. Then, the Data-Driven problem (\ref{sTieM2}) has solutions.
\end{cor}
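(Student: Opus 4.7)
The strategy is to deduce this existence result directly from Theorem~\ref{sTLuz7} by taking the constant approximating sequence $\setD_h := \setD$. The first step is to check the two hypotheses of that theorem. Mosco convergence of the constant sequence to $\setD$ is exactly the content of Lemma~\ref{le:mosco_swlsc} for weakly sequentially closed sets, so $\setD = M\text{-}\lim_{h\to\infty}\setD_h$ holds. Equi-transversality with constants $c,b$ is the given transversality assumption (\ref{m4ufrO}) applied uniformly in $h$, since $\setD_h = \setD$.

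Once the hypotheses are verified, Theorem~\ref{sTLuz7} delivers two pieces of information that I need. First, the $\Gamma$-convergence statement (\ref{sPi5Sl}) for the constant sequence of functionals says that $F(\cdot) := I_\setE(\cdot) + d^2(\cdot,\setD)$ equals its own $\Gamma$-limit with respect to the weak topology of $Z$, and hence is weakly sequentially lower semicontinuous. Second, the compactness statement asserts that any sequence $(z_h)$ along which $F$ stays bounded admits a subsequence that converges weakly to some $z \in \setE$.

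With these two ingredients I would run the direct method. If either $\setD$ or $\setE$ is empty the statement is vacuous, so assume both are nonempty; picking $z_0\in\setE$ and $y_0\in\setD$ yields $m:=\inf_{z\in Z} F(z) \leq \|z_0 - y_0\|^2 < \infty$. Choose a minimizing sequence $(z_h)\subset \setE$ with $F(z_h)\to m$. Eventually $F(z_h)\leq m+1$, so by the compactness statement of Theorem~\ref{sTLuz7} a subsequence converges weakly to some $z \in \setE$. Weak sequential lower semicontinuity of $F$ then gives $F(z)\leq \liminf_h F(z_h) = m$, and since $z\in\setE$ we also have $F(z)\geq m$. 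Therefore $z$ attains the minimum in (\ref{sTieM2}).

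I do not expect a substantive obstacle: the corollary is essentially the observation that Theorem~\ref{sTLuz7} specializes to this existence statement for a constant approximating sequence. The only conceptual point worth highlighting is the precise role of transversality: without (\ref{m4ufrO}) a minimizing sequence in $\setE$ could drift to infinity even though $d(\cdot,\setD)$ stays bounded on it, so this hypothesis is what supplies coercivity of $F$ and enables extraction of a weakly convergent subsequence, while weak sequential closedness of $\setD$ and $\setE$ supplies the lower semicontinuity needed to pass to the limit.
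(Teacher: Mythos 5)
Your proposal is correct and follows exactly the paper's argument: apply Theorem~\ref{sTLuz7} to the constant sequence $\setD_h=\setD$ (with Lemma~\ref{le:mosco_swlsc} supplying the Mosco convergence), conclude that $I_{\setE}(\cdot)+d^2(\cdot,\setD)$ is weakly lower semicontinuous and coercive, and finish by the direct method. The paper states this in two lines, citing Tonelli's theorem; your version merely spells out the same steps in more detail.
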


\begin{proof}
By Lemma  \ref{le:mosco_swlsc},
 Theorem~\ref{sTLuz7} can be applied to the constant sequence $\setD_h=\setD$, with the result that $I_{\setE}(\cdot) + d^2(\cdot,\setD)$ is weakly lower semicontinuous and coercive. Existence of solutions then follows from Tonelli's theorem \cite{Tonelli1921}.
\end{proof}

We now recover the existence Theorem~\ref{1Lublu} for the Data-Driven linear elastic problem as a special case of Theorem~\ref{sTLuz7} and Corollary~\ref{s4leBr}.

\begin{cor}\label{corlinearleastc}
Under the assumptions of Theorem~\ref{1Lublu}, the linear-elastic Data-Driven problem (\ref{Pou8rO}) has solutions.
\end{cor}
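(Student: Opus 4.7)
The plan is to derive Corollary~\ref{corlinearleastc} from the abstract existence result Corollary~\ref{s4leBr} applied to the phase space $Z$ of (\ref{eqdefZlinearelast}) equipped with the Hilbertian norm (\ref{nIus4o}). Since $Z$ is a Hilbert space it is reflexive and separable, so the abstract framework applies; what remains is to verify that $\setD$ and $\setE$ are weakly sequentially closed and that the transversality estimate (\ref{m4ufrO}) holds for every $y \in \setD$ and every $z \in \setE$.

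Weak closedness is essentially already established. The material data set $\setD = \{(\epsilon,\sigma):\sigma=\mathbb{C}\epsilon\}$ is a closed linear subspace of $Z$, and the constraint set $\setE$ was shown to be weakly closed at the outset of the proof of Theorem~\ref{1Lublu} via Lemma~\ref{B4Apia} and Hahn--Banach; equivalently, fixing any lift $z_0\in\setE$ of the data $f,g,h$, one has $\setE = z_0 + M\times N$ with $M$, $N$ strongly closed, so $\setE$ is a strongly closed affine subspace, hence weakly closed.

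The main point, and the only non-routine step, is equi-transversality. The plan here is to recycle the coercivity computation (\ref{youd3A})--(\ref{T8iuhL}) from the proof of Theorem~\ref{1Lublu}, observing that its only ingredients are the power identity (\ref{GIe2oa}) --- which holds throughout $\setE$ --- together with the Korn, Poincar\'e and trace estimates, none of which use any minimality of $z_h$. This yields an a priori bound
\begin{equation*}
\|z\|^2 \leq 2\,d^2(z,\setD) + C\,\|z\| \qquad \text{for every } z\in\setE ,
\end{equation*}
with $C$ depending only on $\mathbb{C}$, $\Omega$, $f$, $g$, $h$. For arbitrary $y\in\setD$ the estimate $d(z,\setD)\leq\|z-y\|$ combined with the resulting quadratic inequality in $\|z\|$ gives $\|z\|\leq C+\sqrt{2}\,\|z-y\|$, and the triangle inequality $\|y\|\leq\|z\|+\|z-y\|$ then produces
\begin{equation*}
\|y\|+\|z\| \leq 2C + (1+2\sqrt{2})\,\|z-y\| ,
\end{equation*}
which is exactly (\ref{m4ufrO}) with $c=1/(1+2\sqrt{2})$ and $b=2C/(1+2\sqrt{2})$. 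The only conceptual care required is to notice that (\ref{T8iuhL}), derived in the proof of Theorem~\ref{1Lublu} for a minimizing sequence, is in fact an \emph{a priori} estimate valid on all of $\setE$; once this is observed, Corollary~\ref{s4leBr} yields the claim immediately.
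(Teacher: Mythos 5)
Your proposal is correct and follows essentially the same route as the paper: both reduce to the abstract existence result by verifying weak sequential closedness of $\setD$ and $\setE$ and the transversality estimate (\ref{m4ufrO}), with transversality ultimately resting on the power identity (\ref{GIe2oa}) (equivalently, the orthogonality of $M$ and $N$ from Lemma~\ref{B4Apia}). The only difference is bookkeeping: the paper first translates to the homogeneous constraint set $\setE_0=M\times N$, where orthogonality gives the exact identity $d^2(z',\setD)=\tfrac12\|z'\|^2$, and then absorbs $z_0$ into the additive constant, whereas you work directly on $\setE$ via the a priori bound (\ref{T8iuhL}) and a quadratic inequality --- both computations are valid and yield the same conclusion.
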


\begin{proof}
It follows immediately that $\setD = {M}{-}\lim_{h\to\infty} \setD$. In addition, the intersection of $\setD$ and $\setE$ consists of one single point corresponding to the unique solution $z_0$ of the linear elasticity problem. Let $\setE_0=M\times N$ be the constraint set for null data, \ie, (\ref{s6oAzi}) with $f=0$, $g=0$ and $h=0$. We then have $\setE = z_0 + \setE_0$ for some $z_0\in \setE$. We claim that there is $c > 0$ such that
\begin{equation}\label{joa9lA}
    \| y' - z' \| \geq c ( \| y' \| + \| z' \| )
\end{equation}
or every $y' \in \setD$ and $z' \in \setE_0$.
To see this, we write $z'=(\epsilon',\sigma')$ and by (\ref{Y6ebRi}) compute
\begin{equation}
    \|y'-z'\|^2\geq d^2(z',{\setD})
    =
    \int_\Omega
        \frac{1}{4}
        \mathbb{C}^{-1}
        (\sigma' - \mathbb{C} \epsilon')
        \cdot
        (\sigma' - \mathbb{C} \epsilon')
    \, dx .
\end{equation}
By Lemma \ref{B4Apia} the fields $\epsilon'\in M$ and $\sigma'\in N$ are orthogonal, therefore
\begin{equation}
    \|y'-z'\|^2\geq d^2(z',{\setD})
    =
    \int_\Omega
        \frac{1}{4}
        \mathbb{C}^{-1}\sigma'\cdot\sigma'
        +\frac{1}{4}
        \mathbb{C}\epsilon'\cdot\epsilon'
    \, dx =\frac12 \|z'\|^2.
\end{equation}
Finally,
\begin{equation}
  \|y'-z'\|\geq
  \frac12 \|y'-z'\|+
\frac12   d(z',{\setD})
\geq
  \frac12 \|y'-z'\|+
\frac 1{2\sqrt2}   \|z'\|
\geq c ( \|y'\|+\|z'\|)
\end{equation}
proves (\ref{joa9lA}).
Let now $y\in \setD$, $z\in \setE$, and define $z'=z-z_0\in\setE_0$. Then
a triangular inequality gives $   \| z' \| \geq \| z \| - \| z_0 \|$ and
we obtain
\begin{equation}
    \| y - z \|
    \geq
    c ( \| y \| + \| z \| )
    -
     c \| z_0 \| ,
\end{equation}
which is (\ref{m4ufrO}).
\end{proof}

A particular case of interest is the following, cf.~Fig.~\ref{c7aKLe}. The case concerns approximation by sequences of material data sets that converge to a limiting material in a certain uniform sense. In practice, such approximations may result from {\sl sampling} the material behavior experimentally or through multiscale computational models.

\begin{lemma}\label{0rIuth}
Let $Z$ and $\setE$ be as in (\ref{eqdefZlinearelast}) and (\ref{s6oAzi}). Suppose that
\begin{equation}
    \setD_h = \{ z \in Z \, : \ z(x) \in \setD_{{\rm loc},h} \text{ \ae~in  } \Omega \} ,
\end{equation}
for some sequence of local material data sets $\setD_{{\rm loc},h} \subset \mathbb{R}^{{n} \times {n}}_{\rm sym} \times \mathbb{R}^{{n} \times {n}}_{\rm sym}$. Let
\begin{equation}
    \setD = \{ z \in Z \, : \ z(x) \in \setD_{{\rm loc}} \text{ \ae~in } \Omega \} ,
\end{equation}
where
\begin{equation}
    \setD_{\rm loc} = \{ (\epsilon,\sigma) \in \mathbb{R}^{{n} \times {n}}_{\rm sym} \times \mathbb{R}^{{n} \times {n}}_{\rm sym}, \ \sigma = \mathbb{C} \epsilon \} .
\end{equation}
Assume that:
\begin{itemize}
\item[i)] (Fine approximation)  There is a sequence $\rho_h \downarrow 0$ such that
\begin{equation}
    d(\xi, \setD_{{\rm loc},h}) \leq \rho_h, \qquad \forall \xi \in \setD_{\rm loc} .
\end{equation}
\item[ii)] (Uniform approximation) There is a sequence $t_h \downarrow 0$ such that
\begin{equation}
    d(\xi, \setD_{\rm loc}) \leq t_h, \qquad \forall \xi \in \setD_{{\rm loc},h} .
\end{equation}
\end{itemize}
Then, $\setD = M{-}\lim_{h\to\infty} \setD_h$ in $Z$.
\end{lemma}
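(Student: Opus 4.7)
The plan is to verify the two Mosco conditions for the indicator functionals $I_{\setD_h}$ directly: the weak liminf inequality and the existence of a strong recovery sequence. Both rely on the fact that $\setD_{\rm loc}=\{(\epsilon,\mathbb{C}\epsilon)\}$ is a linear subspace of the finite-dimensional space $\mathbb{R}^{n\times n}_{\rm sym}\times\mathbb{R}^{n\times n}_{\rm sym}$, so $\setD$ is a closed linear subspace of $Z$ and in particular weakly sequentially closed by Lemma~\ref{le:mosco_swlsc}. The norm~\eqref{nIus4o} is equivalent to the $L^2\times L^2$ norm since $\mathbb{C}>0$, which allows me to convert pointwise bounds into bounds in $Z$.

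For the recovery (limsup) sequence, I would argue pointwise using hypothesis~(i). Given $z\in\setD$, we have $z(x)\in\setD_{\rm loc}$ a.e., so the closed set $\overline{\setD_{{\rm loc},h}}\cap\overline{B}(z(x),\rho_h)$ is nonempty for a.e.\ $x$. The induced multifunction is measurable in $x$, hence by Kuratowski--Ryll-Nardzewski it admits a measurable selector, which after a negligible perturbation can be chosen in $\setD_{{\rm loc},h}$ itself with $|z_h(x)-z(x)|\le 2\rho_h$ a.e. This gives $z_h\in\setD_h$ with $\|z_h-z\|\le C\rho_h|\Omega|^{1/2}\to 0$, supplying the strong recovery sequence (for $z\notin\setD$ the condition is vacuous).

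For the liminf condition, I would consider $z_h\weakto z$ with $z_h\in\setD_h$ (other sequences give $+\infty$ and are trivial). Let $\pi$ denote the orthogonal projection of $\mathbb{R}^{n\times n}_{\rm sym}\times\mathbb{R}^{n\times n}_{\rm sym}$ onto the subspace $\setD_{\rm loc}$, and set $\tilde z_h(x):=\pi(z_h(x))$. Then $\tilde z_h\in\setD$ is measurable, and by hypothesis~(ii)
\[
|\tilde z_h(x)-z_h(x)|=d(z_h(x),\setD_{\rm loc})\le t_h
\]
for a.e.\ $x$, so $\|\tilde z_h-z_h\|\le Ct_h|\Omega|^{1/2}\to 0$ strongly in $Z$. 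Hence $\tilde z_h\weakto z$, and weak closedness of $\setD$ forces $z\in\setD$, i.e.\ $I_\setD(z)=0$.

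The one genuinely technical point is the measurable selection in the recovery sequence, as no regularity of $\setD_{{\rm loc},h}$ beyond~(i)--(ii) is assumed; passing to the closure $\overline{\setD_{{\rm loc},h}}$ makes the target multifunction closed-valued and measurable (the distance function to a closed set is Borel), after which Kuratowski--Ryll-Nardzewski applies, and the final perturbation back into $\setD_{{\rm loc},h}$ is absorbed in $\rho_h\to 0$. The remainder is a direct consequence of the linear structure of $\setD_{\rm loc}$ and the finite dimensionality of the pointwise target space.
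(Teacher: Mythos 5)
Your proof is correct and follows essentially the same route as the paper's: the recovery sequence is obtained by a pointwise measurable selection into $\setD_{{\rm loc},h}$ using (i), and the liminf inequality uses (ii) to move $z_h$ onto $\setD$ up to an $O(t_h)$ error in $Z$ and then invokes the weak closedness (equivalently, the weak lower semicontinuity of $d(\cdot,\setD)$) of the convex set $\setD$. The only difference is that you spell out the measurable-selection and pointwise-projection details that the paper states without justification.
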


\begin{proof}
a) {\sl Liminf inequality}.
Let $z_h \rightharpoonup z$ in $Z$. By passing to a subsequence, not renamed, we may assume $z_h \in \setD_h$. Then, by (ii) we have $d(z_h,\setD) \to 0$ and, by the lower-semicontinuity of $d(\cdot,\setD)$,
\begin{equation}
    0 = \liminf_{h \to \infty} d(z_h,\setD) \geq d(z,\setD).
\end{equation}
Hence $z\in \setD$ and
\begin{equation}
    0 = \liminf_{h\to 0} I_{\setD_h}(z_h) = I_{\setD}(z) = 0 .
\end{equation}

b) {\sl Limsup inequality}.
Let $z \in \setD$. By (i) we can choose a measurable function $z_h$ such that $z_h(x)\in \setD_{{\rm loc},h}\cap \overline B_{\rho_h}(z(x))$. Then, $z_h \to z$ and
\begin{equation}
    I_{\setD}(z) = 0 = \lim_{h\to 0} I_{\setD_h}(z_h),
\end{equation}
as required.
\end{proof}

A noteworthy consequence of Theorem~\ref{sTLuz7} is that, when the material set $\setD$ is weakly  sequentially closed, the convergence of approximating material-set sequences $(\setD_h)$ can be elucidated independently of the constraint set ${\setE}$. This situation is fundamentally different when the material set $\setD$ fails to be weakly closed, as shown next.

\section{Relaxation of the two-well problem in elasticity}
\label{secrelax}

\begin{figure}[ht]
	\begin{subfigure}{0.45\textwidth}\caption{} \includegraphics[width=0.99\linewidth]{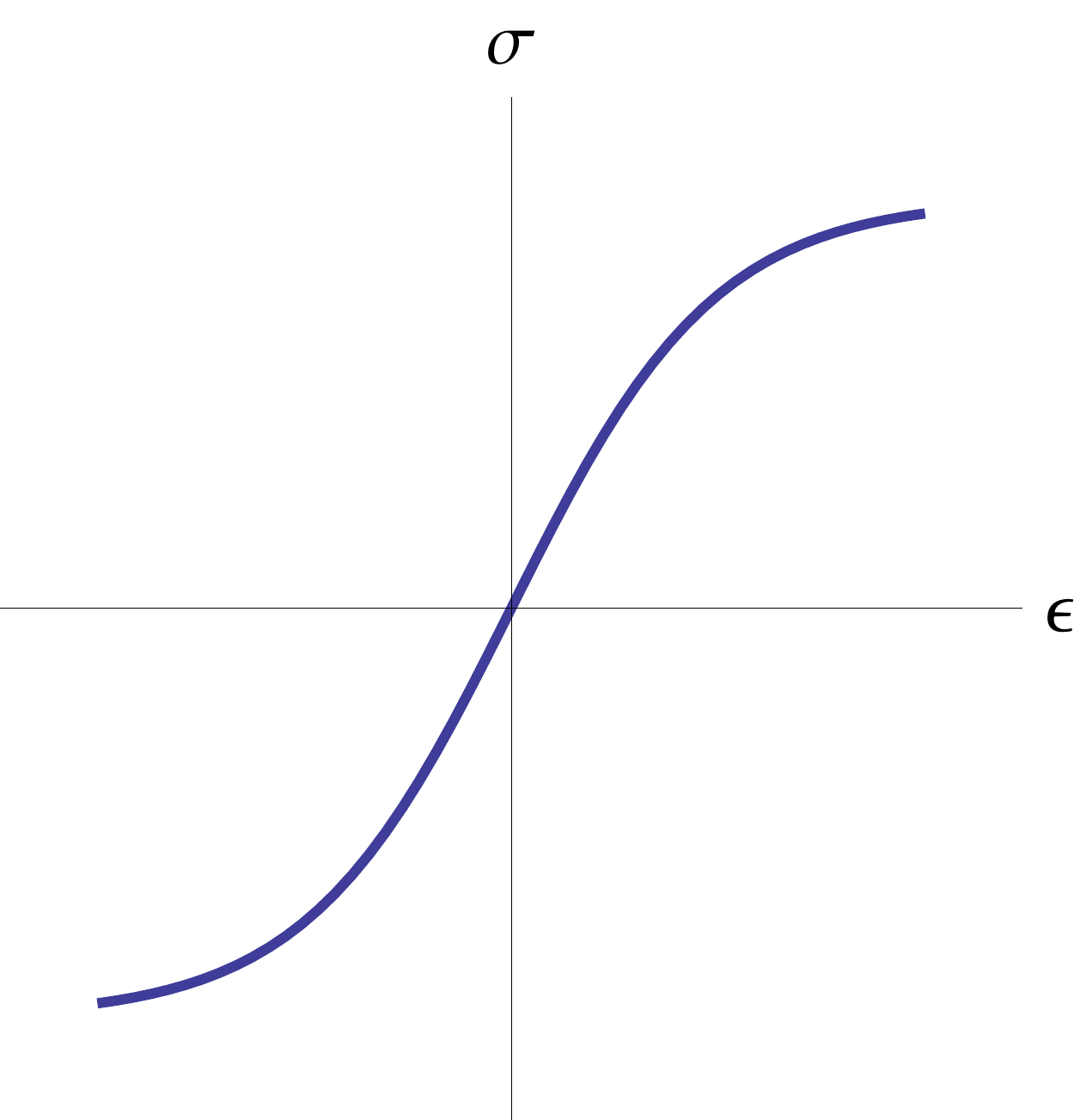}
	\end{subfigure}
    $\quad$
	\begin{subfigure}{0.45\textwidth}\caption{} \includegraphics[width=0.99\linewidth]{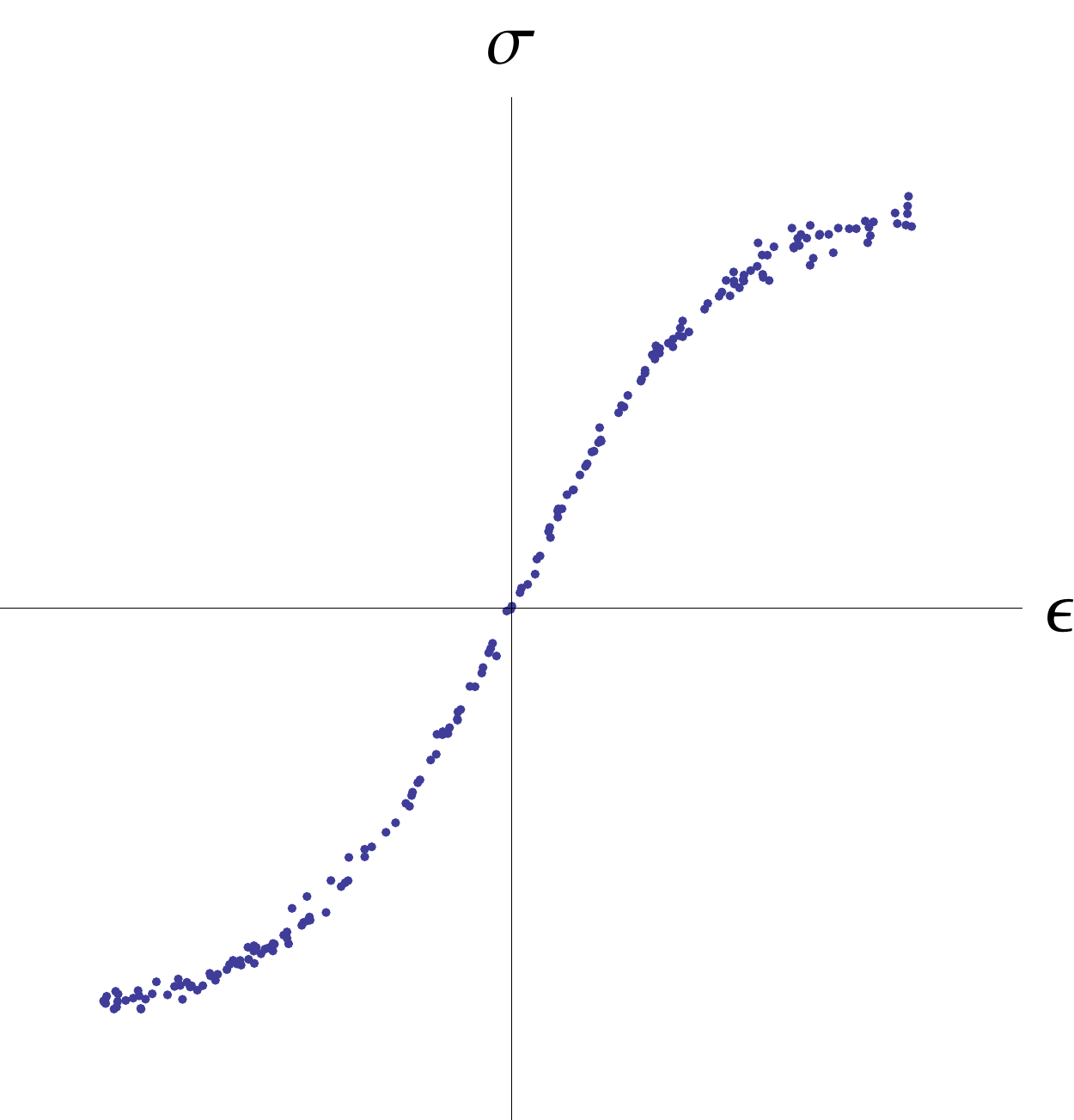}
	\end{subfigure}
	\caption{a) Local material data set for nonlinear elasticity. b) Sampled local material data set for nonlinear elasticity.} \label{4iAcro}
\end{figure}

Next, we turn attention to non-convex material sets $\setD$ failing to be weakly closed, cf.~fig.~\ref{4iAcro}. We first discuss abstractly the Data-Driven approximation and relaxation concepts, assuming that $Z$ is a reflexive separable Banach space and $\setE$ a weakly-closed subset. The corresponding Data-Driven problem is again (\ref{FroaW1}) or, equivalently, (\ref{sTieM2}). Furthermore, since
\begin{equation}
    d(z,\setD) = \inf_{y \in Z} \Big( I_\setD(y) + \|y-z\| \Big) ,
\end{equation}
an alternative definition of Data-Driven problem (\ref{sTieM2}) is
\begin{equation}\label{si4xOu}
    {\rm argmin} \,
    \{ F(y,z), \ (y,z) \in Z\times Z \} ,
\end{equation}
with
\begin{equation}\label{d3upiA}
    F(y,z) = I_\setD(y) + I_{\setE}(z) + \|y-z\|^2= I_{\setD\times {\setE}}(y,z) + \|y-z\|^2.
\end{equation}
The choice of the exponent 2 is only for definiteness and does not influence any statement.
In Section \ref{seconed} and \ref{sectwowell} we shall then turn to specific examples of multistable materials, restricting attention to linearized kinematics.

\subsection{Approximation of the material data set}

In addition to its strong $(S)$ and weak $(W)$ topologies, with corresponding convergence of sequences denoted $\to$ and $\rightharpoonup$, respectively, we endow $Z\times Z$ with the following intermediate topology $(\topoD)$. The abstract definitions can be given for any reflexive, separable Banach space $Z$.

\begin{definition}[Data convergence]
A sequence $(y_h,z_h)$ in $Z\times Z$ is said to converge to $(y,z) \in Z\times Z$ in the Data topology, denoted $(y,z) = {\topoD}{-}\lim_{h\to\infty} (y_h,z_h)$, if $y_h \rightharpoonup y$, $z_h \rightharpoonup z$ and $y_h-z_h \to y-z$.
\end{definition}

We denote by $\Gamma(\tau){-}\lim_{h\to\infty}$, $\tau = W, \topoD, S$, the $\Gamma$-limit of sequences of functions over $Z\times Z$ and by $K(\tau){-}\lim_{h\to\infty}$, $\tau = W, \topoD, S$, the Kuratowski limit of sequences of sets in $Z\times Z$. We recall that Kuratowski convergence of sets is equivalent to $\Gamma$ convergence of the indicator functions.

\newcommand\calO{\mathcal{O}}
\newcommand\calP{\mathcal{P}}
We remark that the Data convergence can be easily reformulated in terms of the weak topology. Indeed, letting $\calO_S, \calO_W\subset\calP(Z)$ be the families of open subsets with respect with the strong and the weak topology of $Z$, respectively, one considers on $Z\times Z$ the coarsest topology for which the sets $(A\times A') \cap \{(z,z'): z-z'\in C\}$ are open, for all $A,A'\in \calO_W$, $C\in\calO_S$.
In particular, there exists a metric $d$ such that the Data topology on bounded subsets of $Z \times Z$ agrees with the topology $\tau_d$ induced by $d$ and thus Lemma \ref{le:Gamma_sequence} implies that for coercive functionals $\Gamma$ convergence in the Data topology is given by the sequential characterization  \eqref{eqdefGammaclb}--\eqref{eqdefGammacub}.

The following theorem establishes a connection between convergence of material data sets and convergence of solutions of the associated Data-Driven problems.

\begin{thm}\label{wO2cri}
Let $\setD$ and $(\setD_h)$ be subsets of a reflexive separable Banach space $Z$, ${\setE}$ a weakly sequentially closed subset of $Z$. For $(y,z) \in Z\times Z$, let
\begin{equation}
    F_h(y,z)
    =
    I_{\setD_h}(y) + I_{\setE}(z) +  \|y-z\|^2
    =
    I_{\setD_h\times {\setE}}(y,z) + \|y-z\|^2 .
\end{equation}
Suppose:
\begin{itemize}
\item[i)] (Data convergence) $\setD\times {\setE} = K(\topoD){-}\lim_{h\to\infty} (\setD_h\times {\setE})$.
\item[ii)] (Equi-transversality) There are constants $c > 0$ and $b \geq 0$ such that, for all $y \in \setD_h$ and $z \in {\setE}$,
\begin{equation}\label{xlUd1o}
   \|y-z\| \geq c (\|y\| + \|z\|) - b .
\end{equation}
\end{itemize}
Then:
\begin{itemize}
\item[a)] If $F_h(y_h,z_h) \to 0$, there exists $z \in \setD \cap {\setE}$ such that,  up to subsequences, $(z,z)$ $=$ ${\topoD}{-}\lim_{h\to\infty} (y_h,z_h)$.
\item[b)] If $z \in \setD \cap {\setE}$, there exist a sequence $(y_h,z_h)$ in $Z\times Z$ such that $(z,z) = {\topoD}{-}\lim_{h\to\infty} (y_h,z_h)$ and $F_h(y_h,z_h) \to 0$.
\end{itemize}
\end{thm}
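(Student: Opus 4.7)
The plan is to read off both assertions directly from the Kuratowski/$\Gamma$-convergence hypothesis~(i) via its sequential characterization (valid since the Data topology is metrizable on bounded subsets of $Z\times Z$, as noted just before the theorem), using equi-transversality only to secure boundedness of the relevant sequences.

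For part~(a), I start from $F_h(y_h,z_h)\to 0$, which forces $y_h\in\setD_h$ and $z_h\in{\setE}$ for all large $h$, together with $\|y_h-z_h\|\to 0$. The equi-transversality estimate~(\ref{xlUd1o}) then yields
\begin{equation*}
 c(\|y_h\|+\|z_h\|) \le \|y_h-z_h\|+b,
\end{equation*}
so $(y_h)$ and $(z_h)$ are bounded in $Z$. By reflexivity I extract a subsequence along which $y_h\weakto y$ and $z_h\weakto z$; since then $y_h-z_h\weakto y-z$ while simultaneously $y_h-z_h\to 0$ strongly, I conclude $y=z$. Hence $(y_h,z_h)\to (z,z)$ in the Data topology. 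Because $I_{\setD_h\times{\setE}}(y_h,z_h)=0$ eventually, the liminf inequality from hypothesis~(i) gives $I_{\setD\times{\setE}}(z,z)=0$, i.e., $z\in\setD\cap{\setE}$, as required.

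For part~(b), the diagonal point $(z,z)$ lies in $\setD\times{\setE}$ whenever $z\in\setD\cap{\setE}$, so the limsup / recovery-sequence part of Kuratowski convergence in hypothesis~(i) furnishes $(y_h,z_h)\in\setD_h\times{\setE}$ (eventually) with $(y_h,z_h)\to(z,z)$ in the Data topology. By the very definition of this topology, $y_h-z_h\to z-z=0$ strongly, whence $F_h(y_h,z_h)=\|y_h-z_h\|^2\to 0$.

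There is no genuine obstacle in either direction: equi-transversality does its work in~(a) purely to upgrade $\|y_h-z_h\|\to 0$ to joint boundedness of $(y_h,z_h)$, while the Data topology is tailored precisely so that its definition dovetails with the decomposition $F_h=I_{\setD_h\times{\setE}}+\|\cdot-\cdot\|^2$. The one subtle point to check carefully is the identification of the two weak limits in~(a), which rests on the elementary fact that weak convergence of differences together with strong convergence of the same differences forces equality of the weak limits.
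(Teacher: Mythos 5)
Your proposal is correct and follows essentially the same route as the paper: equi-transversality gives boundedness, reflexivity gives weak subsequential limits, the strong convergence $y_h-z_h\to 0$ identifies the two limits, and the sequential liminf/limsup characterization of the $K(\topoD)$-limit (justified by equi-coercivity, as you note) yields membership in $\setD\cap\setE$ in (a) and the recovery sequence in (b). The only cosmetic difference is that you identify $y=z$ by uniqueness of the weak limit of $y_h-z_h$ while the paper uses weak lower semicontinuity of the norm; these are equivalent.
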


\begin{proof}
We first observe that, as in the proof of Theorem \ref{sTLuz7}, equi-transversality implies equi-coercivity, so that we can work with the sequential definition of $\Gamma$-convergence in (\ref{eqdefGammaclb})-(\ref{eqdefGammacub}).

a) Since $F_h(y_h,z_h) \to 0$, it follows that $y_h \in \setD_h$, $z_h \in {\setE}$, $ \|y_h-z_h\| \to 0$. By (ii), $(y_h)$ and $(z_h)$ are bounded. Therefore, there are $y \in Z$ and $z \in Z$ such that $y_h \rightharpoonup y$ and $z_h \rightharpoonup z$ up to subsequences. By the weak closedness of ${\setE}$, $z \in {\setE}$. By weak lower-semicontinuity,
\begin{equation}
    0 \leq  \|y-z\| \leq \liminf_{h\to\infty}  \|y_h-z_h\| = 0 .
\end{equation}
Hence $y = z$ and $(y,z) = {\topoD}{-}\lim_{h\to\infty} (y_h,z_h)$. By (i),
\begin{equation}
    0
    \leq
    I_\setD(y)
    =
    I_\setD(y) + I_{\setE}(z)
    \leq
    \liminf_{h\to\infty}
    \Big( I_{\setD_h}(y_h) + I_{\setE}(z_h) \Big)
    =
    0 ,
\end{equation}
hence $y \in \setD$.

b) Let $z \in \setD\cap {\setE}$. Then, by (i) there exists a sequence $(y_h,z_h)\in\setD_h\times\setE$ with limit $(z,z) = {\topoD}{-}\lim_{h\to\infty} (y_h,z_h)$.
In particular, we have $y_h - z_h \to z - z = 0$. Hence, by continuity of the norm,
\begin{equation}
    \lim_{h\to\infty} F_h(y_h,z_h)
    =
    \lim_{h\to\infty} \Big( I_{\setD_h}(y_h) + I_{\setE}(z_h) +  \|y_h-z_h\|^2 \Big)
    =
    0 ,
\end{equation}
as required.
\end{proof}

\begin{figure} [ht]
\centering
\includegraphics[width=0.75\linewidth]{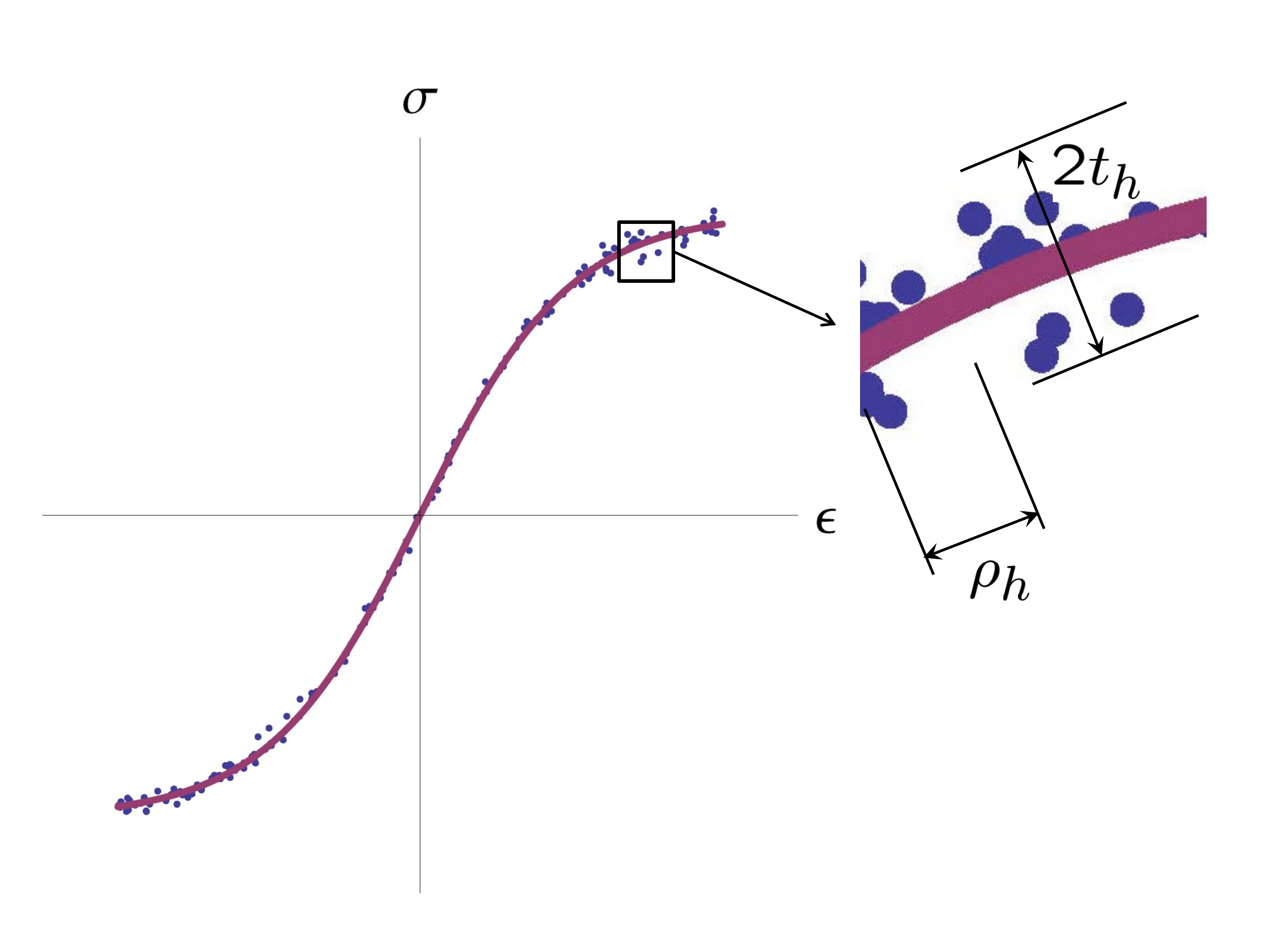}
\caption{Schematic of convergent sequence of material data sets. The parameter $t_h$ controls the spread of the material data sets away from the limiting data set and the parameter $\rho_h$ controls the density of material data point.} \label{0hiunI}
\end{figure}

We also show that Data relaxation is stable with respect to fine and uniform approximation of the material data sets.

\begin{thm}\label{fLew0a}
Let ${\setE} \subset Z$ be weakly  sequentially closed, $\setD$, $\bar\setD\subset Z$. Suppose:
\begin{itemize}
\item[i)] (Data convergence) $\bar{\setD}\times {\setE} = K(\topoD){-}\lim_{h\to\infty} (\setD\times {\setE})$.
\item[ii)] (Fine approximation) There is a sequence $\rho_h \downarrow 0$ such that
\begin{equation}
    d(\xi, \setD_{h}) < \rho_h, \qquad \forall \xi \in \setD.
\end{equation}
\item[iii)] (Uniform approximation) There is a sequence $t_h \downarrow 0$ such that
\begin{equation}
    d(\xi, \setD) < t_h, \qquad \forall \xi \in \setD_{h} .
\end{equation}
\item[iv)] (Transversality) There are constants $c > 0$ and $b \geq 0$ such that, for all $y \in \setD$ and $z \in {\setE}$,
\begin{equation}
    \|y-z\| \geq c (\|y\| + \|z\|) - b .
\end{equation}
\end{itemize}
Then, $\bar{\setD}\times {\setE} = K(\topoD){-}\lim_{h\to\infty} (\setD_h\times {\setE})$.
\end{thm}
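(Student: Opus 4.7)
The plan is to verify the two defining conditions of Kuratowski convergence in the Data topology directly, using hypothesis (i) in two complementary roles: once to produce, for every point of $\bar\setD\times\setE$, an approximating sequence in $\setD\times\setE$, and once as a closedness statement asserting that every $\Delta$-limit of a sequence in $\setD\times\setE$ already lies in $\bar\setD\times\setE$. Hypotheses (ii) and (iii) will then transfer sequences between $\setD$ and $\setD_h$ by strong perturbations of size at most $\rho_h$ or $t_h$; such perturbations leave weak limits of individual components and strong limits of differences undisturbed, which is exactly what the $\Delta$ topology demands. A short triangle-inequality computation based on (iii) and (iv) will promote the transversality assumption to equi-transversality for the sequence $\setD_h\times\setE$, securing the boundedness needed to invoke the sequential characterization of $K(\Delta)$ on bounded sets, where $\Delta$ is metrizable.

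For the outer (Kuratowski upper) inequality I take $(y_h,z_h)\in\setD_h\times\setE$ converging to $(y,z)$ in $\Delta$. Weak sequential closedness of $\setE$ gives $z\in\setE$ at once. Using (iii), I choose $\tilde y_h\in\setD$ with $\|\tilde y_h-y_h\|<t_h\to 0$. Then $\tilde y_h\rightharpoonup y$, and $\tilde y_h-z_h=(\tilde y_h-y_h)+(y_h-z_h)\to y-z$ strongly, so $(\tilde y_h,z_h)\in\setD\times\setE$ converges to $(y,z)$ in $\Delta$. Hypothesis (i) then forces $(y,z)\in\bar\setD\times\setE$.

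For the inner (Kuratowski lower) inequality I start with $(y,z)\in\bar\setD\times\setE$, invoke (i) to obtain $(\tilde y_h,\tilde z_h)\in\setD\times\setE$ converging to $(y,z)$ in $\Delta$, and then use (ii) to pick $y_h\in\setD_h$ with $\|y_h-\tilde y_h\|<\rho_h\to 0$. Setting $z_h=\tilde z_h$, I have $y_h\rightharpoonup y$, $z_h\rightharpoonup z$, and $y_h-z_h=(y_h-\tilde y_h)+(\tilde y_h-\tilde z_h)\to y-z$ strongly, so $(y_h,z_h)\in\setD_h\times\setE$ converges to $(y,z)$ in $\Delta$, as required.

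The only mild obstacle is that the $\Delta$ topology is not first-countable in general, so the sequential conditions above must be supported by an equi-coercivity argument to align them with the Kuratowski notion used in the paper. This is supplied by inherited equi-transversality: if $y\in\setD_h$ and $\tilde y\in\setD$ with $\|\tilde y-y\|<t_h$, then for any $z\in\setE$ one gets $\|y-z\|\ge\|\tilde y-z\|-t_h\ge c(\|y\|+\|z\|)-(b+(1+c)t_h)$, which is (\ref{xlUd1o}) with a uniformly bounded constant. The bookkeeping point to respect throughout is that \emph{only one component is perturbed at a time}, so that the weak limits of both components and the strong limit of their difference survive the perturbation -- which is precisely the structural feature of the Data topology that the proof exploits.
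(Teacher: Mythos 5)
Your proposal is correct and follows essentially the same route as the paper's proof: the same triangle-inequality upgrade of transversality to equi-transversality (with the same constant $b+(1+c)t_h$) to justify the sequential characterization of the $K(\Delta)$-limit, and the same one-component perturbations using (iii) for the closedness/liminf direction and (ii) for the approximation/limsup direction. The only cosmetic difference is that the paper phrases the two inequalities via $\Gamma$-convergence of the indicator functions $I_{\setD_h\times\setE}$ rather than directly as Kuratowski conditions, which the paper itself notes is equivalent.
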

\begin{remark}\label{remarklocalapprox}
If the data sets are local, in the sense that 
\begin{equation}
    \setD_h = \{ z \in Z \, : \ z(x) \in \setD_{{\rm loc},h} \text{ a.~e.~in } \Omega \} ,
\end{equation}
for some sequence of local material data sets $\setD_{{\rm loc},h} \subset \mathbb{R}^{{n} \times {n}}_{\rm sym} \times \mathbb{R}^{{n} \times {n}}_{\rm sym}$, and analogously for $\setD$ and  $\setD_{{\rm loc}} \subset \mathbb{R}^{{n} \times {n}}_{\rm sym} \times \mathbb{R}^{{n} \times {n}}_{\rm sym}$, then the approximation properties (ii) and (iii) can be written as
\begin{equation}
    d(\xi, \setD_{{\rm loc},h}) \leq \rho_h, \qquad \forall \xi \in \setD_{\rm loc}
\end{equation}
and
\begin{equation}
    d(\xi, \setD_{\rm loc}) \leq t_h, \qquad \forall \xi \in \setD_{{\rm loc},h} ,
\end{equation}
respectively.
\end{remark}
We observe that the transversality condition iv) is only used here to ensure
the sequential characterization of the $K(\Delta)$ limit.

\begin{proof}
a) {\sl Coercivity}.
We first show that the equi-transversality (condition (ii) in Theorem \ref{wO2cri}) holds and the assertions of Theorem \ref{wO2cri} follow. To see this, let $y\in\setD_h$. By the uniform approximation property (iii) there is $\hat y\in \setD$ with $\|y-\hat y\|< t_h$ and therefore for all $z\in\setE$
\begin{equation}
\begin{split}
    \|y-z\| \geq  & \|\hat y-z\| - \|y-\hat y\|\ge  c (\|\hat y\| + \|z\|) - b -t_h \\
    \ge & c (\| y\| + \|z\|) - b -(1+c)t_h.
    \end{split}
\end{equation}
The sequence $t_h$ is obviously bounded, and therefore equi-transversality holds with $b'=b+(1+c)\max_h t_h$.
In particular, we can work with the definition of $\Gamma$ convergence by sequences.

b) {\sl Liminf inequality}.
Suppose that $(y,z) = \topoD-\lim_{h\to\infty} (y_h,z_h)$ in $Z\times Z$. We need to prove
\begin{equation}\label{eqlimiinfneedtopf}
    I_{\bar{\setD}\times \setE}(y,z) \le  \liminf_{h\to\infty}   I_{\setD_h\times \setE}(y_h,z_h).
\end{equation}
If $\liminf_{h\to\infty}   I_{\setD_h\times \setE}(y_h,z_h)=\infty$, then (\ref{eqlimiinfneedtopf}) holds. Otherwise, possibly passing to a subsequence, we can assume $y_h\in \setD_h$
and $z_h \in \setE$ for all $h$. By (iii) there are $\hat y_h\in \setD$ such that $\|\hat y_h-y_h\|\le t_h$.
In particular, $\hat y_h\weakto y$ and $\hat y_h-z_h\to y-z$ strongly, so that $(y,z) = \topoD-\lim_{h\to\infty} (\hat y_h,z_h)$. By (i)
\begin{equation}
    I_{\bar{\setD}\times \setE}(y,z) \le  \liminf_{h\to\infty}   I_{\setD\times \setE}(\hat y_h,z_h)=0.
\end{equation}
This concludes the proof of the lower bound.

c) {\sl Limsup inequality}. Let $(y,z) \in Z\times Z$.
We need to construct a sequence $(y_h, z_h)\in \setD_h\times \setE$ such that $(y,z) = \topoD-\lim_{k\to\infty} (y_h,z_h)$  in $Z\times Z$ and
\begin{equation}
   \limsup_{h\to\infty}   I_{\setD_h\times \setE}(y_h,z_h) \le I_{\bar{\setD}\times \setE}(y,z) .
\end{equation}
If $I_{\bar{\setD}\times \setE}(y,z) =\infty$ a constant sequence will do, therefore we can assume $y\in \bar\setD$, $z\in \setE$. By (i), there is a sequence $(\hat y_h,z_h)\in \setD\times\setE$ such that $(y,z) = \topoD-\lim_{h\to\infty} (\hat y_h,z_h)$  in $Z\times Z$. By (ii), for every $\hat y_h$ there is $y_h\in \setD_h$ such that $\|\hat y_h-y_h\|\le \rho_h$. This implies  $(y,z) = \topoD-\lim_{k\to\infty} (y_h,z_h)$. Furthermore,
\begin{equation}
    \lim_{h\to\infty} I_{\setD_h\times \setE}(y_{h}, z_{h}) = I_{\bar{\setD}\times \setE}(y,z) ,
\end{equation}
as required.
\end{proof}


\subsection{One-dimensional problems}
\label{seconed}
\begin{figure}[ht]
	\begin{subfigure}{0.49\textwidth}\caption{} \includegraphics[width=0.99\linewidth]{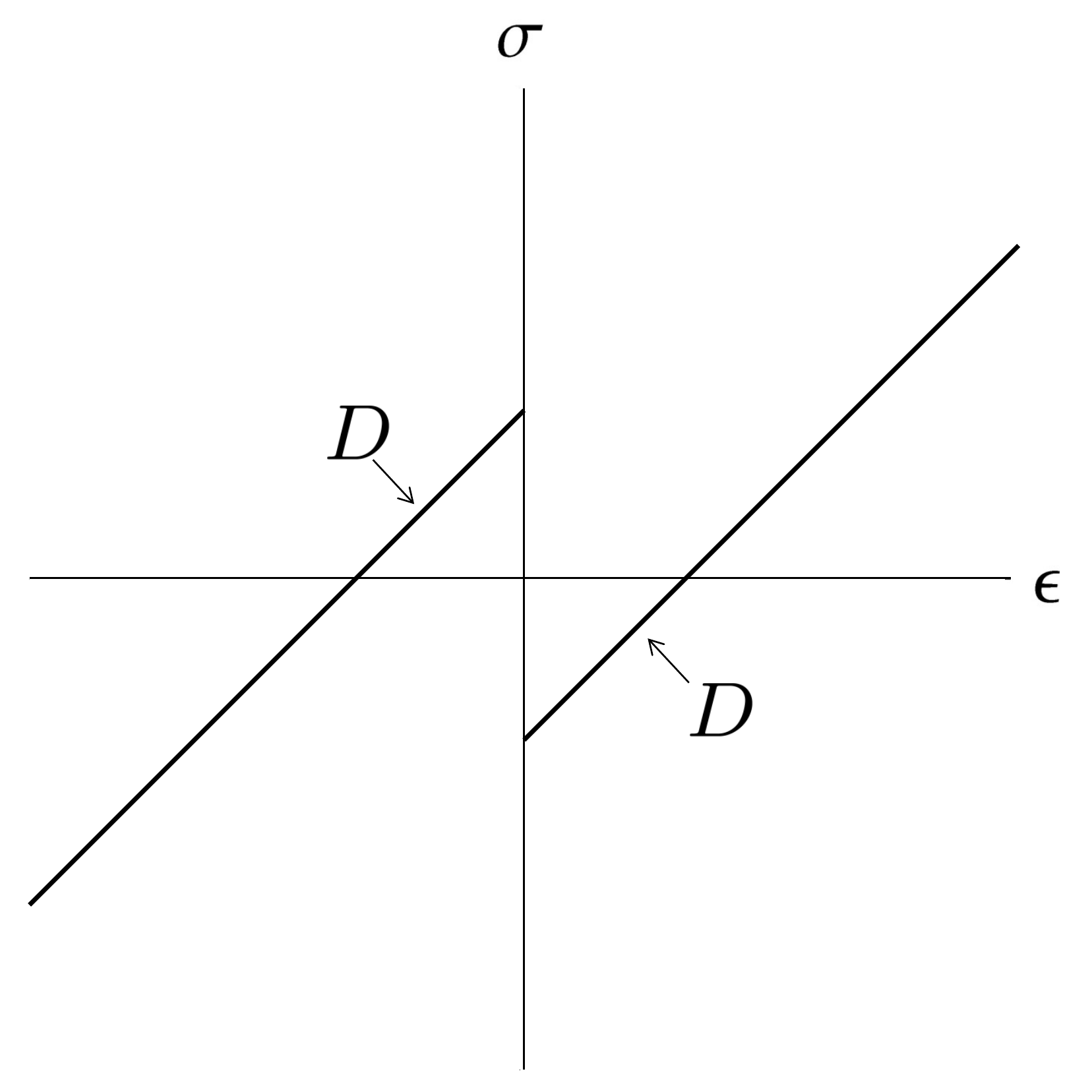}
	\end{subfigure}
	\begin{subfigure}{0.49\textwidth}\caption{} \includegraphics[width=0.99\linewidth]{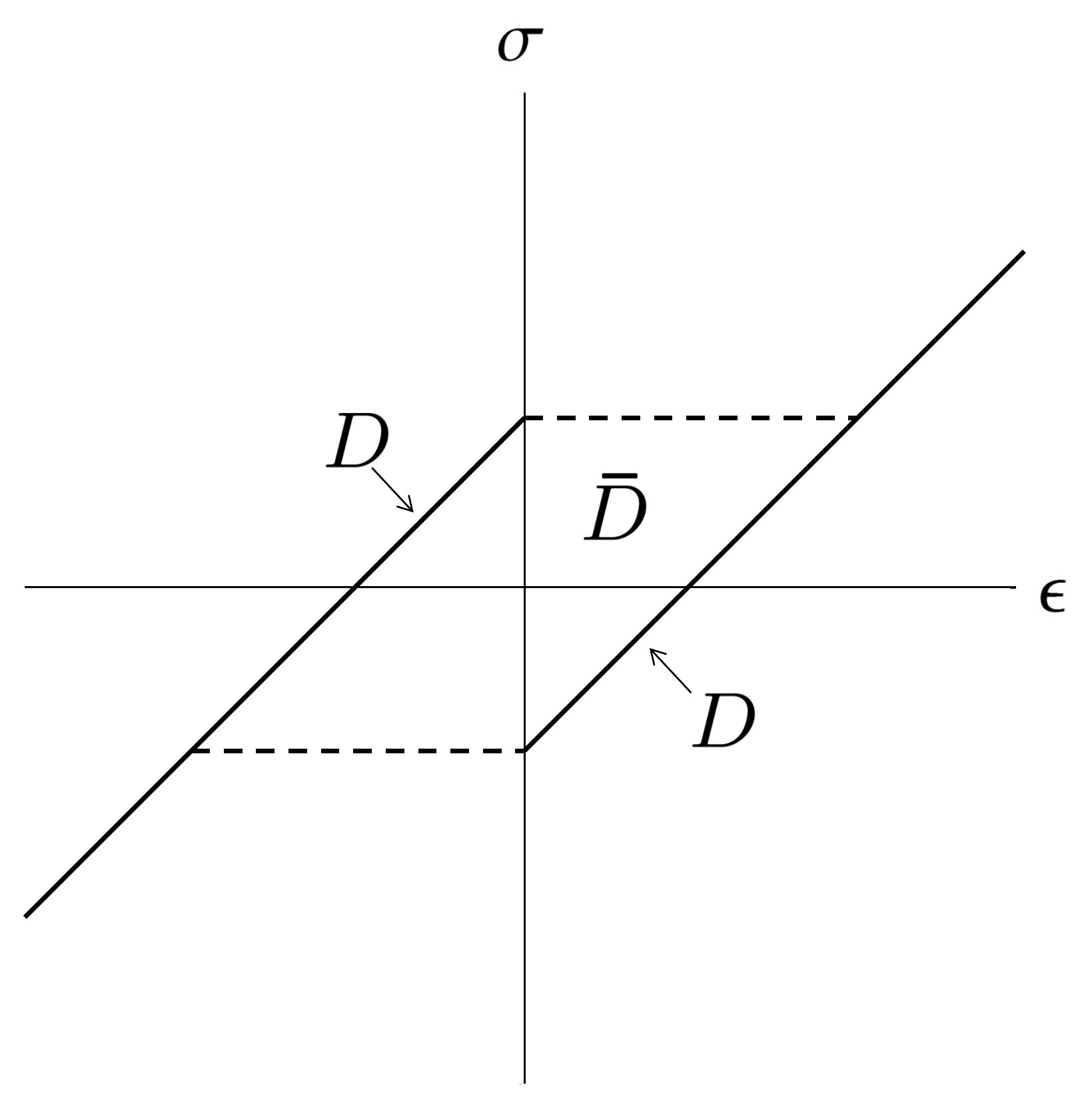}
	\end{subfigure}
	\caption{a) Two-phase material data set. b) Relaxed material data set.}
    \label{Voeb3e}
\end{figure}

We consider a one-dimensional elasticity problem defined over the domain $(0,1)$ with Dirichlet boundary conditions at both ends. We assume the body to be free of body forces. Other boundary conditions may be treated likewise. The addition of body forces represents a continuous perturbation of the problem and does not affect its relaxation.

In this case, the phase space is $Z = L^2(0,1)\times L^2(0,1)$ and, assuming $\Gamma_D=\{0,1\}$, the constraint set
$\setE$ defined in (\ref{s6oAzi}) reduces to
\begin{equation}\label{F3oewO}
    {\setE}
    =
    \{
        \epsilon \in L^2(0,1),\ \int_0^1 \epsilon \, dx = \bar{\epsilon}
    \}
    \times
    \{
        \sigma = \bar{\sigma} \in \mathbb{R}
    \} .
\end{equation}
Here $\bar\epsilon\in\R$ is the macroscopic deformation, as given by the boundary conditions.
For definiteness, we specifically consider the data set
\begin{equation}\label{c8Leki}
    \setD = \{ y \in Z \, : \ y(x) \in \setD_{\rm loc} \text{ a.~e.~in } (0,1) \} ,
\end{equation}
with
\begin{equation}\label{flUP1a}
    \setD_{\rm loc}
    =
            \{(\epsilon, \mathbb{C} \epsilon + \sigma_0),
            \epsilon \leq 0 \}\cup
            \{(\epsilon, \mathbb{C} \epsilon - \sigma_0),
            \epsilon \geq 0 \},
\end{equation}
for some constants $\mathbb{C} > 0$, $\sigma_0 \geq 0$, cf.~Fig.~\ref{Voeb3e}a. We wish to elucidate the properties of the corresponding Data-Driven solutions. To this end, we introduce the relaxed material data set
\begin{equation}\label{Xo0giU}
    \bar{\setD} = \{ y \in Z \, : \ y(x) \in \bar{\setD}_{\rm loc} \text{ \ae~in } (0,1) \} ,
\end{equation}
with
\begin{align}\label{2rIefo}
    \bar{\setD}_{\rm loc}
    =&
            \{(\epsilon, \mathbb{C} \epsilon + \sigma_0),
            \epsilon \leq - 2 \sigma_0/\mathbb{C} \} \\
            &\cup
          \{  \{\epsilon\}\times[-\sigma_0,\mathbb{C}\epsilon + \sigma_0] ,
            - 2 \sigma_0/\mathbb{C} \leq \epsilon \leq 0\}  \notag\\
            &
            \cup
            \{\{\epsilon\}\times[\mathbb{C}\epsilon - \sigma_0, \sigma_0] ,
             0 \leq \epsilon \leq  2 \sigma_0/\mathbb{C}\}\notag\\
             &\cup
            \{(\epsilon, \mathbb{C} \epsilon - \sigma_0),
             \epsilon \geq 2 \sigma_0/\mathbb{C} \},\notag
\end{align}
cf.~Fig.~\ref{Voeb3e}b.

We proceed to show that $\bar{\setD}$ is indeed the relaxation of $\setD$.

\begin{thm}
Let $Z = L^2(0,1)\times L^2(0,1)$, ${\setE} \subset Z$ as in (\ref{F3oewO}), $\setD$ as in (\ref{c8Leki}) and (\ref{flUP1a}) and $\bar{\setD}$ as in (\ref{Xo0giU}) and (\ref{2rIefo}).
Then, $\bar{\setD} \times \setE= K(\topoD){-}\lim_{h\to\infty} \setD\times \setE$.
\end{thm}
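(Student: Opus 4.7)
The plan is to verify directly the two Kuratowski inclusions for the constant sequence. Since every weakly convergent sequence in $Z$ is bounded and the Data topology is metrizable on bounded subsets of $Z\times Z$, it suffices to work with sequences.

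\textbf{Closedness: $\topoD$-sequential closure $\subseteq \bar\setD\times\setE$.} Suppose $(y_h,z_h)\in \setD\times \setE$ with $y_h\weakto y$, $z_h\weakto z$, and $y_h-z_h\to y-z$ strongly in $Z$. Writing $y_h=(\epsilon_h,\sigma_h)$, $z_h=(\eta_h,\bar\sigma_h)$ with $\bar\sigma_h\in\R$ constant, weak convergence of constants is strong, so $\bar\sigma_h\to\bar\sigma$ in $\R$. Combined with strong convergence of $\sigma_h-\bar\sigma_h$, this yields $\sigma_h\to\sigma$ strongly in $L^2(0,1)$, hence pointwise a.e.\ along a subsequence. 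Define a measurable branch indicator $\lambda_h:(0,1)\to\{0,1\}$ taking the value $0$ on $\{\sigma_h=\C\epsilon_h+\sigma_0,\ \epsilon_h\le 0\}$ and the value $1$ on $\{\sigma_h=\C\epsilon_h-\sigma_0,\ \epsilon_h\ge 0\}$, so that
\begin{equation*}
    \C\epsilon_h(x)=\sigma_h(x)-\sigma_0+2\sigma_0\,\lambda_h(x)\qquad\text{a.e.}
\end{equation*}
Passing to a further subsequence, $\lambda_h\weakstarto\lambda$ in $L^\infty(0,1)$ with $\lambda(x)\in[0,1]$ a.e. Combining the strong limit of $\sigma_h$ and the weak limit of $\lambda_h$, the identity passes to the limit and gives $\C\epsilon=\sigma-\sigma_0+2\sigma_0\lambda$ a.e. The pointwise implications $\lambda_h=1\Rightarrow\sigma_h\ge-\sigma_0$ and $\lambda_h=0\Rightarrow\sigma_h\le\sigma_0$, together with pointwise convergence of $\sigma_h$, force $\lambda=1$ a.e.\ on $\{\sigma>\sigma_0\}$ and $\lambda=0$ a.e.\ on $\{\sigma<-\sigma_0\}$ by dominated convergence. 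A case analysis on the sign of $\sigma(x)$ then shows $(\epsilon(x),\sigma(x))\in\bar\setD_{\rm loc}$ a.e., so $y\in\bar\setD$; weak closedness of $\setE$ gives $z\in\setE$.

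\textbf{Recovery: $\bar\setD\times\setE\subseteq\topoD$-sequential closure.} Given $(y,z)\in\bar\setD\times\setE$ with $y=(\epsilon,\sigma)$ and $z=(\eta,\bar\sigma)$, define the convex combination parameter
\begin{equation*}
    \lambda(x):=\frac{\C\epsilon(x)+\sigma_0-\sigma(x)}{2\sigma_0},
\end{equation*}
which satisfies $\lambda(x)\in[0,1]$ a.e.\ thanks to the description of $\bar\setD_{\rm loc}$. Choose $\{0,1\}$-valued functions $\chi_h$ (fine laminates) with $\chi_h\weakstarto\lambda$ in $L^\infty$, and set
\begin{equation*}
    \sigma_h:=\sigma,\qquad \epsilon_h:=\frac{\sigma-\sigma_0}{\C}+\frac{2\sigma_0}{\C}\chi_h.
\end{equation*}
Then $(\epsilon_h(x),\sigma(x))\in\setD_{\rm loc}$ a.e.\ (first branch where $\chi_h=0$, second branch where $\chi_h=1$), so $y_h:=(\epsilon_h,\sigma_h)\in\setD$. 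Moreover $\sigma_h\to\sigma$ trivially and $\epsilon_h\weakto\epsilon$ by construction. Set $z_h:=(\eta_h,\bar\sigma)\in\setE$ with
\begin{equation*}
    \eta_h:=\epsilon_h+(\eta-\epsilon)+c_h,
\end{equation*}
where $c_h\in\R$ is the unique constant enforcing $\int_0^1\eta_h\,dx=\bar\epsilon$. Since $\int\epsilon_h\to\int\epsilon$, we have $c_h\to 0$, hence $\eta_h\weakto\eta$ and
\begin{equation*}
    \epsilon_h-\eta_h=-(\eta-\epsilon)-c_h\to\epsilon-\eta
\end{equation*}
strongly in $L^2$. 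The stress component satisfies $\sigma_h-\bar\sigma_h=\sigma-\bar\sigma$ identically, completing the $\topoD$-convergence $(y_h,z_h)\to(y,z)$.

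\textbf{Principal obstacle.} The decisive feature is that $\topoD$-convergence of $y_h$ to $y$ forces the stress component to converge strongly, $\sigma_h\to\sigma$ in $L^2$. Once this is in place, only the strain variable carries oscillations, and the identification of the relaxed data set reduces to a one-dimensional Young-measure argument: the weak limit $\lambda$ of the branch indicator encodes the local volume fraction of the two phases, and its range $[0,1]$ together with the pointwise constraints imposed by $\sigma$ reproduces exactly $\bar\setD_{\rm loc}$. Without the strong-convergence enhancement built into $\topoD$, the pointwise branch assignment could not be pinned down on $\{|\sigma|>\sigma_0\}$, and the $\topoD$-closure would be strictly larger than $\bar\setD$.
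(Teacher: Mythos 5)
Your two inclusions are, at the level of sequences, essentially right, and the closedness half coincides with the paper's argument: constancy of the stress on $\setE$ plus strong convergence of $y_h-z_h$ upgrades $\sigma_h$ to strong convergence, a $\{0,1\}$-valued branch indicator is extracted, and the pointwise inequalities $(1-\lambda_h)(\sigma_h-\sigma_0)\le 0$ and $\lambda_h(\sigma_h+\sigma_0)\ge 0$ pass to the limit to pin down $\lambda$ on $\{|\sigma|>\sigma_0\}$. Your recovery half is genuinely different from the paper's and simpler: the paper covers $\Omega$ by balls around Lebesgue points, laminates a piecewise-constant approximation and diagonalizes, whereas you laminate globally in the strain variable only, exploiting that the two branches of $\setD_{\rm loc}$ are parallel lines (so the oscillation direction $(2\sigma_0/\C,0)$ is the same at every point) and that compatibility in one dimension is a single integral constraint, repaired by the constant $c_h$. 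This shortcut is valid and buys a cleaner proof, but it is specific to the one-dimensional setting.

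Two points need repair. First, the reduction to sequences: $K(\topoD)$-convergence is a topological notion and the Data topology is not first countable, so the topological closure of $\setD\times\setE$ could a priori exceed its sequential closure; the observation that weakly convergent sequences are bounded does not address points approachable only by unbounded nets. What closes this gap is the transversality estimate $\|y-z\|\ge c(\|y\|+\|z\|)-b$ on $\setD\times\setE$, which guarantees that the elements of $\setD\times\setE$ lying in any basic $\topoD$-neighbourhood (which controls $y-z$ strongly) form a bounded set, where metrizability applies; the paper devotes step i) of its proof to exactly this, using that $\sigma$ is constant on $\setE$, that $|\beta-\C\alpha|\le\sigma_0$ on $\setD_{\rm loc}$, and that $\int_0^1\epsilon\,dx=\bar\epsilon$. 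You should supply this. Second, in the recovery construction the claim that $(\epsilon_h(x),\sigma(x))\in\setD_{\rm loc}$ a.e.\ is false for an arbitrary $\{0,1\}$-valued $\chi_h\weakstarto\lambda$: at a point with $\sigma(x)>\sigma_0$ and $\chi_h(x)=0$ you get $\epsilon_h(x)=(\sigma(x)-\sigma_0)/\C>0$, which lies on neither branch. You must additionally stipulate $\chi_h=\lambda$ wherever $\lambda\in\{0,1\}$, in particular $\chi_h=1$ on $\{\sigma>\sigma_0\}$ and $\chi_h=0$ on $\{\sigma<-\sigma_0\}$; since the structure of $\bar\setD_{\rm loc}$ forces $\lambda=1$ and $\lambda=0$ there respectively, this is easily arranged (laminate only on $\{0<\lambda<1\}$), but it must be stated for the construction to land in $\setD$.
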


\begin{proof}
i) We first verify the equi-transversality condition. Let $y=(\alpha,\beta)\in\setD$, $z=(\eps,\sigma)\in\setE$, and
\begin{equation}
    \|y-z\|\ge c \|\alpha-\eps\|_{L^2}+c\|\beta-\sigma\|_{L^2}.
\end{equation}
Since $\sigma$ is constant, $\|y-z\|$ controls the oscillation of $\beta$. From $y\in \setD$ we deduce  $|\alpha-\C\beta|\le \sigma_0$ pointwise, hence also the oscillation of $\alpha$ is controlled. The term $\|\alpha-\eps\|_{L^2}$ controls the distance of the average of $\alpha$ from $\bar\eps$. Therefore
\begin{equation}
    \|y-z\|\ge c(\|y\|+\|z\|) - b
\end{equation}
with $b$ depending on $\C$, $\sigma_0$, $\bar\eps$.

ii) Let $(y_h,z_h)$ be a sequence in $Z\times Z$ with limit $(y,z) = {\topoD}{-}\lim_{h\to\infty}(y_h,z_h)$. We need to show that
\begin{equation}
    \liminf_{h\to\infty}
    \Big( I_{\setD}(y_h) + I_{\setE}(z_h)  \Big)
    \geq
    I_{\bar{\setD}}(y) + I_{\setE}(z).
\end{equation}
By $\topoD$-convergence, we have: $y_h \rightharpoonup y$; $z_h \rightharpoonup z$; and $y_h - z_h \to y - z$.
It is enough to consider the case $y_h \in \setD$, $z_h \in {\setE}$, hence $z \in {\setE}$ by the weak closedness of ${\setE}$ in $Z$. Therefore, it remains only to verify that $y \in \bar{\setD}$.

Let $y_h = (\alpha_h,\beta_h)$, $y = (\alpha,\beta)$, $z_h = (\epsilon_h,\sigma_h)$ and $z = (\epsilon,\sigma)$. The convergences above then give $\alpha_h \rightharpoonup \alpha$ in $L^2(0,1)$; $\beta_h \rightharpoonup \beta$ in $L^2(0,1)$; $\epsilon_h \rightharpoonup \epsilon$ in $L^2(0,1)$; $\sigma_h \weakto \sigma$ in $L^2(0,1)$; $\alpha_h - \epsilon_h \to \alpha - \epsilon$  in $L^2(0,1)$; and $\beta_h - \sigma_h \to \beta - \sigma$ in $L^2(0,1)$.
Since $z_h\in \setE$ we have $\sigma_h=\bar\sigma_h\in\R$. Thus $\sigma=\bar\sigma\in\R$ and $\beta_h \to \beta$ in $L^2(0,1)$.

Since $(\alpha_h,\beta_h)\in \setD_\loc$ we have $\beta_h=\C \alpha_h +\sigma_0 \chi_h$, for some $\chi_h:(0,1)\to\{\pm 1\}$ that obeys $\chi_h=-1$ on the set $\{\beta_h>\sigma_0\}$, $\chi_h=1$ on the set $\{\beta_h<-\sigma_0\}$. It is immediate that $\beta=\C \alpha +\sigma_0 \chi$, where $\chi\in L^\infty((0,1);[-1,1])$ is the weak limit of $\chi_h$. Further, from $(\chi_h+1)(\beta_h-\sigma_0)\le 0$ pointwise and the strong convergence of $\beta_h$ we deduce $(\chi+1)(\beta-\sigma_0)\le 0$ almost everywhere, hence $\chi=-1$ almost everywhere on the set $\{\beta>\sigma_0\}$. Analogously one shows that $\chi=1$ \ae on  $\{\beta<-\sigma_0\}$. Hence, $(\alpha(x),\beta(x)) \in \bar{\setD}_{\rm loc}$ for a.~e.~$x \in (0,1)$ and $y = (\alpha,\beta) \in \bar{\setD}$.

iii) Let $(y,z) \in Z\times Z$. We need to show that there exists a sequence $(y_h,z_h)$ in $Z\times Z$ with $(y,z) = {\topoD}{-}\lim_{h\to\infty}(y_h,z_h)$ such that
\begin{equation}\label{diaM0E}
    \lim_{h\to\infty}
    \Big( I_{\setD}(y_h) + I_{\setE}(z_h)  \Big)
    \le
    I_{\bar{\setD}}(y) + I_{\setE}(z)  .
\end{equation}
We can suppose that $(y,z) \in \bar{\setD}\times {\setE}$.
Let $\eta>0$. Then for almost every $x\in (0,1)$ there is $\delta_x>0$ such that
\begin{equation}
y(x)\in \bar\setD_\loc \hskip5mm\text{ and } \hskip5mm \frac{1}{2\delta_x} \int_{x-\delta_x}^{x+\delta_x} |y-y(x)|^2 dx' <\eta.
\end{equation}
We cover almost all of $(0,1)$ by countably many such segments, $I_i=(x_i-\delta_i, x_i+\delta_i)$, and construct a function $y^\eta\in \bar\setD$ that is constant on each of these segments and obeys $\|y^\eta-y\|_{L^2}<\eta$.

We consider one of the segments and set $(\alpha_i,\beta_i)=y(x_i)\in\bar \setD_\loc$. By the definition of $\bar\setD_\loc$ there is $\lambda_i\in[0,1]$ such that
\begin{equation}
 \alpha_i = \lambda_i \C^{-1}(\beta_i -\sigma_0) + (1-\lambda_i) \C^{-1}(\beta_i +\sigma_0).
\end{equation}
(if $\beta_i>\sigma_0$ then necessarily $\lambda_i=0$, if $\beta_i<-\sigma_0$ then necessarily $\lambda_i=1$). Let $\alpha_h^{i,\eta}\in L^\infty(I_i,\{\C^{-1}(\beta_i +\sigma_0),\C^{-1}(\beta_i -\sigma_0)\})$ be a sequence that converges weakly to $\alpha_i$ and such that each $\alpha_h^{i,\eta}$ has average $\alpha_i$ (over the domain $I_i$). We set $\alpha_h^\eta=\alpha_h^{i,\eta}$ on $I_i$. One can then verify that $\alpha_h^\eta$ is bounded in $L^2$ uniformly in $h$ and $\eta$ and converges weakly as $h\to\infty$ to $\alpha^\eta$. Finally, we let $(\eps,\sigma)=z$, define
\begin{equation}
 \eps_h^\eta = \eps + \alpha_h^\eta -\alpha^\eta
\end{equation}
and set $z_h^\eta=(\eps_h^\eta, \sigma)$.
Then $z_h^\eta\in \setE$ and, denoting by  $\alpha$ the first component of $y$,
\begin{equation}
\|(\alpha_h^\eta-\eps_h^\eta)-(\alpha-\eps)\|_{L^2}=  \|\alpha^\eta-\alpha\|_{L^2} \le
\eta.
\end{equation}
Taking a diagonal subsequence we obtain  $(y_h,z_h) \in Z\times Z$ such that $y_h \in \setD$, $z_h \in {\setE}$ and $(y,z) = {\topoD}{-}\lim_{h\to\infty}(y_h,z_h)$, whereupon (\ref{diaM0E}) reduces to
\begin{equation}
    \lim_{h\to\infty}
     \|y_h-z_h\|
    =
    \|y-z\| ,
\end{equation}
which is indeed satisfied by the $\topoD$-convergence of $(y_h,z_h)$ to $(y,z)$.
\end{proof}

We note that the relaxed Data-Driven problem differs markedly from the classical relaxation of the two-well problem. Indeed, the classical variational formulation deals with minimizing $\int_0^1 W(u') dx$ over all $u\in H^1((0,1))$ with $u(1)-u(0)=\bar\epsilon$,
where the energy density takes the form
\begin{equation}
 W(\eps) = \min\{ \frac12\C (\eps+\C^{-1}\sigma_0)^2, \frac12\C (\eps-\C^{-1}\sigma_0)^2 \}.
\end{equation}
The relaxation of this scalar problem is obtained  replacing $W$ by its convex envelope,
\begin{equation}
 W^{**}(\eps) =
 \begin{cases}
  \frac12\C (\eps+\C^{-1}\sigma_0)^2 & \text{ if } \eps<-\sigma_0/\C\\
  0 & \text{ if } -\sigma_0/\C\le \eps\le\sigma_0/\C\\
  \frac12\C (\eps-\C^{-1}\sigma_0)^2 & \text{ if } \eps>\sigma_0/\C\,.
 \end{cases}
\end{equation}
This convex envelope corresponds to the data set
\begin{equation}\begin{split}
    \bar{\setD}'_{\rm loc}
    =&
            \{(\epsilon, \mathbb{C} \epsilon + \sigma_0),
            \epsilon \leq - \sigma_0/\mathbb{C}  \}   \\
      &      \cup \{(\epsilon,0),
             - \sigma_0/\mathbb{C} \leq \epsilon \leq \sigma_0/\mathbb{C}\},\\
& \cup            \{(\epsilon, \mathbb{C} \epsilon - \sigma_0),
             \epsilon \geq \sigma_0/\mathbb{C} \}.
                \end{split}
\end{equation}
which is markedly different from the Data relaxation $\bar\setD_\loc$.

Interestingly, the Data relaxed material data set (\ref{2rIefo}) has the 'flag' form that observed experimentally in materials undergoing displacive phase transitions when tested under cyclic loading, including unloading/reloading from partially transformed states, Fig.~\ref{KlUql9}.

\begin{figure}[ht]
	\begin{subfigure}{0.49\textwidth}\caption{} \includegraphics[width=0.99\linewidth]{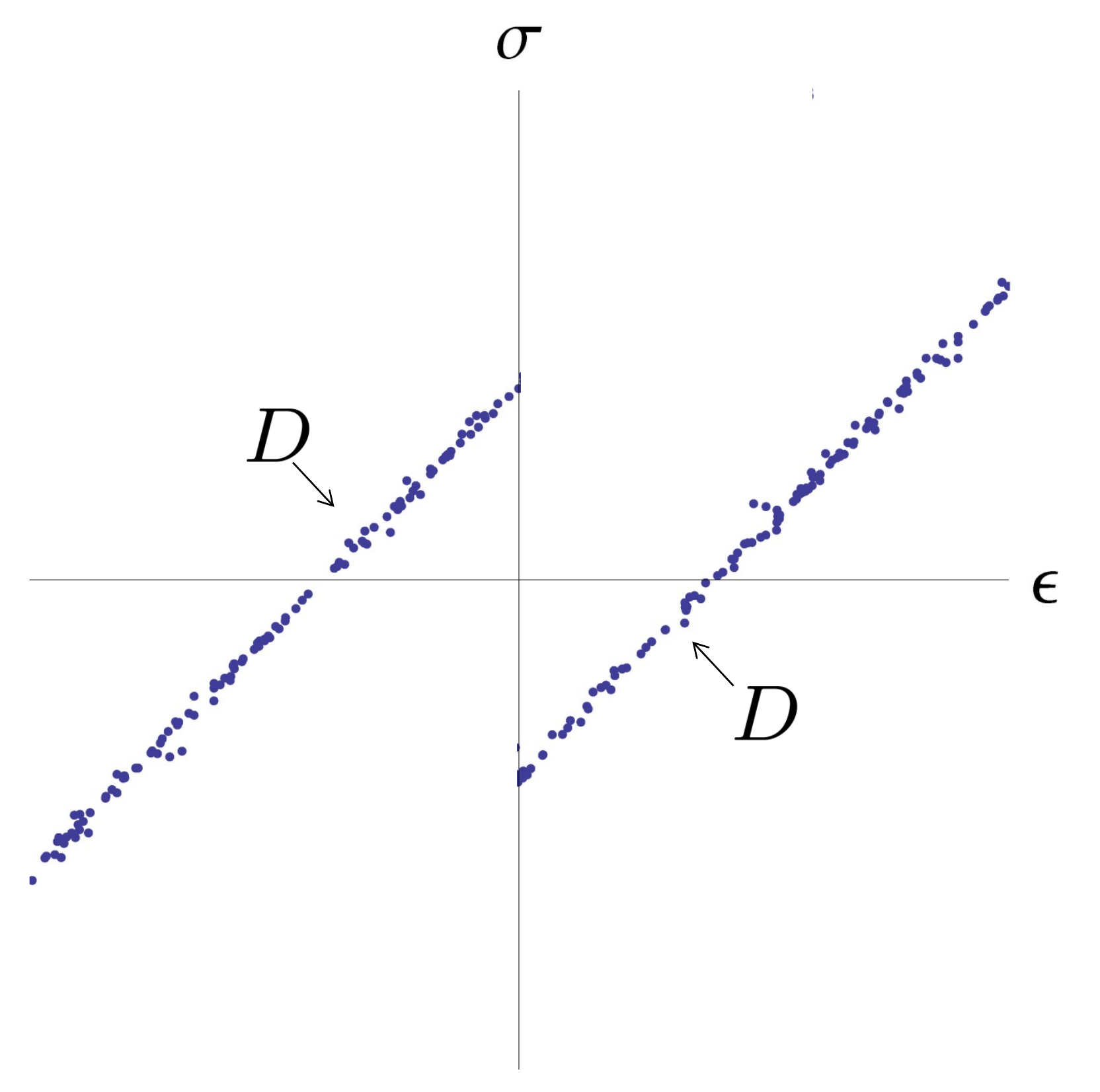}
	\end{subfigure}
	\begin{subfigure}{0.49\textwidth}\caption{} \includegraphics[width=0.99\linewidth]{Bar1.pdf}
	\end{subfigure}
	\caption{a) Sampled two-phase material data set. b) Relaxed material data set.}
    \label{jOa8hl}
\end{figure}

We may also characterize directly the distance-minimizing solutions. Indeed, the Data-Driven problem reduces to minimizing
\begin{equation}
    \Big\{
        d^2(z,\setD),
        \ z = (\epsilon,\bar{\sigma}) ,
        \ \epsilon \in L^2(0,1),
        \ \bar{\sigma} \in \mathbb{R} ,
        \ \int_0^1 \epsilon(x) \, dx = \bar{\epsilon}
    \Big\} .
\end{equation}
We write $\setD=\setD_+\cup \setD_-$, where $\setD_\pm=\{(\eps, \C\eps\mp\sigma_0): \pm\eps\ge 0\}$.
We subdivide $(0,1)$ into $\omega_+=\{x: d(z(x),\setD_+)<d(z(x),\setD_-)\}$ and $\omega_-=(0,1)\setminus\omega_+$.
By convexity and Jensen's inequality, $\epsilon$ may be taken to be a constant in each of them, and the problem reduces to minimizing
\begin{equation}
\begin{split}
    \Big\{
	  \lambda_A d^2((\eps_A,\bar\sigma),\setD_+) +\lambda_B d^2((\eps_B,\bar\sigma),\setD_-) :&
        \  \lambda_A+\lambda_B=1, \ \lambda_{A,B}\ge 0,\\
        &
        \bar{\sigma} \in \mathbb{R} , \
        \ \lambda_A\eps_A+\lambda_B\eps_B = \bar{\epsilon}
    \Big\} .
    \end{split}
\end{equation}
It follows that the minimum is zero  if and only if $(\bar{\epsilon},\bar{\sigma}) \in \bar{\setD}_{\rm loc}$, in agreement with Theorem~\ref{wO2cri}.

We note that, by Theorem~\ref{fLew0a}, cf.~also Remark~\ref{remarklocalapprox}, the same Data relaxation is obtained under
uniform sampling of $\setD_{\rm loc}$, as in Fig.~\ref{4iAcro}. Also, a similar analysis shows that $\bar{\setD}_{\rm loc} = \setD_{\rm loc}$ when $\setD_{\rm loc}$ is a monotonic graph. Therefore, monotonic graphs are stable with respect to Data relaxation, as expected.

\subsection{The multidimensional two-well problem}
\label{sectwowell}

We illustrate the set-valued character of Data relaxation in multiple dimensions with the aid of the  two-well problem with equal elastic moduli. This two-well problem of  linearized elasticity has been studied by many authors, including in particular Khachaturyan \cite{Khachaturyan1967some, KhachaturyanShatalov1969theory, Khachaturyan1983}, Roitburg \cite{Roitburd69, Roitburd78} and  Kohn \cite{RN26}, who obtained the classical relaxation of the problem. Again, we restrict attention to linearized kinematics and identify the phase space ${Z}$ with $L^2(\Omega;\mathbb{R}^{{n} \times {n}}_{\rm sym}) \times L^2(\Omega;\mathbb{R}^{{n}\times {n}}_{\rm sym})$ metrized by norm (\ref{nIus4o}). We recall that the constraint set ${\setE}$, eq.~(\ref{s6oAzi}), consists of the elements of $Z$ that are compatible and in equilibrium.

Let $a,b \in \mathbb{R}^{{n}\times {n}}_{\rm sym}$. Given an elasticity tensor
\begin{equation}\label{fOa8lA}
    \mathbb{C} \in L(\mathbb{R}^{{n}\times {n}}_{\rm sym}) ,
    \quad
    \mathbb{C}^T = \mathbb{C} ,
    \quad
    \mathbb{C} > 0 ,
\end{equation}
we consider a local material data set of the form
\begin{align}
    \setD_{\rm loc}
    =&
        \{(\epsilon,\mathbb{C} (\epsilon - a)): \eps\in\R^{n\times n}_\sym \text{ with } W(\epsilon - a) \le W(\epsilon - b)\} \\
        &\cup
        \{(\epsilon, \mathbb{C} (\epsilon - b)): \eps\in\R^{n\times n}_\sym \text{ with }
       W(\epsilon - b) \le W(\epsilon - a) \},
\notag
\end{align}
where we write
\begin{equation}\label{eqdefweps}
    W(\epsilon) = \frac{1}{2} \mathbb{C} \epsilon \cdot \epsilon .
\end{equation}
Equivalently,
\begin{equation}\label{5Leqie}
    \begin{split}
    \setD_{\rm loc}
    =& \{ (\epsilon, \mathbb{C} (\epsilon - a) ):\epsilon\in\R^{n\times n}_\sym\text{ with } \mathbb{C} \epsilon \cdot ( b - a ) \le W(b) - W(a) \}
    \\
    &\cup  \{ (\epsilon,    \mathbb{C} (\epsilon - b))  :\epsilon\in\R^{n\times n}_\sym\text{ with }
 \mathbb{C} \epsilon \cdot ( b - a ) \ge  W(b) - W(a)\}.
 \end{split}
\end{equation}
This local material data set represents a material that can be in one of two phases characterized by transformation strains $a$ and $b$. The classical variational formulation of the problem deals then with the minimization of $\int_\Omega \min \{W(e(u)-a), W(e(u)-b)\} dx$.

After a translation, we may and will assume without loss of generality that
$a = -b$.
Then
\begin{equation}  \label{eq:def_Dloc1}
\setD_\loc = \setD^+_\loc \cup \setD^-_\loc
\end{equation}
where
\begin{align}   \label{eq:def_Dloc2}
  \setD^+_\loc :=&\,  \{ (\C^{-1} \sigma +  b, \sigma) : \sigma \in \R^{n \times n}_\sym, \,  \sigma \cdot b \ge - \C b \cdot b\}, \\
    \label{eq:def_Dloc3}
\setD^-_\loc :=&\,  \{ (\C^{-1} \sigma - b, \sigma) : \sigma \in \R^{n \times n}_\sym, \,  \sigma \cdot b \le   \C b \cdot b   \} = -\setD^+_\loc.
\end{align}
For $c, \nu \in  \R^n$, we define the symmetrized tensor product $c \odot \nu$ by $(c\odot\nu)_{ij} = (c_i\nu_j+c_j\nu_i)/2$. 

Our main result is the following.
\begin{thm}
Consider the global material data sets
\begin{equation}\label{eqdefsetDtwowell}
    \setD = \{ y \in Z \, : \ y(x) \in \setD_{\rm loc} \text{ \ae~in } \Omega \} ,
\end{equation}
where $\setD_{\rm loc}$ is given by    \eqref{eq:def_Dloc1}--\eqref{eq:def_Dloc3},
and
\begin{equation}\label{eqdefsetDbartwowell}
    \bar{\setD} = \{ y \in Z \, : \ y(x) \in \bar{\setD}_{\rm loc} \text{ \ae~in } \Omega \} ,
\end{equation}
with $\bar{\setD}_{\rm loc}$ given by

\begin{equation}\label{eqdefDlocbar3_bis}
    \begin{split}
  \bar  \setD_{\rm loc}
    =&  \setD_\loc  
    \cup  \{ (\C^{-1}\sigma + \mu b , \sigma)   \text{ for }\mu \in(-1,1),  \sigma\in\R^{n\times n}_\sym,\\
&\hskip1.5cm   | \sigma\cdot b  +  \alpha_- \, \mu|  \le \C b \cdot b  -  \alpha_-  \}.
    \end{split}
\end{equation}
The parameter $\alpha_-$ is defined by 
\begin{equation}  \label{eq:def_alpha_minus}
 \alpha_- :=  \min \left\{\C \big(c \odot \nu -   b \big) \cdot \big( c \odot \nu -   b \big) :   c \in \R^n, \nu \in S^{n-1} \right\}
\end{equation}
%
%
and $\Omega$, $Z$, $\setE$ are  as specified above. Then, $\bar{\setD} \times \setE$ is the Data relaxation of $\setD\times \setE$, in the sense that $\bar{\setD} \times \setE= K(\topoD){-}\lim_{h\to\infty} \setD\times \setE$.
\end{thm}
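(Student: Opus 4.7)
The plan is to verify the two halves of the Kuratowski limit characterization in the Data topology—a $\liminf$ inequality and a $\limsup$ (recovery sequence) inequality—for the indicator functions of $\setD\times\setE$ and $\bar\setD\times\setE$. I first check equi-transversality of $\setD$ and $\setE$, in analogy with the proof of Corollary~\ref{corlinearleastc}, so that the sequential characterization \eqref{eqdefGammaclb}--\eqref{eqdefGammacub} of $\Gamma$-convergence in $\topoD$ is available.

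For the $\limsup$ inequality, given a target $(y,z)\in\bar\setD\times\setE$, I approximate $y$ by piecewise-constant functions $y^\eta$ on small subregions of $\Omega$ and work on each piece. If the pointwise value $y(x)=(\alpha,\beta)\in\setD_\loc$, the constant sequence suffices. Otherwise $(\alpha,\beta)=(\C^{-1}\sigma+\mu b,\sigma)$ with $\mu\in(-1,1)$ and $|\sigma\cdot b+\alpha_-\mu|\leq\C b\cdot b-\alpha_-$. I then construct a rank-one laminate with normal $\nu^\ast$, the minimizer in the definition of $\alpha_-$, with volume fractions $\lambda=(1+\mu)/2$ and $1-\lambda$. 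Setting $\tau:=2\C(c^\ast\odot\nu^\ast-b)$, the first-order condition $\C(c^\ast\odot\nu^\ast-b)\nu^\ast=0$ gives exactly the equilibrium condition $\tau\nu^\ast=0$, while $\C^{-1}\tau+2b=2c^\ast\odot\nu^\ast$ is rank-one in $\nu^\ast$ and so gives strain compatibility. Choosing the two phases of $y_h$ and of $z_h$ with stresses $\beta+(1-\lambda)\tau$ and $\beta-\lambda\tau$, a short computation exploiting the identity $\C(c^\ast\odot\nu^\ast)\cdot(c^\ast\odot\nu^\ast)=\C b\cdot b-\alpha_-$ yields $\tau\cdot b=-2\alpha_-$, so that the phase-membership conditions $\pm\beta_h^\pm\cdot b\geq -\C b\cdot b$ translate \emph{exactly} into the relaxation bound $|\sigma\cdot b+\alpha_-\mu|\leq\C b\cdot b-\alpha_-$. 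By construction $y_h-z_h$ is constant on each piece, hence converges strongly; standard gluing via a smooth partition of unity on a scale coarser than the laminate wavelength introduces only strongly small errors.

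For the $\liminf$ inequality, let $(y_h,z_h)\to(y,z)$ in $\topoD$ with $y_h\in\setD$, $z_h\in\setE$. Weak closedness of $\setE$ gives $z\in\setE$. Writing $\alpha_h=\C^{-1}\beta_h+\chi_h b$ with $\chi_h\in\{-1,+1\}$ and extracting a weak-$\ast$ limit $\chi\in L^\infty(\Omega;[-1,1])$, passing to the limit yields $\alpha=\C^{-1}\beta+\chi b$. The crucial point is that the strong convergence of $y_h-z_h$ makes $(\alpha_h,\beta_h)$ $\calA$-free up to a strongly convergent perturbation, where $\calA$ encodes compatibility and equilibrium; thus compensated compactness applies directly to $(\alpha_h,\beta_h)$. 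Applying this with an $\calA$-quasiconvex quadratic form whose zero on the characteristic cone $\{(c\odot\nu,\tau):\tau\nu=0\}$ is aligned with the minimizing pair $(c^\ast,\nu^\ast)$, and combining with the pointwise identity $\C(\chi_h b)\cdot(\chi_h b)=\C b\cdot b$ and the phase-inclusion bounds $\chi_h\beta_h\cdot b\geq -\C b\cdot b$, one obtains—after testing against arbitrary non-negative $\phi$—the pointwise bound $|\beta\cdot b+\alpha_-\chi|\leq\C b\cdot b-\alpha_-$ on $\{|\chi|<1\}$. Together with the trivial phase bounds where $|\chi|=1$, this gives $y(x)\in\bar\setD_\loc$ a.e., hence $y\in\bar\setD$.

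The main obstacle is the precise choice of $\calA$-quasiconvex quadratic form in the $\liminf$ step and the associated compensated-compactness computation that converts the pointwise constraint $(\alpha_h,\beta_h)\in\setD_\loc$ into the $\alpha_-$-dependent bound in the limit. This step is the algebraic dual of the laminate built in the $\limsup$ construction and relies on the same first-order characterization of $(c^\ast,\nu^\ast)$; executing the details rigorously is the most delicate part. The gluing in the $\limsup$ step, though technical, is by now standard in the multi-dimensional relaxation literature.
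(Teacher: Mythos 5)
Your overall architecture is the right one, and the $\limsup$ half essentially reproduces the paper's construction: the optimal laminate normal $\nu^\ast$ minimizing \eqref{eq:def_alpha_minus}, volume fraction $\lambda=(1+\mu)/2$, the jump $\hat z=(c^\ast\odot\nu^\ast,\C(c^\ast\odot\nu^\ast-b))$ with $\hat\sigma\cdot b=-\alpha_-$, and the verification that the endpoints land in $\setD^\pm_\loc$ exactly when $|\sigma\cdot b+\alpha_-\mu|\le \C b\cdot b-\alpha_-$ — this is Lemma~\ref{le:rank_one_connection} verbatim. One caveat: "standard gluing via a smooth partition of unity" is not quite enough for the stress component, since interpolating $\sigma$ directly destroys $\Div\sigma=f$; the paper performs the cutoff at the level of a stress potential $\sigma=\Div\Div\apot$ (Lemmas~\ref{lemmalamination}--\ref{lemmaseqball2}), and some such device is genuinely needed.

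The genuine gap is the $\liminf$ half, which you explicitly defer ("the precise choice of $\calA$-quasiconvex quadratic form \dots is the most delicate part"). This is not a detail: it is the step that actually pins down the constant $\alpha_-$ in $\bar\setD_\loc$ and hence the content of the theorem. Two specific points would defeat the sketch as written. First, no single purely quadratic form nonnegative on $\Lambda\cap L$ and "aligned with $(c^\ast,\nu^\ast)$" can do the job: by \eqref{eq:separate_Lambda_1} such a form must be nonnegative on the whole two-parameter cone $\{-(\sigma\cdot b+\alpha_-\mu)(\sigma\cdot b+\alpha_+\mu)\ge 0\}$, so it vanishes on the extremal rays and changes sign across them, and the excluded region $U$ accumulates on lines parallel to the $\alpha_-$-ray. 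The paper therefore needs a one-parameter \emph{family} of affine-quadratic separators $f_{\eps_0,\sigma_0}=Q(\cdot-z_*)+\delta_{\eps_0,\sigma_0}(1-\mu)$, with $Q$ built from \emph{both} $\alpha_-$ and $\alpha_+$, centered at boundary points $z_*\in\setD^+_\loc$, applied for a countable dense set of excluded points, and then repeated with $(y,z)\mapsto(-y,-z)$ to get the two-sided bound (Lemmas~\ref{le:Lambda_cap_L}, \ref{le:separate_on_L_cap_Lambda}, \ref{le:separate_quadratic} and the proof of Theorem~\ref{theotwowellllwerb}); the linear term is what makes the separator strictly positive at the excluded point while remaining $\le 0$ on all of $\bar\setD_\loc$. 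Second, your fallback "trivial phase bounds where $|\chi|=1$" is not trivial: the constraint $\chi_h\,\beta_h\cdot b\ge-\C b\cdot b$ is a product of two merely weakly convergent sequences and cannot be passed to the limit without the same compensated-compactness machinery (in one dimension this worked only because $\sigma_h$ was constant). Until the separating functions are constructed and the limit passage carried out, the closedness of $\bar\setD\times\setE$ under $\topoD$-convergence — and with it the $K(\topoD)$-limit identity — remains unproved.
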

\begin{proof}
The assertion follows from Theorem \ref{theoub} and Theorem \ref{theotwowellllwerb} below.
\end{proof}

It is no coincidence that the relaxed set $\overline \setD_\loc$ can be described in terms of the two parameters
$\sigma \cdot b$ and $\mu = (\C^{-1} \sigma - \eps) \cdot b/|b|^2$. Indeed on the one hand $\setD_\loc$ (and hence $\overline \setD_\loc$) is contained
in the linear subspace 
\begin{equation} \label{eq:span_Dloc}
L := \Span \setD_\loc = \{ (\C^{-1} \sigma + \mu b , \sigma) : \sigma \in \R^{n \times n}_\sym, \, \mu \in \R \}.
\end{equation}
On the other hand $\setD_\loc$ (and hence $\overline \setD_\loc$) is invariant under translations by elements of the linear subspace
\begin{equation} \label{eq:inner_space_Dloc}
L' := \{  (\C^{-1} \sigma, \sigma) : \sigma \cdot b = 0\} \subset L. 
\end{equation}
The quotient $L/ L'$ is two-dimensional and described by the parameters $\sigma \cdot b$ and $\mu$. 
A sketch of the set $\overline \setD_\loc$ in  the $(\sigma \cdot b, \mu)$ plane is given in Fig.~\ref{fi:separation}.

\medskip

Also note that
\begin{equation} \label{eq:bound_alpha_minus}
 \alpha_- < \C b \cdot b
\end{equation}
and thus $\overline \setD_\loc$ always has non-empty interior in $L$. 
Indeed clearly $\alpha_- \le \C b \cdot b$. If equality holds then the function
$f(s) = \C (b - s c \odot \nu) \cdot (b - s c \odot \nu)$ has a minimum at $s=0$
and thus $\C b \cdot (c \odot \nu) = 0$ for all $c \in \R^n$ and $\nu \in S^{n-1}$.
Therefore $\C b= 0$ and hence $b=0$, a contradiction.

\begin{remark}[Energy wells of unequal height] \label{re:unequal_height}
Energetically, the set $D_{\rm loc}$ above corresponds to two-wells of equal height, \ie, to the energy $\tilde W(\epsilon) = \min\{ W(\epsilon - a), W(\epsilon-b)\}$. One can also consider two-wells of unequal height, \ie, $\hat W(\epsilon) = \min\{ W(\epsilon - a), W(\epsilon-b) + w\}$ for some $w \in \R$. This corresponds to the set
\begin{align}
   \hat{\setD}_{\rm loc}
   =&
       \{(\epsilon,\mathbb{C} (\epsilon - a): \eps\in\R^{n\times n}_\sym \text{ with } W(\epsilon - a) \le W(\epsilon - b) + w\} \\
       &\cup
       \{(\epsilon, \mathbb{C} (\epsilon - b): \eps\in\R^{n\times n}_\sym \text{ with }
      W(\epsilon - b) +w \le W(\epsilon - a) \}.
\notag
\end{align}
This situation can be reduced to the case of wells of equal height by a translation in $(\epsilon, \sigma)$ space. Indeed, if we set
\begin{equation}
 \lambda = \frac{w}{(b-a) \cdot \mathbb{C}(b-a)}
\end{equation}
then
\begin{equation}
 W(\epsilon + \lambda (b-a) -a) < W(\epsilon + \lambda(b-a) -b) + w \, \,  \Longleftrightarrow \, \,
W(\epsilon -a) < W(\epsilon -b)
\end{equation}
and thus
\begin{equation}
 \hat{\setD}_{\rm loc} = \setD_{\rm loc} + (\lambda (b-a), \lambda \mathbb{C}(b-a)).
\end{equation}
Therefore, there is no loss of generality in considering wells of equal height.
\end{remark}

 \bigskip

\begin{example}[Compatible wells]
We have $\alpha_- = 0$ if an only if the wells are compatible, \ie, if there exist $c \in \R^n$ and $\nu \in S^{n-1}$ such that $b = c \odot \nu$. 
In this case $\overline \setD_\loc$ has the same flag shaped form as in the one dimensional case,  cf.~Fig.~\ref{Voeb3e}b.
More precisely, for $\bar \sigma \in \R^{n \times n}_\sym$ with $|\bar \sigma \cdot b| \le \C b \cdot b$ the intersection
$\overline \setD_\loc  \cap (\R^{n \times n}_\sym \times \{ \bar \sigma\})$ consists of the segment $[\C^{-1} \bar \sigma - b, \C^{-1} \bar \sigma + b] \times \{\bar \sigma\}$
while for $\bar \eps \in   \R^{n \times n}_\sym$ with $\bar \mu := \C \eps \cdot b / \C b \cdot b \in (-2,2)$ the set
$ \overline \setD_\loc  \cap ( \{\bar \eps\} \times \R^{n \times n}_\sym)$ consists of the segment
$\{\bar \eps \} \times [ \C \bar \eps + \mu_- \C b, \C \bar \eps + \mu_+ \C b]$ where
$[\mu_-, \mu_+] = [\bar \mu -1, 1]$ if $\bar \mu \in [0, 2)$ and
$[\mu_-, \mu_+] = [-1, \bar \mu +1]$ if $\bar \mu \in (-2,0]$. Here we denoted the segment between $x$ and $y$ by $[x,y] := \{ \lambda x + (1-\lambda) y : \lambda \in [0,1]\}$.
\end{example}

\begin{example}[Incompatible wells] Consider the case $n=2$, $\C = \Id$. In a suitable orthonormal basis we can assume that
$b = \begin{pmatrix} b_1 & 0 \\ 0 & b_2 \end{pmatrix}$.
If $b_1 b_2 = \det b \le 0$ then $b$ is compatible and $\alpha_-=0$. 
If $b_1 b_2 > 0$ then $\C b \cdot b = b_1^2 + b_2^2$ and $\alpha_- = \min(b_1^2, b_2^2)$ (this follows from 
\eqref{eq:extremal_alpha} below: for $n=2$ we have $\bar \sigma = 0$ or $c$ parallel to $\nu$). 
Assume for definiteness that $0 <  b_1 \le b_2$. If $\eps = \sigma + \mu b$ then
$(\eps - \sigma) \cdot b = \mu |b|^2$. Thus  the condition $|\sigma \cdot b + \alpha_- \mu| \le |b|^2 - \alpha_-$
becomes after multiplication by $|b|^2$ and rearrangement
$$ | b_2^2\,  \sigma \cdot  b+  b_1^2  \, \eps \cdot b| \le |b|^2\,   b_2^2$$
while the condition $|\mu| \le 1$ translates into
$$ | \sigma \cdot b - \eps \cdot b| \le  |b|^2.$$
For $b_1 = b_2$ the set $\overline \setD_\loc  \setminus \setD_\loc$ becomes a rotated square in the 
$(\tr \eps, \tr \sigma)$ plane, while in the vanishing incompatibility limit  $b_1 \downarrow 0$ the set $\overline \setD_\loc$ approaches the
flag shaped form, 
cf.~Fig.~ \ref{fi:incompatible_2well}.
\end{example}

\begin{figure}[ht] 
\begin{tikzpicture}
\draw[->][style=thin] (-5,0) -- (5,0) node[anchor=north]    {$\eps \cdot b$}; 
\draw[->][style=thin] (0,-4) -- (0,4) node[anchor=west]    {$\sigma \cdot b$}; 
\draw[style=thick]  (-5,-3) -- (0,2); 
\draw[style=thick]  (0,-2) -- (5,3);
\draw[style=dashed]  (0,2) -- (3,1);
\draw[style=dashed] (-3,-1) -- (0,-2)  ;
\draw (2,.1) -- (2,-.1)    node[anchor=north] {$\quad  \, \, b_1^2 + b_2^2$};
\draw (-.1, 2) -- (.1, 2)  node[anchor=west] {$b_1^2 + b_2^2$} ;
\end{tikzpicture} 
\caption{The set $\overline \setD_{\loc}$ for the incompatible case  $n=2$, 
$-a=b =\mathop{\mathrm{diag}}(b_1, b_2)$ 
 with $0 <  b_1 \le b_2$ and $\C = \Id$.
The set $\setD_\loc$ corresponds to the two solid half lines. The set $\overline \setD_\loc$
is obtained by adding the  closed region  bounded by the solid and dashed lines.
The dashed lines have slope $-b_1^2/ b_2^2$. In the limit  $b_1 \downarrow 0$  one obtains
again the flag-shaped region which arises in the compatible case $b_1 = 0$. If $b_1 = b_2$ the region 
bounded by the solid and shaded lines becomes a square, rotated by 45 degrees with respect to the axis.}
 \label{fi:incompatible_2well}
\end{figure}


\bigskip

We begin by considering microstructures in the form of simple laminates consisting of two phases, labeled $+$ and $-$. We say that $(\eps_-, \sigma_-)$ and $(\eps_+, \sigma_+)$ are compatible across a normal $\nu \in S^{n-1}$
$$ \eps_+ - \eps_- = c\odot  \nu, \quad (\sigma_+ - \sigma_-) \nu = 0. $$
These conditions enforce compatibility and equilibrium of a piecewise constant map across an  interface with normal $\nu$. 
To express these conditions concisely, we introduce the cone
\begin{equation} \label{eq:lambda_cone}
\Lambda := \{ (\eps, \sigma) \in \R^{n \times n}_\sym \times \R^{n \times n}_\sym : \eps = c \odot \nu, \, \sigma \nu = 0, \, c \in \R^n, \, \nu \in S^{n-1} \}.
\end{equation}
We look for pairs $z_-, z_+ \in \setD_\loc$ with $z_+ - z_- \in \Lambda$.
%
Recall that 
\begin{equation*} 
L := \Span \setD_\loc = \{ (\C^{-1} \sigma + \mu b , \sigma) : \sigma \in \R^{n \times n}_\sym, \, \mu \in \R \}
\end{equation*}
It will be convenient to characterize  $L \cap \Lambda$ first. 
Geometrically, the heart of the matter is that 
the canonical projection $\pi: L \to L/L'$ maps $\Lambda \cap L$ to a convex cone
\begin{equation}  \label{eq:geometric_heart}
 \pi(\Lambda \cap L) = \bigcup_{\alpha \in [\alpha_-, \alpha_+]} \{ (\sigma \cdot b, \mu)  : \sigma \cdot b + \alpha \mu = 0\},
 \end{equation}
see \eqref{eq:separate_Lambda_1} and \eqref{eq:proj_Lambda_cap_L} below.

\begin{lemma} \label{le:Lambda_cap_L}
For each $\nu \in S^{n-1}$, there exists one and only one $\hat c(\nu) \in \R^n$ such that
\begin{equation} \label{eq:eqn_hatc}
\C\big(  \hat c(\nu) \odot \nu - b\big) \nu = 0.
\end{equation}
Moreover, for all $\nu \in S^{n-1}$,
\begin{equation} \label{eq:min_hatc}
\C\big(  \hat c(\nu) \odot \nu - b\big) \cdot \big(  \hat c(\nu) \odot \nu - b\big)
= \min_{c \in \R^n}
\C\big(  c \odot \nu - b\big) \cdot \big(  c \odot \nu - b\big)
\end{equation}
and
\begin{equation} \label{eq:hat_sigma_cdot_b}
\C\big(  \hat c(\nu) \odot \nu - b\big) \cdot b = - \C\big(  \hat c(\nu) \odot \nu - b\big) \cdot \big(  \hat c(\nu) \odot \nu - b\big).
\end{equation}
In particular,
\begin{equation} \label{eq:Lambda_cap_L}
\Lambda \cap L = \R \left\{   \Big( \hat c(\nu) \odot \nu, \, \C\big( \hat c(\nu) \odot \nu - b\big) \Big)  :  \nu \in S^{n-1} \right\}
\end{equation}
The quantity $\alpha_-$, defined in \eqref{eq:def_alpha_minus}, satisfies
\begin{align}  \label{eq:alpha_minus} 
\alpha_- =& \,  \min_{\nu \in S^{n-1}} \C\big(  \hat c(\nu) \odot \nu - b\big) \cdot \big(  \hat c(\nu) \odot \nu - b\big),
\end{align}
Define
\begin{align}
\alpha_+ =& \,  \max_{\nu \in S^{n-1}}  \C\big(  \hat c(\nu) \odot \nu - b\big) \cdot \big(  \hat c(\nu) \odot \nu - b\big).
\label{eq:alpha_+}
\end{align}
Then,
\begin{equation}  \label{eq:separate_Lambda_1}
(\eps, \sigma)  \in \Lambda \cap L  \quad  \Longrightarrow \quad   -(\sigma \cdot b + \alpha_- \mu) (\sigma \cdot b + \alpha_+ \mu)  \ge 0 ,
\end{equation}
where $\mu$ is uniquely defined by the relation $\eps = \C^{-1} \sigma + \mu b.$ Conversely, for each $\alpha \in [\alpha_-, \alpha_+]$  and each $\mu \in \R$ there exists $\sigma \in \R^{n \times n}_\sym$
\begin{equation} \label{eq:proj_Lambda_cap_L}
 (\C^{-1} \sigma +  \mu b, \sigma) \in \Lambda \cap L  \quad \hbox{and} \quad    \sigma \cdot b +   \alpha \mu  = 0.
\end{equation}
Moreover, if $\C (b - \bar c \odot \bar \nu) \cdot (b - \bar c \odot \bar \nu) = \alpha_-$ for $\bar c \in \R^n$ and $\bar \nu \in S^{n-1}$ then
\begin{equation} \label{eq:extremal_alpha}
\bar \sigma \bar \nu = \bar \sigma \bar c = 0 \quad \hbox{where $\bar \sigma = \C (b - \bar c \odot \bar \nu)$.}
\end{equation}
\end{lemma}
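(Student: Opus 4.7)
The plan is to derive every assertion from the quadratic function
\[
h(c,\nu) := \C(c\odot\nu - b)\cdot(c\odot\nu - b), \qquad c,\nu \in \R^n,
\]
and its $c$-Hessian, the symmetric linear map $A(\nu)\colon c\mapsto(\C(c\odot\nu))\nu$. The key opening observation, to be established first, is that for $\nu\in S^{n-1}$ one has $A(\nu)c\cdot c' = \C(c\odot\nu)\cdot(c'\odot\nu)$, so $A(\nu)$ is symmetric positive definite on $\R^n$: indeed $A(\nu)c\cdot c=\C(c\odot\nu)\cdot(c\odot\nu)=0$ forces $c\odot\nu=0$, and $|\nu|=1$ then forces $c=0$. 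The equation \eqref{eq:eqn_hatc} is precisely $A(\nu)\hat c = \C b\,\nu$, which therefore has a unique solution. The identification \eqref{eq:min_hatc} follows because $\hat c(\nu)$ is the critical point of the strictly convex quadratic $h(\cdot,\nu)$. Setting $\hat\sigma := \C(\hat c(\nu)\odot\nu - b)$, the orthogonality \eqref{eq:hat_sigma_cdot_b} is a two-line consequence of $\hat\sigma\nu = 0$: writing $b = \hat c(\nu)\odot\nu -(\hat c(\nu)\odot\nu - b)$, apply $\hat\sigma$ and use $\hat\sigma\cdot(\hat c(\nu)\odot\nu)=\hat\sigma\nu\cdot\hat c(\nu)=0$.

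For the characterization \eqref{eq:Lambda_cap_L} I would take $(\eps,\sigma)\in\Lambda\cap L$, write $\eps = c\odot\nu$, $\sigma\nu=0$, and $\eps = \C^{-1}\sigma + \mu b$; eliminating $\sigma$ produces $A(\nu)c = \mu\,\C b\,\nu$, whence $c = \mu\,\hat c(\nu)$ and $(\eps,\sigma) = \mu(\hat c(\nu)\odot\nu,\,\C(\hat c(\nu)\odot\nu - b))$. The reverse inclusion is immediate. Formula \eqref{eq:alpha_minus} then drops out of \eqref{eq:min_hatc} and the definition of $\alpha_-$. Writing $\alpha(\nu) := \C(\hat c(\nu)\odot\nu - b)\cdot(\hat c(\nu)\odot\nu - b)$, the identity \eqref{eq:hat_sigma_cdot_b} converts the above parametrization into $\sigma\cdot b = -\mu\,\alpha(\nu)$, and substitution yields $\sigma\cdot b + \alpha_\pm\mu = \mu(\alpha_\pm - \alpha(\nu))$, so
\[
(\sigma\cdot b + \alpha_-\mu)(\sigma\cdot b + \alpha_+\mu) = \mu^2(\alpha_- - \alpha(\nu))(\alpha_+ - \alpha(\nu)) \le 0,
\]
which is \eqref{eq:separate_Lambda_1}. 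For \eqref{eq:proj_Lambda_cap_L}, continuity of $\nu\mapsto A(\nu)^{-1}$ makes $\alpha$ continuous on the connected set $S^{n-1}$, so its image is an interval containing $\alpha_-$ and $\alpha_+$; the intermediate value theorem supplies a $\nu$ with $\alpha(\nu)=\alpha$ for each $\alpha\in[\alpha_-,\alpha_+]$, and scaling the corresponding element of $\Lambda\cap L$ by $\mu$ supplies $\sigma$ (the degenerate case $n=1$ reduces to $\alpha_-=\alpha_+$).

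Finally, for \eqref{eq:extremal_alpha} I would combine an envelope-theorem computation with a Lagrange multiplier argument. At a joint minimizer $(\bar c,\bar\nu)$, inner minimization in $c$ forces $\bar c = \hat c(\bar\nu)$, which is exactly $\bar\sigma\bar\nu = 0$ via \eqref{eq:eqn_hatc}. For the remaining constrained minimization of $\alpha$ on $S^{n-1}$, the envelope theorem gives $\nabla_\nu\alpha(\bar\nu) = \partial_\nu h(\bar c,\bar\nu)$, and a short direct computation (using symmetry of $\sigma^*:=\C(c\odot\nu-b)$) shows $\partial_\nu h(c,\nu) = 2\sigma^*c$, which at $(\bar c,\bar\nu)$ equals $-2\bar\sigma\bar c$. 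The spherical Lagrange condition $\bar\sigma\bar c = \lambda\bar\nu$ paired with $\bar\sigma\bar\nu = 0$ forces $\lambda = 0$ upon taking the inner product with $\bar\nu$, giving $\bar\sigma\bar c = 0$.

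The main obstacle I anticipate is not conceptual but bookkeeping: the paper toggles between the sign conventions $\hat\sigma = \C(\hat c\odot\nu - b)$ and $\bar\sigma = \C(b - \bar c\odot\bar\nu)$, and between the parametrizations $\eps = c\odot\nu$ on $\Lambda$ and $\eps = \C^{-1}\sigma + \mu b$ on $L$. Keeping these straight, and in particular correctly deriving the envelope formula $\partial_\nu h = 2\sigma^*c$ (which relies on the symmetry of $\sigma^*$ to convert $\sigma^*\cdot(c\odot w)$ into $(\sigma^*c)\cdot w$), is where a careless calculation could go astray.
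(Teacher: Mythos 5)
Your proposal is correct and follows essentially the same route as the paper: strict convexity of $c\mapsto \C(c\odot\nu-b)\cdot(c\odot\nu-b)$ (equivalently, positive definiteness of your $A(\nu)$, which the paper leaves implicit) for existence/uniqueness and \eqref{eq:min_hatc}, the same algebra for \eqref{eq:hat_sigma_cdot_b}--\eqref{eq:separate_Lambda_1}, the intermediate value theorem for \eqref{eq:proj_Lambda_cap_L}, and a first variation in $\nu$ at the minimizer for \eqref{eq:extremal_alpha}. The only differences are cosmetic: your linear elimination $A(\nu)c=\mu\,\C b\,\nu$ avoids the paper's separate $\mu=0$ case in \eqref{eq:Lambda_cap_L}, and you phrase the $\nu$-variation via the envelope theorem and a spherical Lagrange multiplier where the paper varies $\nu(\theta)=\cos\theta\,\bar\nu+\sin\theta\,\dot\nu$ with $\bar c$ fixed.
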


\begin{proof}
For fixed $\nu \in S^{n-1}$, consider the map $g: \R^n \to \R$ given by
$$
    g(c) = \C\big(  c \odot \nu - b\big) \cdot \big(  c \odot \nu - b\big).
$$
Then, $g$ is strictly convex because $\C > 0$. Hence, $g$ has a unique minimizer
$\hat c(\nu)$ and variations of the form $c(s) = \hat c(\nu) + s \dot c$ show that
$\hat c(\nu)$ is uniquely characterized by the condition
$$
    \forall \dot c \in \R^n \quad \C\big(  \hat c(\nu)  \odot \nu - b\big) \cdot \big(  \dot c \odot  \nu \big).
$$
Since $\C\big(  \hat c(\nu)  \odot \nu - b\big)$ is symmetric, this is equivalent to
$$
    \C\big(  \hat c(\nu)  \odot \nu - b\big) \nu = 0.
$$
This proves the existence and uniqueness of a solution of \eqref{eq:eqn_hatc} as well as  the relation \eqref{eq:min_hatc}. The identity \eqref{eq:hat_sigma_cdot_b} follows since \eqref{eq:eqn_hatc} implies that
$$
    \C\big(  \hat c(\nu)  \odot \nu - b\big) \cdot \hat c(\nu)  \odot \nu  = 0.
$$

We next to prove \eqref{eq:Lambda_cap_L}. The inclusion $\supset$ is easy. Indeed, it follows directly from  $ \eqref{eq:eqn_hatc}$ that $\mu  \Big( \hat c(\nu) \odot \nu, \, \C \big( \hat c(\nu) \odot \nu - b\big)\Big) \in \Lambda$ for any $\mu \in \R$. If we set $\sigma =  \C\big( \hat c(\nu) \odot \nu - b\big)$, then also
$$
\mu \, \big (  \hat c(\nu) \odot \nu, \,  \sigma\big)
= \mu (\C^{-1} \sigma + b, \, \sigma) \in L.
$$

We now show the inclusion $\subset$. Clearly, $0$ belongs to the right hand side of \eqref{eq:Lambda_cap_L}. Thus, let $(\eps, \sigma) \in \Lambda \cap L \setminus \{0\}$. Then, $\eps = c \odot \nu$ and $\sigma \nu= 0$ for some $\nu \in S^{n-1}$ and some $c \in \R^n$ and, in addition, $\sigma = \C( c \odot \nu - \mu b)$. If $\mu = 0$, then the condition $\sigma \nu = 0$ implies that
$$
    0= \sigma \cdot (c \odot \nu) = \C (c \odot \nu ) \cdot (c \odot \nu)
$$
and, thus, $\eps = c \odot \nu = 0$ and $\sigma = 0$, a contradiction. Thus, we may assume that $\mu \ne 0$. Then division by $\mu$ gives
$$
    \C \big (\mu^{-1} c \odot \nu - b\big) \nu = 0.
$$
Since \eqref{eq:eqn_hatc} has a unique solution, we get $c = \mu \hat c(\nu)$ and, thus,
$$
    (\eps, \sigma)
    =
    \mu\,  \Big(\hat c(\mu) \odot \nu,\,   \C \big(\hat c(\nu) \odot \nu - b \big) \Big).
$$
This finishes the proof of  \eqref{eq:Lambda_cap_L}.

The identity \eqref{eq:alpha_minus} follows directly from 
\eqref{eq:min_hatc} and \eqref{eq:def_alpha_minus}.

To prove \eqref{eq:separate_Lambda_1}, let $(\eps, \sigma) \in \Lambda \cap L$. Then, $\sigma= \C (\eps - \mu b)$. If $\mu = 0$, then the identities $\eps = c \odot \nu$ and $\sigma \nu = 0$ yield $ \C (c \odot \nu) \cdot c \odot \nu =0$ and hence $\eps = \sigma = 0$, so that the desired relation holds. If $\mu \ne 0$, we may assume $\mu = 1$ since the assertion is invariant under the scaling  $(\sigma, \mu) \to (\sigma/ \mu', \mu/ \mu')$. By \eqref{eq:Lambda_cap_L}, there exists $\nu \in S^{n-1}$ such that $ \sigma  = \C( \hat c(\nu)  \odot \nu - b)$. Now \eqref{eq:hat_sigma_cdot_b}, \eqref{eq:min_hatc} and the definition of $\alpha_\pm$ give $ \sigma \cdot b  \in [- \alpha_+, -\alpha_-]$, which implies the desired assertion.

To prove \eqref{eq:proj_Lambda_cap_L} let
$$
    \hat \alpha(\nu)
    :=
    \min_{c \in \R^n}  \C\big(  c \odot \nu - b\big) \cdot \big(  c \odot \nu - b\big)
=  \C\big(  \hat c(\nu) \odot \nu - b\big) \cdot \big(  \hat c(\nu) \odot \nu - b\big).
$$
From the first identity one easily deduces that $\hat \alpha$ is continuous. Thus, by the intermediate value theorem for each $\alpha \in [\alpha_-, \alpha_+]$ there exists $\nu \in S^{n-1}$ such that $\hat \alpha(\nu) = \alpha$. Let $\sigma =  \mu \C\big(  \hat c(\nu)  \odot \nu - b\big)$ and $\eps = \mu \hat c(\nu)  \odot \nu$. Then, by \eqref{eq:Lambda_cap_L} we have $(\eps, \sigma) \in \Lambda \cap L$ and using \eqref{eq:hat_sigma_cdot_b} we get
$$
    - \sigma \cdot b = \mu \,  \hat \alpha(\nu) =\mu  \alpha.
$$

Finally, we prove \eqref{eq:extremal_alpha}. The condition $\bar \sigma \bar \nu = 0$  follows by minimizing in $c$ for fixed $\bar \nu$ and \eqref{eq:eqn_hatc} in combination with \eqref{eq:min_hatc}. To get the other condition, let $\dot \nu \in S^{n-1}$ with  $\dot \nu \cdot \bar  \nu = 0$ and consider variations $\nu(\theta) = \cos \theta\,  \nu + \sin \theta \, \dot \nu$. This gives $\bar \sigma \cdot (\bar c \odot \dot \nu )= 0$ and thus $\bar \sigma \bar c = \beta  \bar \nu$. The assertion follows since $\bar \nu \cdot (\bar \sigma \bar c) = \bar c \cdot \bar \sigma \bar \nu  =0$. This finishes the proof of the lemma.
\end{proof}

\begin{lemma}  \label{le:rank_one_connection} Let $z = (\eps, \sigma) \in \overline \setD_\loc \setminus \setD_\loc$. Then, there exist $z_- = (\eps_-, \sigma_-) \in \setD^-_\loc$, $z_+ = (\eps_+, \sigma_+)\in  \setD^+_\loc$ and $\lambda \in (0,1)$ such that
\begin{equation} \label{eq:laminate_compatible}
z_+ - z_- \in \Lambda  \quad  \hbox{and} \quad z = \lambda z_- + (1-\lambda) z_+.
\end{equation}
Moreover,\begin{equation}
|z_\pm| \le C (|z| + 1).  \label{eq:laminate_bound}
\end{equation}
\end{lemma}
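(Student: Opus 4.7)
My plan is to construct an explicit convex combination of the form $z = \lambda z_- + (1-\lambda) z_+$ with $z_\pm \in \setD^\pm_\loc$ and $z_+ - z_- \in \Lambda$ by choosing the jump $\tau := \sigma_+ - \sigma_-$ from $\Lambda \cap L$ via the parametrization supplied by Lemma~\ref{le:Lambda_cap_L}.

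First I write $z = (\C^{-1}\sigma + \mu b, \sigma) \in \overline\setD_\loc \setminus \setD_\loc$ with $\mu \in (-1,1)$ and $|\sigma \cdot b + \alpha_- \mu| \le \C b \cdot b - \alpha_-$, and set $\lambda = (1-\mu)/2 \in (0,1)$. If I seek $z_\pm = (\C^{-1}\sigma_\pm \pm b, \sigma_\pm) \in \setD_\loc^\pm$ satisfying $\sigma = \lambda \sigma_- + (1-\lambda)\sigma_+$, then the first component of $\lambda z_- + (1-\lambda) z_+$ equals $\C^{-1}\sigma + (1-2\lambda) b = \C^{-1}\sigma + \mu b = \eps$ automatically, so I only need to control the $\sigma$'s. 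The rank-one condition $z_+ - z_- \in \Lambda$ then reads $(\C^{-1}\tau + 2b, \tau) \in \Lambda \cap L$ for $\tau := \sigma_+ - \sigma_-$, and by \eqref{eq:proj_Lambda_cap_L} applied with parameters $\mu = 2$ and $\alpha = \alpha_-$ I obtain such a $\tau$, which additionally satisfies $\tau \cdot b = -2\alpha_-$. This $\tau$ depends only on $\C$ and $b$, so $|\tau| \le C_0$ is a universal constant.

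With $\tau$ fixed I define $\sigma_+ := \sigma + \lambda \tau$ and $\sigma_- := \sigma - (1-\lambda)\tau$, which give $\sigma_+ - \sigma_- = \tau$ and $\lambda \sigma_- + (1-\lambda)\sigma_+ = \sigma$. A direct computation using $2\lambda = 1-\mu$ and $\tau \cdot b = -2\alpha_-$ yields $\sigma_+ \cdot b = \sigma \cdot b + \alpha_- \mu - \alpha_-$ and $\sigma_- \cdot b = \sigma \cdot b + \alpha_- \mu + \alpha_-$, so the hypothesis $|\sigma \cdot b + \alpha_- \mu| \le \C b \cdot b - \alpha_-$ translates precisely into $\sigma_+ \cdot b \ge -\C b \cdot b$ and $\sigma_- \cdot b \le \C b \cdot b$, the defining inequalities of $\setD_\loc^\pm$. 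Hence $z_\pm \in \setD^\pm_\loc$ as required. The bound $|z_\pm| \le C(|z|+1)$ follows immediately from $|\sigma_\pm| \le |\sigma| + C_0$ together with $|\eps_\pm| = |\C^{-1}\sigma_\pm \pm b| \le C(|\sigma|+1)$.

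The only nontrivial input is the correct value of the parameter in \eqref{eq:proj_Lambda_cap_L}, and the choice $\alpha = \alpha_-$ is essentially forced by the way $\overline\setD_\loc$ is defined. With this choice all remaining steps are elementary bookkeeping, so I do not foresee a genuine obstacle.
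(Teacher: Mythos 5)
Your proof is correct and follows essentially the same route as the paper's: the tensor $\tau$ you extract from \eqref{eq:proj_Lambda_cap_L} with parameters $(\mu,\alpha)=(2,\alpha_-)$ is exactly twice the element $\hat z$ of $\Lambda\cap L$ used in the paper (the one with $\hat\sigma\cdot b=-\alpha_-$), and your $\sigma_\pm=\sigma+\lambda\tau$, $\sigma-(1-\lambda)\tau$ coincide with the paper's $z_\pm=z+(1-\mu)\hat z$, $z+(-1-\mu)\hat z$. The verification of the defining inequalities of $\setD^\pm_\loc$ and the bound \eqref{eq:laminate_bound} are identical, so there is nothing to add.
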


\begin{proof}
It follows from the definition of $\overline \setD_\loc$ that
\begin{equation}  \label{eq:condition_eps_laminate}
 \eps =\C^{-1} \sigma + \mu b \quad \hbox{with} \quad  \mu \in (-1,1)
\end{equation}
and
\begin{equation} \label{eq:condition_sigma_laminate}
|  \sigma \cdot b + \mu \alpha_- | \le \C b \cdot b - \alpha_-.
\end{equation}
By Lemma \ref{le:Lambda_cap_L}, there exists $\nu \in S^{n-1}$ and $c = \hat c(\nu)$ such that
\begin{equation}   \label{eq:optimal_Lambda_conn}
\hat z =  (\hat \eps, \hat \sigma) := \Big (c  \odot  \nu, \, \C( c \odot  \nu - b)\Big) \in \Lambda
\quad \hbox{and} \quad
\hat \sigma \cdot b = -\alpha_-.
\end{equation}
Set
$$
    z_+ = z + (1- \mu) \hat z, \quad z_- = z  + (-1 - \mu) \hat z, \quad \lambda = \frac{1+\mu}{2}.
$$
Then, $\lambda \in(0,1)$, $z = \lambda z_- + (1-\lambda) z_+$ and $z_+ -z_- = 2 \hat z \in \Lambda$.

It only remains to show that $z_+ \in \setD^+_\loc$ and $z_- \in \setD^-_\loc$. We have $\hat \eps - \C^{-1} \hat \sigma = b$ and together with \eqref{eq:condition_eps_laminate} this gives  $\eps_+ - \C^{-1} \sigma_+ = b$. Using first  the last identity in \eqref{eq:optimal_Lambda_conn} and then  \eqref{eq:condition_sigma_laminate}, we get
$$
    \sigma_+ \cdot b  = \sigma \cdot b  - (1-\mu) \alpha_-   = \sigma \cdot b + \mu  \alpha _-  - \alpha_- \ge -\C b \cdot b
$$
and hence $z_+ \in \setD^+_\loc$. Similarly, one shows that $z_- \in \setD^-_\loc$. To show  \eqref{eq:laminate_bound} it suffices to note that $|z_\pm - z| \le 2 |\hat z|  \le C$ where $C$ depends only on $b$ and $\C$.
\end{proof}

We proceed to show that $\bar{\setD}$ is indeed the relaxation of $\setD$. We first show that every element in $\overline \setD \times \setE$ can be approximated in the sense of data convergence by elements of $\setD \times \setE$. Lemma \ref{le:rank_one_connection} is the key ingredient for the case of constant limit  maps. For the general case, we will then use a suitable covering and gluing argument.

\def\Rvierst{\R^{n^4}_*}
\def\apot{\varphi}
\def\bpot{\psi}

Such gluing constructions for vector fields that obey differential constraints are much easier if one works with the corresponding potentials. For the strain one uses the displacement field as potential. Analogously, for the stress one uses a stress potential $\apot$, which is related to the field $\sigma$ by $\sigma=\Div\Div \apot$.

Let $\Rvierst$ be the set of $\xi\in \R^{n\times n\times n\times n}$ such that
\begin{equation}\label{eqdefRvierst}
 \xi_{ijhk}=\xi_{jikh}=-\xi_{ihjk} \hskip5mm \text{ for all } i,j,k,h\in\{1, 2, \dots, n\}.
\end{equation}
For $\apot\in L^1_\loc(\R^n;\Rvierst)$, we define the distribution
\begin{equation}
 (\Div\Div \apot)_{ij} =  \partial_{h}\partial_{k} \apot_{ijhk},
\end{equation}
where a sum over the repeated indices $h$ and $k$ is implied. We observe that by (\ref{eqdefRvierst}) it follows that $\Div (\Div \Div \apot)=0$ and $\Div \Div \apot=(\Div \Div \apot )^T$. For $M\in\R^{n\times n}_\sym$ we define $\apot^M:\R^n\to\Rvierst$ by
\begin{equation}\label{eqdefaM}
\begin{split}
 \apot^M(x)_{ijhk}=\frac1{n(n-1)} &\bigl( M_{ij}x_hx_k+M_{hk}x_ix_j
 - M_{ih}x_jx_k - M_{kj}x_hx_i \bigr).
\end{split}
\end{equation}
A straightforward computation shows that $\Div\Div \apot^M=M$, with $|\apot^M|(x)\le  2|x|^2|M|$, $|D\apot^M|(x)\le 4|x|\, |M|$, $|D^2\apot^M|(x)\le 4|M|$ for all $x\in\R^n$, $n\ge 2$.

We start by constructing a microstructure in $\R^n$ (Lemma \ref{lemmalamination}), and then truncating to a ball (Lemma \ref{lemmaseqball}).

\begin{lemma}\label{lemmalamination}
Let $\eps_A, \eps_B, \sigma_A, \sigma_B\in\R^{n\times n}_\sym$, $\nu\in S^{n-1}$, $c\in\R^n$ be such that
\begin{equation}
    \eps_A-\eps_B=c\odot\nu \hskip5mm\text{ and }\hskip5mm (\sigma_A-\sigma_B)\nu=0.
\end{equation}
Let $\lambda\in (0,1)$. Then for any $h\in\N$ there are functions $u^h\in W^{1,\infty}_\loc(\R^n;\R^n)$ and $\apot^h\in W^{2,\infty}_\loc(\R^n;\Rvierst)$ such that
\begin{equation}
 e(u^h)=\eps_A \text{ and }\Div\Div \apot^h=\sigma_A \text{ \ae\ in the set } \{x: h x\cdot \nu \in \Z+(0,\lambda)\}
\end{equation}
and
\begin{equation}
 e(u^h)=\eps_B \text{ and }\Div\Div \apot^h=\sigma_B \text{ \ae\ in the set } \{x:  h x\cdot \nu \in \Z+(\lambda,1)\},
\end{equation}
with
$\|Du^h\|_{L^\infty}\le |\eps_A|+|\eps_B|$, $\|D^2\apot^h\|_{L^\infty}\le 2( |\sigma_A|+|\sigma_B|)$,
$\|u^h-\bar u\|_{L^\infty}\le |\eps_A-\eps_B|/h$ and $\|\apot^h-\bar \apot\|_{L^\infty}\le |\sigma_A-\sigma_B|/h^2$,
where
\begin{equation}
\bar u(x)=(\lambda \eps_A+(1-\lambda)\eps_B)x, \hskip5mm\text{ and }\hskip5mm
\bar \apot=\apot^{\lambda \sigma_A+(1-\lambda)\sigma_B},
\end{equation}
as in (\ref{eqdefaM}). The constant $C$ depends only on $\eps_A, \eps_B, \sigma_A, \sigma_B$, $\nu$.
\end{lemma}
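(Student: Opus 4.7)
The construction is a standard rank-one laminate: both $u^h$ and $\apot^h$ are obtained from the reference maps $\bar u$ and $\bar\apot$ by superposing a one-dimensional oscillation in the direction $\nu$ with period $1/h$. Let $v:\R\to\R$ be the continuous 1-periodic function with $v(0)=0$, $v'=1-\lambda$ on $(0,\lambda)$, $v'=-\lambda$ on $(\lambda,1)$, and let $w:\R\to\R$ be a bounded 1-periodic $C^1$ function satisfying $w''=v'$ in the distributional sense; then $v,w$ and the bounded derivatives $v',w',w''$ are controlled in $L^\infty$ by constants depending only on $\lambda$.

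For the displacement, set
\begin{equation*}
    u^h(x) := \bar u(x) + \tfrac{1}{h}\,v(h\,x\cdot\nu)\,c.
\end{equation*}
Differentiating yields $\nabla u^h = \lambda\eps_A + (1-\lambda)\eps_B + v'(h\,x\cdot\nu)\,c\otimes\nu$; symmetrising and substituting $c\odot\nu = \eps_A - \eps_B$ gives $e(u^h)=\eps_A$ on $\{h\,x\cdot\nu\in\Z+(0,\lambda)\}$ and $e(u^h)=\eps_B$ on the complementary set. The claimed $L^\infty$ bounds on $u^h-\bar u$ and $\nabla u^h$ follow immediately from the uniform bounds on $v,v'$ and the estimate $|c|\le\sqrt{2}\,|\eps_A-\eps_B|$.

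For the stress potential, the key step is algebraic. Writing $S := \sigma_A - \sigma_B$, the equilibrium jump condition reads $S\nu=0$. Define the constant tensor
\begin{equation*}
    \Phi_{ijhk} := S_{ij}\nu_h\nu_k + S_{hk}\nu_i\nu_j - S_{ih}\nu_j\nu_k - S_{kj}\nu_i\nu_h.
\end{equation*}
A direct check using only the symmetry of $S$ verifies the two relations $\Phi_{ijhk}=\Phi_{jikh}=-\Phi_{ihjk}$ of (\ref{eqdefRvierst}), so that $\Phi\in\Rvierst$. Contracting against $\nu_h\nu_k$ and using $|\nu|=1$ together with $S\nu=0$ and $\nu\cdot S\nu=0$ (three of the four summands disappear), one finds $\Phi_{ijhk}\nu_h\nu_k=S_{ij}$. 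Now set
\begin{equation*}
    \apot^h(x) := \bar\apot(x) + h^{-2}\,w(h\,x\cdot\nu)\,\Phi.
\end{equation*}
The chain rule gives $\partial_h\partial_k[w(h\,x\cdot\nu)\,\Phi_{ijhk}]=h^{2}\,w''(h\,x\cdot\nu)\,\Phi_{ijhk}\nu_h\nu_k$, so that
\begin{equation*}
    (\Div\Div\apot^h)_{ij}
    = \lambda\sigma_{A,ij} + (1-\lambda)\sigma_{B,ij} + w''(h\,x\cdot\nu)\,S_{ij},
\end{equation*}
which equals $\sigma_A$ on the first layer set and $\sigma_B$ on the second. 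The remaining $L^\infty$ bounds on $\apot^h-\bar\apot$, $D\apot^h$ and $D^2\apot^h$ reduce to the uniform bounds on $w,w',w''$ together with the polynomial estimates on $\bar\apot$ recorded right after (\ref{eqdefaM}).

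The only nontrivial step is the construction of $\Phi$: one must realise an arbitrary symmetric tensor $S$ annihilating $\nu$ as the partial contraction $\Phi_{ijhk}\nu_h\nu_k$ of a constant tensor with the curvature-type symmetries of $\Rvierst$. The four-term formula above is tailored so that exactly three of the four contractions against $\nu\otimes\nu$ vanish by $S\nu=0$, while still respecting both symmetries; everything else is routine laminate bookkeeping.
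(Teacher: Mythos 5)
Your proposal is correct and follows essentially the same route as the paper's proof: the same one-periodic sawtooth profile modulating the rank-one direction $c\otimes\nu$ for the displacement, and the identical four-term tensor $\Phi$ (the paper's $\xi$) realizing $S=\sigma_A-\sigma_B$ as $\Phi_{ijhk}\nu_h\nu_k$ for the stress potential. Your insistence that the second antiderivative $w$ be chosen bounded and periodic is a slightly cleaner formulation of the paper's ``primitive of $f$ with average $0$'', but the construction and estimates are otherwise the same.
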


\begin{proof}
We define $\bar\eps=\lambda \eps_A+(1-\lambda)\eps_B$ and $\bar\sigma=\lambda \sigma_A+(1-\lambda)\sigma_B$.

The construction of $u^h$ is standard, see for example \cite{Dacorogna1989,MuellerLectureNotes}. Indeed, it suffices to let $f:\R\to\R$ be the Lipschitz, 1-periodic function such that $f(0)=0$, and $f'(x)=1-\lambda$ on $(0,\lambda)$, $f'(x)=-\lambda$ on $(\lambda,1)$, and set
\begin{equation}
u^h(x) = \bar\eps x + c \frac1h f(hx\cdot \nu).
\end{equation}
Then $\|f\|_{L^\infty}\le1$, $\|f'\|_{L^\infty}\le 1$ and $e(u^h)(x)=\bar\eps + (\eps_A-\eps_B) f'(hx\cdot\nu)$ give the result.

In order to construct $\apot^h$, we start by showing that there exists a matrix $\xi\in\Rvierst$, such that
\begin{equation}\label{eqbsigma}
\xi_{ijhk}\nu_h\nu_k = (\sigma_A-\sigma_B)_{ij}.
\end{equation}
or, equivalently, $\xi(\nu\otimes \nu)=\sigma_A-\sigma_B$. Indeed, it suffices to take
\begin{equation}
 \xi_{ijhk}=\hat\sigma_{ij}\nu_h\nu_k + \hat\sigma_{hk}\nu_i\nu_j - \hat\sigma_{ih}\nu_j\nu_k -\hat\sigma_{jk}\nu_i\nu_h ,
\end{equation}
where $\hat\sigma=\sigma_A-\sigma_B$. From $\hat\sigma^T=\hat\sigma$ (\ref{eqdefRvierst}) follows, from $\hat\sigma\nu=0$ (\ref{eqbsigma}) follows. It is also clear that $|\xi|\le 4|\sigma_A-\sigma_B|$.

We set
\begin{equation}
  \apot^h_{ijhk}(x) =
  \bar \apot_{ijhk}(x)
  + \xi_{ijhk} \frac1{h^2} F(hx\cdot \nu) ,
\end{equation}
where $F$ is a primitive of $f$ with average 0 over one period, and compute $\|D^2\apot^h\|_{L^\infty} \le |\bar\sigma|+|\xi|$, $\|\apot^h-\bar \apot\|_{L^\infty} \le |\xi|/{h^2}$,
\begin{equation}
\Div\Div \apot^h = \bar\sigma + (\sigma_A-\sigma_B) f'(hx\cdot \nu).
\end{equation}
Inserting the definition of $\bar\sigma$ and the two values of $f'$ gives the result.
\end{proof}

Given a bounded Lipschitz set $\omega\subset\R^n$, we write
\begin{equation}
\begin{split}
    \setE_0(\omega)
    = &
    \{ (\eps,\sigma)\in L^2(\omega;\R^{n\times n}_\sym\times \R^{n\times n}_\sym):
    \\ &
    \Div\sigma=0 \text{ and } \exists u \in H^1(\omega;\R^n): \eps=e(u)\}.
\end{split}
\end{equation}

\begin{lemma}\label{lemmaseqball}
Let $r>0$, $(\bar\eps,\bar\sigma)\in \overline \setD_\loc$. Then, there are sequences $(\alpha_h,\beta_h)\in L^\infty(B_r;\setD_\loc)$ and $(\eps_h,\sigma_h)\in \setE_0(B_r)$ such that
\begin{equation}\label{eqlemmaseqballstr}
\eps_h-\alpha_h \to 0 \hskip5mm\text{and}\hskip5mm \sigma_h-\beta_h\to0 \hskip5mm \text{ strongly in } L^2(B_r),
\end{equation}
\begin{equation}
 \eps_h\weakto\bar\eps  \hskip5mm\text{and}\hskip5mm \sigma_h\weakto\bar\sigma \hskip5mm \text{ weakly in } L^2(B_r),
\end{equation}
with the bounds $\|\eps_h\|_{L^\infty}\le C (|\bar\eps|+1)$, $\|\sigma_h\|_{L^\infty}\le C (|\bar\sigma|+1)$. The constant depends only on $\C$, $a$, $b$ (\ie, on the set $\setD_\loc$). Furthermore, there is  a sequence $\eta_h\to0$, $\eta_h>0$, such that
\begin{equation}\label{eqbwlemmasqball}
\sigma_h=\bar\sigma \text{ and } \eps_h=\bar\eps \text{ on } B_r\setminus B_{(1-\eta_h) r}.
\end{equation}
\end{lemma}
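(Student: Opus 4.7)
\smallskip

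\textbf{Plan of proof.} First consider the trivial case $(\bar\eps,\bar\sigma)\in\setD_\loc$: take $(\alpha_h,\beta_h)=(\eps_h,\sigma_h)$ to be the constant $(\bar\eps,\bar\sigma)$ and $\eta_h=1/h$. Otherwise $(\bar\eps,\bar\sigma)\in\overline\setD_\loc\setminus\setD_\loc$, so Lemma \ref{le:rank_one_connection} produces $z_\pm=(\eps_\pm,\sigma_\pm)\in\setD^\pm_\loc$ and $\lambda\in(0,1)$ with $(\bar\eps,\bar\sigma)=\lambda z_-+(1-\lambda)z_+$, $z_+-z_-=(c\odot\nu,\hat\sigma)\in\Lambda$ (so in particular $\hat\sigma\nu=0$), and $|z_\pm|\le C(|\bar z|+1)$. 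Apply Lemma \ref{lemmalamination} with data $(\eps_A,\sigma_A)=z_+$, $(\eps_B,\sigma_B)=z_-$ and the same $\lambda,\nu$ to obtain potentials $u^h\in W^{1,\infty}_\loc(\R^n;\R^n)$ and $\apot^h\in W^{2,\infty}_\loc(\R^n;\Rvierst)$ satisfying $e(u^h)\in\{\eps_-,\eps_+\}$, $\Div\Div\apot^h\in\{\sigma_-,\sigma_+\}$ pointwise, together with the smallness bounds $\|u^h-\bar u\|_{L^\infty}\le C/h$ and $\|\apot^h-\bar\apot\|_{L^\infty}\le C/h^2$, where $\bar u(x)=\bar\eps x$ and $\bar\apot=\apot^{\bar\sigma}$ from (\ref{eqdefaM}).

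\smallskip

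Next, introduce cutoffs. Pick $\eta_h\downarrow 0$ to be fixed later, and choose $\chi_h\in C_c^\infty(B_r)$ with $\chi_h=1$ on $B_{(1-2\eta_h)r}$, $\chi_h=0$ on $B_r\setminus B_{(1-\eta_h)r}$, and $|D^k\chi_h|\le C(\eta_h r)^{-k}$ for $k=1,2$. Define
\begin{equation*}
u_h:=\chi_h(u^h-\bar u)+\bar u,\qquad \apot_h:=\chi_h(\apot^h-\bar\apot)+\bar\apot,
\end{equation*}
and set $\eps_h:=e(u_h)$, $\sigma_h:=\Div\Div\apot_h$. The antisymmetry conditions (\ref{eqdefRvierst}) imply $\Div\sigma_h=0$, and $\eps_h$ is a symmetrized gradient by construction, so $(\eps_h,\sigma_h)\in\setE_0(B_r)$. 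On the outer annulus $B_r\setminus B_{(1-\eta_h)r}$ we have $\chi_h\equiv 0$, so $\eps_h=\bar\eps$ and $\sigma_h=\bar\sigma$, which gives (\ref{eqbwlemmasqball}). On the inner region $B_{(1-2\eta_h)r}$, where $\chi_h\equiv 1$, we have $\eps_h=e(u^h)$ and $\sigma_h=\Div\Div\apot^h$, which take values in $\setD_\loc$ pointwise.

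\smallskip

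Define $(\alpha_h,\beta_h):=(\eps_h,\sigma_h)$ on $B_{(1-2\eta_h)r}$, and $(\alpha_h,\beta_h):=(\eps_+,\sigma_+)\in\setD_\loc$ on the complement. Then $(\alpha_h,\beta_h)\in L^\infty(B_r;\setD_\loc)$ and $\eps_h-\alpha_h$, $\sigma_h-\beta_h$ are supported in the transition annulus of volume $\le C\eta_h r^n$. On that annulus, expansion of $e(u_h)$ and $\Div\Div\apot_h$ via the Leibniz rule produces the main terms $\chi_h e(u^h)+(1-\chi_h)\bar\eps$ and $\chi_h\Div\Div\apot^h+(1-\chi_h)\bar\sigma$ (both bounded by $C(|\bar z|+1)$) together with correction terms schematically of the form $D\chi_h\otimes(u^h-\bar u)$ and $D\chi_h\otimes D(\apot^h-\bar\apot)+D^2\chi_h\,(\apot^h-\bar\apot)$. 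Using $\|u^h-\bar u\|_{L^\infty}\le C/h$, $\|\apot^h-\bar\apot\|_{L^\infty}\le C/h^2$, $\|D(\apot^h-\bar\apot)\|_{L^\infty}\le C/h$, and the bounds on $D^k\chi_h$, these corrections are bounded in $L^\infty$ by $C/(\eta_h h)+C/(\eta_h h)^2$. Choosing $\eta_h:=h^{-1/2}$ makes all such corrections vanish in $L^\infty$ while keeping $\eta_h\to 0$; moreover multiplying by the annular volume $\eta_h r^n$ yields $\|\eps_h-\alpha_h\|_{L^2}+\|\sigma_h-\beta_h\|_{L^2}\to 0$, establishing (\ref{eqlemmaseqballstr}) and the $L^\infty$ bounds.

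\smallskip

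Finally, weak convergence: on $B_{(1-2\eta_h)r}$, $e(u^h)$ is a periodic oscillation with volume fractions $\lambda,1-\lambda$ between $\eps_-,\eps_+$, hence $e(u^h)\weakto\lambda\eps_-+(1-\lambda)\eps_+=\bar\eps$ in $L^2$ (Riemann--Lebesgue), and similarly $\Div\Div\apot^h\weakto\bar\sigma$; on the complement, $(\eps_h,\sigma_h)$ is uniformly bounded and supported on a set of vanishing measure, so contributes nothing to weak limits. Hence $\eps_h\weakto\bar\eps$ and $\sigma_h\weakto\bar\sigma$. The main technical point, and the step that dictates the choice of $\eta_h$, is the simultaneous control of the cutoff-derivative corrections in $L^\infty$ (so that the $L^\infty$ bounds of the statement hold on the annulus) and in $L^2$ (so that $\eps_h-\alpha_h\to 0$ strongly): this is where the smallness rates $1/h$ and $1/h^2$ for $u^h-\bar u$ and $\apot^h-\bar\apot$ are essential, and the reason we work with potentials rather than directly with the fields.
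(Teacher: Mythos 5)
Your proposal is correct and follows essentially the same route as the paper: the trivial case for $(\bar\eps,\bar\sigma)\in\setD_\loc$, then Lemma \ref{le:rank_one_connection} plus the laminate of Lemma \ref{lemmalamination}, glued to the affine potentials by a cutoff in an annulus whose correction terms are controlled by the $1/h$ and $1/h^2$ smallness of $u^h-\bar u$ and $\apot^h-\bar\apot$. The only (harmless) cosmetic differences are your choice $\eta_h=h^{-1/2}$, which makes the cutoff corrections vanish in $L^\infty$ where the paper (with $\eta_h=1/h$) merely keeps them bounded, and your redefinition of $(\alpha_h,\beta_h)$ as a constant element of $\setD_\loc$ outside the inner ball rather than the global laminate.
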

\begin{proof}
If $(\bar\eps,\bar\sigma)\in \setD_\loc$ then the constant sequences $\alpha_h = \eps_h = \bar\eps$, $\beta_h=\sigma_h=\bar\sigma$ suffices.

Let $z_- = (\eps_A,\sigma_A), z_+= (\eps_B,\sigma_B)\in \setD_\loc$,  $\lambda\in(0,1)$, $c\in\R^n$, $\nu\in S^{n-1}$ be as given by Lemma  \ref{le:rank_one_connection}.
Let $u_h$, $\apot_h$ be the corresponding sequences from Lemma \ref{lemmalamination}.
For any $\eta\in (0,1/2)$, we choose $\theta_\eta\in C^1_c(B_{(1-\eta)r};[0,1])$ such that $\theta_\eta=1$ on $B_{(1-2\eta)r}$ and $\|D\theta_\eta\|_{L^\infty}\le 3/(\eta r)$. We define
\begin{equation}
v_h = u_h\theta_{1/h} + \bar u (1-\theta_{1/h})
\end{equation}
and
\begin{equation}
  \bpot_h = \apot_h\theta_{1/h} + \bar \apot (1-\theta_{1/h}),
\end{equation}
where $\bar u$, $\bar \apot$ are the affine potentials defined in Lemma \ref{lemmalamination}. We then set
\begin{equation}
 \eps_h= e(v_h), \hskip5mm \sigma_h=\Div\Div \bpot_h
\end{equation}
and
\begin{equation}
 \alpha_h= e(u_h), \hskip5mm \beta_h=\Div\Div \apot_h.
\end{equation}
It is easy to see that $(\eps_h,\sigma_h)\in \setE_0(B_r)$ and that they obey (\ref{eqbwlemmasqball}) with $\eta_h=1/h$. By Lemma \ref{lemmalamination}, we also have $(\alpha_h,\beta_h)\in \{(\eps_A, \sigma_A),(\eps_B,\sigma_B)\}\subset \setD_\loc$ almost everywhere. The sequences $(u_h, \apot_h)$ converge uniformly to $(\bar u,\bar \apot)$.

Furthermore,
\begin{equation}
\|\alpha_h\|_{L^\infty}\le \|\eps_h\|_{L^\infty} \le |\eps_A|+|\eps_B| + \|u_h-\bar u\|_{L^\infty} \|D\theta_{1/h}\|_{L^\infty}
\le c (|\eps_A|+|\eps_B|)
\end{equation}
implies that $\eps_h$ and $\alpha_h$ have a weak-* limit in $L^\infty$; since $u_h\to\bar u$ we obtain $\alpha_h\weakstarto \bar \eps$ in $L^\infty$. At the same time, since $\eps_h=\alpha_h$ on $B_{(1-2\eta_h)r}$ we obtain
\begin{equation}
\begin{split}
 \|\eps_h-\alpha_h\|_{L^2(B_r)}^2 &\le |B_r\setminus B_{(1-2\eta_h)r}| (\|\eps_h\|_{L^\infty}+\|\alpha_h\|_{L^\infty} )^2
 \\
 &\le C(|\eps_A|+|\eps_B|)^2 \eta_h |B_r| \le C (|\bar \eps|^2+1) |B_r|/h
 \end{split}
\end{equation}
where in the last step we used \eqref{eq:laminate_bound}. Therefore, $\eps_h-\alpha_h\to0$ strongly in $L^2$ and $\eps_h\weakto \bar\eps$ in $L^2$.

Similarly, from
\begin{equation}
\begin{split}
\|\beta_h\|_{L^\infty}\le \|\sigma_h\|_{L^\infty} &\le |\sigma_A|+|\sigma_B| + \|D\apot_h-D\bar \apot\|_{L^\infty} \|D\theta_{1/h}\|_{L^\infty} \\
&+ \|\apot_h-\bar \apot\|_{L^\infty} \|D^2\theta_{1/h}\|_{L^\infty}
\le c (|\sigma_A|+|\sigma_B|)
\end{split}
\end{equation}
we obtain that $\beta_h$ and $\sigma_h$ have a weak-* limit in $L^\infty$, with $\beta_h \weakstarto \bar\sigma$. Furthermore,
\begin{equation}
\begin{split}
 \|\sigma_h-\beta_h\|_{L^2(B_r)}^2 &\le |B_r\setminus B_{(1-2\eta_h)r}| (\|\sigma_h\|_{L^\infty}+\|\beta_h\|_{L^\infty} )^2\\
 &\le C(|\sigma_A|+|\sigma_B|)^2 \eta_h |B_r|
 \le C (|\bar \sigma|^2+1) |B_r|/h.
\end{split}
\end{equation}
Therefore, $\sigma_h-\beta_h\to0$ strongly in $L^2$.
\end{proof}

The proof of the upper bound proceeds by local modification around Lebesgue points of the fields $\bar\eps$, $\bar\sigma$, as was done in \cite{ContiDolzmann2015}. This sidesteps the need to go through a density argument. We present in Lemma \ref{lemmaseqball2} the construction in a ball around a Lebesgue point, and then in the proof of Theorem \ref{theoub} the covering argument.

We define, given a bounded Lipschitz set $\Omega$ and  $f\in L^{2}(\Omega;\R^n)$,
\begin{equation}
 \begin{split}
  \setE_f=\{& (\eps,\sigma)\in L^2(\Omega;\R^{n\times n}_\sym\times \R^{n\times n}_\sym): \Div\sigma=f \\
  &\text{ and } \exists u \in H^1(\Omega;\R^n): \eps=e(u)\}.
 \end{split}
\end{equation}

\begin{lemma}\label{lemmaseqball2}
Let $(\bar\eps,\bar\sigma)\in L^2(\Omega;\overline {\setD}_\loc)\cap \setE_f$, $B = B_r(x_*)\subset\Omega$ be a ball such that
\begin{equation}
  (\bar\eps(x_*),\bar\sigma(x_*))\in \overline \setD_\loc
\end{equation}
and, for some $\delta\in(0,1)$,
\begin{equation}\label{eqlemmaseqball2deltaass}
   \frac{1}{|B|} \int_{B} |\bar\eps-\bar\eps(x_*)|^2+|\bar\sigma-\bar\sigma(x_*)|^2dx<\delta.
\end{equation}
Then, there are sequences $(\alpha_h,\beta_h)\in L^\infty(B_r;\setD_\loc)$ and $(\eps_h^*,\sigma_h^*)\in \setE_f$ such that $\{\eps_h^*\ne\bar\eps\}\cup \{\sigma_h^*\ne\bar\sigma\}\subset\subset B_r$,
\begin{equation}\label{eqlemmaseglimsup}
\limsup_{h\to\infty}\left(\|\eps_h^*-\alpha_h\|_{L^2(B_r)}^2+\|\sigma_h^*-\beta_h\|_{L^2(B_r)}^2\right)\le \delta |B|,
\end{equation}
\begin{equation}\label{lemmaseqball2weakconv}
 \eps_h^*\weakto\bar\eps  \hskip5mm\text{and}\hskip5mm \sigma_h^*\weakto\bar\sigma \hskip5mm \text{ weakly in } L^2(B_r),
\end{equation}
with the bound
\begin{equation}\label{eqlemmaunifbd}
\limsup_{h\to\infty}\left(\|\eps_h^*\|_{L^2(B_r)}^2+\|\sigma_h^*\|_{L^2(B_r)}^2\right)\le  C (\|\bar\eps\|_{L^2(B_r)}^2+\|\bar\sigma\|_{L^2(B_r)}^2+|B_r|).
\end{equation}
The constant $C$ depends only on the set $\setD_\loc$.
\end{lemma}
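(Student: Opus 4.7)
The plan is to apply Lemma \ref{lemmaseqball} at the constant state $(\bar\eps(x_*),\bar\sigma(x_*))\in\overline\setD_\loc$ on the ball $B=B_r(x_*)$, producing sequences $(\eps_h,\sigma_h)\in\setE_0(B)$ and $(\alpha_h,\beta_h)\in L^\infty(B;\setD_\loc)$ that oscillate around this constant, agree with it on the annulus $B\setminus B_{(1-\eta_h)r}$ by (\ref{eqbwlemmasqball}), obey the $L^\infty$ bound of Lemma \ref{lemmaseqball}, and satisfy $\eps_h-\alpha_h\to 0$, $\sigma_h-\beta_h\to 0$ strongly in $L^2(B)$. I then graft the oscillation onto $(\bar\eps,\bar\sigma)$ by superposition,
\begin{equation*}
\eps_h^* := \bar\eps + (\eps_h - \bar\eps(x_*)), \qquad \sigma_h^* := \bar\sigma + (\sigma_h - \bar\sigma(x_*)),
\end{equation*}
and reuse the very same $(\alpha_h,\beta_h)$ as the $\setD_\loc$-valued approximant.

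To place $(\eps_h^*,\sigma_h^*)$ in $\setE_f$ with $\{\eps_h^*\ne\bar\eps\}\cup\{\sigma_h^*\ne\bar\sigma\}\subset\subset B$, I note that both perturbations vanish on $B\setminus B_{(1-\eta_h)r}$. For the stress, $\sigma_h-\bar\sigma(x_*)$ is divergence-free on $B$ and zero near $\partial B$, so its extension by zero to $\Omega$ remains divergence-free, giving $\Div\sigma_h^*=f$. For the strain, let $u_h\in H^1(B;\R^n)$ satisfy $e(u_h)=\eps_h$; then $e(u_h-\bar\eps(x_*)\,x)\equiv 0$ on the connected annulus, hence equals a rigid motion $R_h$ there, and after replacing $u_h$ by $u_h-R_h$ the function $u_h-\bar\eps(x_*)\,x$ is compactly supported in $B$. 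Adding its zero extension to any potential of $\bar\eps$ on $\Omega$ produces an $H^1$ displacement whose symmetrized gradient is $\eps_h^*$.

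For the quantitative bound (\ref{eqlemmaseglimsup}) I would expand
\begin{equation*}
\|\eps_h^*-\alpha_h\|_{L^2(B)}^2 = \|\bar\eps-\bar\eps(x_*)\|_{L^2(B)}^2 + 2\int_B(\bar\eps-\bar\eps(x_*))\cdot(\eps_h-\alpha_h)\,dx + \|\eps_h-\alpha_h\|_{L^2(B)}^2,
\end{equation*}
where the last two terms vanish as $h\to\infty$ by (\ref{eqlemmaseqballstr}); combined with the analogous computation for $\sigma$ and the Lebesgue-point hypothesis (\ref{eqlemmaseqball2deltaass}), this yields the desired $\delta|B|$ limsup. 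Weak convergence (\ref{lemmaseqball2weakconv}) is immediate, since $\eps_h-\bar\eps(x_*)\weakto 0$ and $\sigma_h-\bar\sigma(x_*)\weakto 0$ in $L^2(B)$ by Lemma \ref{lemmaseqball}. For (\ref{eqlemmaunifbd}) I would combine the uniform $L^\infty$ bound on $(\eps_h,\sigma_h)$ with the elementary estimate $|\bar\eps(x_*)|^2|B|\le 2\|\bar\eps\|_{L^2(B)}^2+2\delta|B|$ (and the stress analogue), obtained by writing $\bar\eps(x_*)=\bar\eps(x)-(\bar\eps(x)-\bar\eps(x_*))$ and averaging over $B$.

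The only delicate step is the rigid-motion adjustment that makes the strain perturbation compactly supported in $B$: without it, zero-extending the displacement would introduce a spurious jump across the annulus and destroy the symmetrized-gradient structure on $\Omega$. Once this is handled, everything else is a clean transcription of the strong and weak convergence supplied by Lemma \ref{lemmaseqball}.
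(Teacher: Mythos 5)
Your proposal is correct and follows essentially the same route as the paper: apply Lemma \ref{lemmaseqball} at the Lebesgue-point values, superpose the oscillation onto $(\bar\eps,\bar\sigma)$, keep the same $(\alpha_h,\beta_h)$, and use (\ref{eqbwlemmasqball}) to glue across the annulus. The only cosmetic difference is that the paper verifies the compactly supported displacement perturbation directly from the explicit potential $v_h$ of the construction (which equals the affine map near $\partial B$), whereas you recover this via a rigid-motion correction on the connected annulus; both arguments are valid.
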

\begin{proof}
Let $(\eps_h,\sigma_h,\alpha_h,\beta_h)$ be the sequences from Lemma \ref{lemmaseqball}, applied on the ball $B_r$ with the matrices $\bar\eps(x_*)$, $\bar\sigma(x_*)$. We define
\begin{equation}
  \eps_h^*=\bar\eps + \eps_h-\bar\eps(x_*) \hskip5mm\text{ and }\hskip5mm
  \sigma_h^*=\bar\sigma + \sigma_h-\bar\sigma(x_*) \hskip5mm \text{ in $B_r$,}
\end{equation}
and $\eps_h^*=\bar\eps$, $\sigma_h^*=\bar\sigma$ in $\Omega\setminus B_r$. We claim that the sequences $(\eps_h^*,\sigma_h^*,\alpha_h,\beta_h)$ have the required properties. Indeed, $\Div \sigma_h^*=\Div\bar\sigma$. Let $\bar u$ be as in the definition of $\setE_f$, so that in particular $\bar\eps=e(\bar u)$. From $(\eps_h,\sigma_h)\in \setE_0(B_r)$ and (\ref{eqbwlemmasqball}) we deduce that  there is $v_h\in H^1(B_r)$ such that $\eps_h=e(v_h)$, and $v_h(x)=\bar\eps(x_*) x$ around $\partial B$. We set $\bar u_h^*(x)=\bar u(x) + v_h(x)-\bar\eps(x_*) x$, so that $\{\bar u_h^*\ne\bar u\}\subset\subset B_r$ and $e(\bar u_h^*)=\eps_h^*$. Therefore, $(\eps_h^*, \sigma_h^*)\in \setE_f$.

From $\eps_h\weakto\bar\eps(x_*)$, we deduce $\eps_h^*\weakto\bar\eps$, and correspondingly for $\sigma_h^*$. Condition (\ref{eqlemmaseglimsup}) follows from (\ref{eqlemmaseqballstr}) and (\ref{eqlemmaseqball2deltaass}). The condition (\ref{eqlemmaunifbd}) follows from the corresponding condition in Lemma \ref{lemmaseqball}.
\end{proof}

\begin{theorem}     \label{theoub}
Let $(\bar\eps,\bar\sigma)\in L^2(\Omega;\overline \setD_\loc)\cap \setE_f$. Then there are sequences $(\eps_h,\sigma_h)\in \setE_f$ and $(\alpha_h,\beta_h)\in L^2(\Omega;\setD_\loc)$ such that
\begin{equation}
 \eps_h-\alpha_h \to 0 \hskip5mm\text{and}\hskip5mm \sigma_h-\beta_h\to0 \hskip5mm \text{ strongly in } L^2(\Omega),
\end{equation}
\begin{equation}
 \eps_h\weakto\bar\eps  \hskip5mm\text{and}\hskip5mm \sigma_h\weakto\bar\sigma \hskip5mm \text{ weakly in } L^2(\Omega) ,
\end{equation}
with $\{\eps_h\ne \bar\eps\}\cup\{\sigma_h\ne\bar\sigma\}\subset\subset\Omega$ for any $h$.
\end{theorem}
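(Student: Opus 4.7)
The proof plan is to combine a Vitali covering argument at Lebesgue points of $(\bar\eps,\bar\sigma)$ with the local construction of Lemma \ref{lemmaseqball2}, followed by a diagonal extraction. The compact-support property of the modifications produced by that lemma is what makes gluing compatible with the constraint $\setE_f$, and this will be the main point to verify.

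First I would fix $\delta\in(0,1)$. Since $(\bar\eps(x),\bar\sigma(x))\in\overline{\setD}_\loc$ for a.e.\ $x\in\Omega$ and $(\bar\eps,\bar\sigma)\in L^2(\Omega)$, almost every $x\in\Omega$ is a Lebesgue point of $(\bar\eps,\bar\sigma)$ at which the smallness assumption (\ref{eqlemmaseqball2deltaass}) of Lemma \ref{lemmaseqball2} is met for all sufficiently small radii $r$. Vitali's covering theorem then yields a finite pairwise disjoint family $\{B_i\}_{i=1}^{N_\delta}$ of closed balls with $\bar B_i\subset\Omega$ and $|\Omega\setminus\bigcup_i B_i|<\delta$, each centered at such a Lebesgue point with radius compatible with (\ref{eqlemmaseqball2deltaass}).

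On each $B_i$ I would apply Lemma \ref{lemmaseqball2} to obtain $(\eps_{h,i}^*,\sigma_{h,i}^*)\in\setE_f$ — coinciding with $(\bar\eps,\bar\sigma)$ outside a compactly contained subset of $B_i$ — together with data sequences $(\alpha_{h,i},\beta_{h,i})\in L^\infty(B_i;\setD_\loc)$. Because the $B_i$ are disjoint and each modification is compactly supported in its ball, one may glue by defining
\begin{equation}
 \eps_h^\delta=\eps_{h,i}^*\text{ on }B_i,\quad \eps_h^\delta=\bar\eps\text{ on }\Omega\setminus\bigcup_i B_i,
\end{equation}
and analogously $\sigma_h^\delta$. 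The corresponding displacements $\bar u_{h,i}^*$ from Lemma \ref{lemmaseqball2} agree with the background displacement $\bar u$ on a neighborhood of $\partial B_i$, so the glued displacement $u_h^\delta$ lies in $H^1(\Omega)$ with $e(u_h^\delta)=\eps_h^\delta$; likewise $\Div\sigma_h^\delta=f$ holds on each $B_i$ and on the complement, hence globally, and $(\eps_h^\delta,\sigma_h^\delta)\in\setE_f$. The data sequence is obtained by setting $\alpha_h^\delta=\alpha_{h,i}$, $\beta_h^\delta=\beta_{h,i}$ on each $B_i$ and equal to any fixed element of $\setD_\loc$ on the residual set, so that $(\alpha_h^\delta,\beta_h^\delta)\in L^2(\Omega;\setD_\loc)$.

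For fixed $\delta$ and $h\to\infty$ the weak convergences in (\ref{lemmaseqball2weakconv}) lift to $\eps_h^\delta\weakto\bar\eps$ and $\sigma_h^\delta\weakto\bar\sigma$ in $L^2(\Omega)$, while summing (\ref{eqlemmaseglimsup}) over $i$ and controlling the residual by absolute continuity of the $L^2$ integral gives
\begin{equation}
  \limsup_{h\to\infty}\bigl(\|\eps_h^\delta-\alpha_h^\delta\|_{L^2(\Omega)}^2+\|\sigma_h^\delta-\beta_h^\delta\|_{L^2(\Omega)}^2\bigr)\le \delta\,|\Omega|+R(\delta),
\end{equation}
with $R(\delta)\to 0$ as $\delta\to 0$; the bound (\ref{eqlemmaunifbd}) shows further that $\eps_h^\delta,\sigma_h^\delta,\alpha_h^\delta,\beta_h^\delta$ are $L^2$-bounded independently of $h$ and $\delta$. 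To conclude, I would take $\delta_k\downarrow 0$ and, using metrizability of the weak $L^2$-topology on bounded sets, select $h_k\uparrow\infty$ so that the weak distances of $(\eps_{h_k}^{\delta_k},\sigma_{h_k}^{\delta_k})$ to $(\bar\eps,\bar\sigma)$ and the strong-difference norms fall below $1/k$. The diagonal sequences then satisfy the claim. The main obstacle, as anticipated, lies in verifying that the gluing produces an element of $\setE_f$; this is resolved by the compact-support property $\{\eps_h^*\ne\bar\eps\}\cup\{\sigma_h^*\ne\bar\sigma\}\subset\subset B_r$ already encoded in Lemma \ref{lemmaseqball2}, after which the rest is essentially book-keeping.
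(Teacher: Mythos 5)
Your proposal is correct and follows essentially the same route as the paper's proof: a Vitali cover of the Lebesgue points of $(\bar\eps,\bar\sigma)$ by finitely many disjoint balls with small residual, local modification via Lemma \ref{lemmaseqball2} glued through the compact-support property, a fixed data point $(\eps_0,\sigma_0)\in\setD_\loc$ on the residual set controlled by absolute continuity of the integral, and a diagonal extraction justified by the uniform $L^2$ bounds. No substantive differences.
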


\begin{proof}
We define
\begin{align}
 \omega=\big\{x\in\Omega:&
 (\bar\eps(x),\bar\sigma(x))\in \overline \setD_\loc \text{ and }\\
& \liminf_{r\to0} \frac{1}{|B_r|} \int_{B_r(x)} |\bar\eps-\bar\eps(x)|^2+|\bar\sigma-\bar\sigma(x)|^2dx=0\big\}.
\end{align}
By the Lebesgue point theorem, $\calL^n(\Omega\setminus\omega)=0$. We further choose (arbitrarily) a pair
$(\eps_0,\sigma_0)\in \setD_\loc$ and define
\begin{equation}\label{eqdefetadelta}
 \eta(\delta)=\sup \left\{ \int_F |\bar\eps-\eps_0|^2+|\bar\sigma-\sigma_0|^2 dx: F\subset \Omega, |F|\le \delta\right\};
\end{equation}
by the continuity of the integral we have $\eta(\delta)\to0$ as $\delta\to0$.

Fix $\delta>0$. By Vitali's covering theorem, we can cover $\calL^n$-almost all of $\omega$ by countably many pairwise disjoint balls $B_i=B(x_i, r_i)\subset\subset\Omega$ with the property
\begin{equation}\label{eqgoodball}
   (\bar\eps(x_i),\bar\sigma(x_i))\in \overline \setD_\loc \hskip3mm \text{ and } \hskip3mm\frac{1}{|B_i|} \int_{B_i} |\bar\eps-\bar\eps(x_i)|^2+|\bar\sigma-\bar\sigma(x_i)|^2dx<\delta.
\end{equation}
and we can have a finite set $B_1,\dots, B_M$ of balls that obey (\ref{eqgoodball}) and $\calL^n(\Omega\setminus \cup_{i=1}^M B_i)<\delta$.

We define $(\eps_h^\delta, \sigma_h^\delta, \alpha_h^\delta, \beta_h^\delta)$, as in the assertion of Lemma \ref{lemmaseqball2}, in each of the balls $B_1,\dots, B_M$, and as $(\bar\eps,\bar\sigma,\eps_0,\sigma_0)$ in $\Omega\setminus \cup_{i=1}^M B_i$; the pair $(\eps_0,\sigma_0)\in \setD_\loc$ was chosen before (\ref{eqdefetadelta}).

Then, it is easily verified that $(\eps_h^\delta,\sigma_h^\delta)\in \setE_f$, $(\alpha_h^\delta, \beta_h^\delta)\in L^2(\Omega;\setD_\loc)$, and
\begin{equation}
\begin{split}
    &
 \limsup_{h\to0} \left(\|\eps_h^\delta-\alpha_h^\delta\|_{L^2(\Omega)}+\|\sigma_h^\delta-\beta_h^\delta\|_{L^2(\Omega)}\right)
 \le \\ & \sum_{i=1}^M |B_i| \delta + C \eta(\delta) \le \delta |\Omega|+\eta(\delta).
\end{split}
\end{equation}
At the same time, from (\ref{lemmaseqball2weakconv}) we obtain that $\eps_h\weakto\bar\eps$ and $\sigma_h\weakto\bar\sigma$. From (\ref{eqlemmaunifbd}), we easily see that the sequences are bounded in $L^2$ uniformly in $\delta$. Therefore, we can take a diagonal subsequence and conclude the proof.
\end{proof}

\bigskip

We now show that the set $\overline \setD \times \setE$ is closed under Data convergence.

\begin{theorem}    \label{theotwowellllwerb}
Let  $(\eps_h,\sigma_h)\in  \setE_f$ and $(\alpha_h,\beta_h)\in L^2(\Omega;\overline \setD_\loc)$ be such that
\begin{equation}
 \eps_h-\alpha_h \to 0 \hskip5mm\text{and}\hskip5mm \sigma_h-\beta_h\to0 \hskip5mm \text{ strongly in } L^2(\Omega).
\end{equation}
Assume that
\begin{equation}
 \eps_h\weakto\bar\eps  \hskip5mm\text{and}\hskip5mm \sigma_h\weakto\bar\sigma \hskip5mm \text{ weakly in } L^2(\Omega)
\end{equation}
for some $\bar\eps,\bar\sigma\in L^2(\Omega;\R^{n\times n}_\sym\times\R^{n\times n}_\sym)$. Then,
\begin{equation}
(\bar\eps,\bar\sigma)\in L^2(\Omega; \overline \setD_\loc)\cap \setE_f.
\end{equation}
\end{theorem}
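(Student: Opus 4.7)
The statement splits into closure under the linear differential constraint (membership in $\setE_f$) and closure under the pointwise nonlinear constraint (membership in $L^2(\Omega;\overline{\setD}_{\loc})$). The first part is standard: from $\Div\sigma_h=f$ and $\sigma_h\weakto\bar\sigma$ in $L^2$ one obtains $\Div\bar\sigma=f$ in $\mathcal{D}'(\Omega)$ by testing against $C^\infty_c(\Omega;\R^n)$; writing $\eps_h=e(u_h)$ and applying Korn's inequality modulo infinitesimal rigid motions yields a uniformly $H^1$-bounded sequence of potentials, whose weak limit $\bar u$ satisfies $e(\bar u)=\bar\eps$.

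For the pointwise part, the defining relation \eqref{eqdefDlocbar3_bis} forces $\alpha_h-\C^{-1}\beta_h=\mu_h b$ with $\mu_h\in L^\infty(\Omega;[-1,1])$ measurable. Extracting a weak-$*$ $L^\infty$ limit $\mu^*\in[-1,1]$ and using the strong convergences $\alpha_h-\eps_h\to 0$ and $\beta_h-\sigma_h\to 0$ in $L^2$, I obtain $\bar\eps-\C^{-1}\bar\sigma=\mu^* b$ a.e., which places $(\bar\eps(x),\bar\sigma(x))$ in the affine subspace $L$ of \eqref{eq:span_Dloc} with $|\mu^*|\le 1$ a.e. To couple $\bar\sigma\cdot b$ with $\mu^*$ I would invoke the div-curl lemma: a direct check shows that the Frobenius product $\eps\cdot\sigma$ vanishes identically on the wave cone $\Lambda$ of \eqref{eq:lambda_cone}, because if $\eps=c\odot\nu$ and $\sigma\nu=0$ then $\eps\cdot\sigma=c\cdot\sigma\nu=0$. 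Since $\Div\beta_h\to f$ strongly in $H^{-1}_\loc$ (as $\beta_h-\sigma_h\to 0$ in $L^2$) and $\alpha_h-e(u_h)\to 0$ in $L^2$, this yields $\int\alpha_h\cdot\beta_h\,\phi\,dx\to\int\bar\eps\cdot\bar\sigma\,\phi\,dx$ for every $\phi\in C^\infty_c(\Omega)$. Substituting the identity $\alpha_h\cdot\beta_h=\C^{-1}\beta_h\cdot\beta_h+\mu_h(\beta_h\cdot b)$ and using weak lower semicontinuity of the convex quadratic $\beta\mapsto\C^{-1}\beta\cdot\beta$ gives, for $\phi\ge 0$, the inequality $\limsup_h\int\mu_h(\beta_h\cdot b)\phi\,dx\le\int\mu^*(\bar\sigma\cdot b)\phi\,dx$. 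Combined with the elementary pointwise bound $\mu_h(\beta_h\cdot b)\ge -\C b\cdot b$ on $\overline{\setD}_{\loc}$, verified by a short case analysis of the three components of \eqref{eqdefDlocbar3_bis}, this already produces $\mu^*(\bar\sigma\cdot b)\ge -\C b\cdot b$ a.e.

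The main obstacle is that this single one-sided inequality does not yet deliver the sharp interior constraint $|\bar\sigma\cdot b+\alpha_-\mu^*|\le \C b\cdot b-\alpha_-$ on $\{|\mu^*|<1\}$, because $\overline{\setD}_{\loc}$ fails to be convex in the $(\sigma\cdot b,\mu)$ plane. To close the argument I would shift $(\alpha_h,\beta_h)$ by an optimal laminate from $\Lambda\cap L$ adapted to a direction $\bar\nu\in S^{n-1}$ realizing the minimum $\alpha_-$ in \eqref{eq:def_alpha_minus}, as supplied by Lemma \ref{le:Lambda_cap_L}, and repeat the compensated-compactness estimate in the shifted frame, thereby obtaining the analogous upper bound on $\bar\sigma\cdot b+\alpha_-\mu^*$; on the complementary set $\{|\mu^*|=1\}$ one recovers the sign condition directly from the pointwise bound together with strong $L^1$ convergence of $\mu_h$ on each component. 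Conceptually, one is verifying that $\overline{\setD}_{\loc}$ coincides with its $\Lambda$-convex hull, a fact already encoded in Lemma \ref{le:rank_one_connection} and Theorem \ref{theoub}, and the div-curl inequalities above furnish the analytic mechanism that converts this geometric property into closure of $\overline{\setD}\times\setE$ under Data convergence.
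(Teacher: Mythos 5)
Your treatment of membership in $\setE_f$ and of the reduction to the subspace $L$ (i.e.\ $\bar\eps-\C^{-1}\bar\sigma=\bar\mu b$ a.e.\ with $|\bar\mu|\le 1$) is correct and matches the paper's opening step, and your div-curl computation does deliver the crude one-sided bound $\bar\mu\,(\bar\sigma\cdot b)\ge -\C b\cdot b$. But the step you yourself flag as ``the main obstacle'' is exactly where the proof lives, and the mechanism you propose to close it does not work. Translating $(\alpha_h,\beta_h)$ by a fixed element $\hat z=(\hat\eps,\hat\sigma)\in\Lambda\cap L$ changes the div-curl quantity $(\alpha_h-\hat\eps)\cdot(\beta_h-\hat\sigma)$ only by terms affine in $(\alpha_h,\beta_h)$, hence weakly continuous; the resulting limit identity is algebraically equivalent to the one you already have, so no new inequality on $\bar\sigma\cdot b+\alpha_-\bar\mu$ can emerge from the ``shifted frame.'' More fundamentally, the null Lagrangian $\eps\cdot\sigma$ vanishes on the whole wave cone $\Lambda$ and is therefore blind to the parameters $\alpha_\pm$ that determine the shape of $\overline\setD_\loc$; any argument built solely on it certifies at best the region $\{\bar\mu\,(\bar\sigma\cdot b)\ge-\C b\cdot b,\ |\bar\mu|\le1\}$, which strictly contains $\overline\setD_\loc$ whenever $\alpha_->0$.

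What is actually needed is the larger class of quadratic forms that are nonnegative on $\Lambda\cap L$ but \emph{not} on all of $\Lambda$ --- in the paper, $\tilde Q=-(\sigma\cdot b+\alpha_-\mu)(\sigma\cdot b+\alpha_+\mu)$ composed with the projection $P_L$, cf.\ \eqref{eq:separate_Lambda_1}. Such forms are not covered by the classical div-curl lemma; one must first make them admissible for Tartar's theorem by adding $M|P_{L^\perp}z|^2+\delta|z|^2$ and then use the strong convergence $d((\eps_h,\sigma_h),L)\to0$ (which follows from $\eps_h-\alpha_h\to0$, $\sigma_h-\beta_h\to0$) to discard the penalization; this is the content of Lemma \ref{le:separate_on_L_cap_Lambda}. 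Even granted this lower semicontinuity, the non-convexity of $\overline\setD_\loc$ means a single such quantity does not suffice: the paper constructs, for every exterior point $(\eps_0,\sigma_0)$ of the relevant half of the complement, a separating function $f_{\eps_0,\sigma_0}=Q(\cdot-z_*)+\delta_{\eps_0,\sigma_0}(1-\mu)$ centered at a carefully chosen $z_*\in\setD^+_\loc$ on the extremal line $\sigma\cdot b+\alpha_-\mu=\mathrm{const}$ (Lemma \ref{le:separate_quadratic}), and then runs a countable-dense-family argument to obtain a single null set, finishing by symmetry under $z\mapsto-z$. None of these three ingredients --- the non-classical quadratic forms carrying $\alpha_\pm$, the $L$-localization lemma, and the separating-function/exhaustion argument --- is supplied by your proposal, so the interior constraint $|\bar\sigma\cdot b+\alpha_-\bar\mu|\le\C b\cdot b-\alpha_-$ remains unproved.
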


We remark that this result has the typical form of a lower semicontinuity statement. It is equivalent to the assertion that if $z_h=(\eps_h,\sigma_h)\in \setE_f$ converge weakly to $z=(\eps,\sigma)$ and
\begin{equation}
    \lim_{h\to\infty} \int_\Omega d^2(z_h, \bar\setD_\loc) dx=0,
\end{equation}
then $z_h\in \bar\setD_\loc$ almost everywhere. This assertion is in turn equivalent to the requirement that the set $\bar\setD_\loc$ be $\calA$-quasiconvex, where $\calA$ is the differential operator corresponding to the condition $z_h\in \setE_f$. This property of data sets can therefore be studied within the framework of $\calA$-quasiconvexity developed in \cite{FonsecaMueller1999}, though care needs to be exercised due to the fact that in the present case the differential operator has mixed order, see discussion in \cite[Example 3.10(b) and (e)]{FonsecaMueller1999}. By virtue of this connection, the general tools from \cite{FonsecaMueller1999} may be used instead of the explicit constructions in Lemma \ref{lemmaseqball} and Lemma \ref{lemmaseqball2}. In particular, the Data-hull $\bar\setD_\loc$ can be abstractly characterized
by infimizing over periodic pairs $(\eps, \sigma)$ with average zero and the constraints $\eps = e(u)$ and $\Div \sigma=0$.
However, a detailed elucidation of $\calA$-quasiconvexity
as it applies to Data-Driven problems is beyond the scope of the this paper
and we will pursued in future work.

\bigskip

To prove Theorem \ref{theotwowellllwerb}, we first identify suitable quadratic functions which are lower semicontinuous under the convergence assumptions in the theorem. Then we use these quantities to show that points outside the set $\overline \setD_\loc$, given by \eqref{eqdefDlocbar3_bis}, cannot arise as weak limits.

By the theory of compensated compactness the constraints $\eps = e(u)$ and $\Div \sigma =f$ guarantee (in combination with Korn's inequaltiy) that quadratic functionals which are nonnegative on the cone $\Lambda$ defined in  \eqref{eq:lambda_cone} are weakly sequentially lower semicontinuous. We know in addition that the sequence $(\eps_h, \sigma_h)$ approaches the set $\overline \setD_\loc$ and hence the space $L$ strongly. This allows us to use  quadratic quantities that are nonnegative only on $\Lambda \cap L$. This reduction is standard (and in fact essentially contained in Tartar's proof of lower semicontinuity by Fourier transform, see \cite{Tartar1979}, Theorem 11) but we recall the statement and the proof for the convenience of the reader.

\begin{lemma} \label{le:separate_on_L_cap_Lambda}
Recall from \eqref{eq:lambda_cone} that 
\begin{equation*} 
\Lambda = \{ (\eps, \sigma) \in \R^{n \times n}_\sym \times \R^{n \times n}_\sym : \eps = c \odot \nu, \, \sigma \nu = 0, \, c \in \R^n, \, \nu \in S^{n-1} \}.
\end{equation*}
Let $L \subset \R^{n \times n}_\sym \times \R^{n \times n}_\sym$ be a linear subspace. Assume that $Q:  \R^{n \times n}_\sym \times \R^{n \times n}_\sym \to \R$ is a quadratic form and
$$
    Q_{|\Lambda \cap L} \ge 0.
$$
Let $\Omega \subset \R^n$ be open. Assume that $f \in L^2(\Omega;\R^n)$
$$
    \eps_h = e(u_h), \quad \Div \sigma_h = f,
$$
$$
    (\eps_h, \sigma_h) \weakto (\bar \eps, \bar \sigma)
    \quad
    \hbox{in  $L^2(\Omega; \R^{n \times n}_\sym \times \R^{n \times n}_\sym)$}
$$
and
$$
    d((\eps_h, \sigma_h), L) \to 0 \quad \hbox{in $L^2(\Omega)$}.
$$
Then,
\begin{equation}  \label{eq:lsc_on_Lambda_cap_L}
 \int_\Omega Q (\bar \eps, \bar \sigma)  \, \varphi \, dx  \le \liminf_{h \to \infty} \int_\Omega Q (\eps_h, \sigma_h) \, \varphi \, dx \quad \forall \varphi \in C_c(\Omega), \, \, \varphi \ge 0.
\end{equation}
\end{lemma}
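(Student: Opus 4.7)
The plan is to reduce the statement to Tartar's classical Fourier-analytic compensated compactness argument, carried out on sequences that take values in the subspace $L$. I proceed in three reductions.

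First, because the set of $L^2$ fields taking values pointwise in the closed subspace $L$ is itself weakly closed in $L^2(\Omega; \R^{n\times n}_\sym \times \R^{n\times n}_\sym)$, the weak limit $(\bar \eps, \bar \sigma)$ lies in $L$ almost everywhere. Setting $w_h = (\eps_h - \bar\eps, \sigma_h - \bar\sigma)$ and polarizing $Q$ through its associated symmetric bilinear form $B$, the cross term $\int_\Omega B((\bar\eps,\bar\sigma), w_h)\varphi\,dx$ tends to zero since $w_h \weakto 0$ in $L^2$ and $(\bar\eps,\bar\sigma)\varphi \in L^2$. Thus it suffices to prove
\begin{equation*}
\liminf_{h\to\infty}\int_\Omega Q(w_h)\varphi\,dx \ge 0.
\end{equation*}
The limit $\bar\eps$ is itself a symmetrized gradient $e(\bar u)$ (as the weak $L^2$ limit of $e(u_h)$, after extracting a sequence bounded in $H^1$ modulo rigid motions), and $\Div \bar\sigma = f$; hence $w_h$ still obeys the exact constraints $\eps_h - \bar\eps = e(u_h - \bar u)$ and $\Div(\sigma_h - \bar\sigma) = 0$.

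Second, let $P$ denote the orthogonal projection of $\R^{n\times n}_\sym \times \R^{n\times n}_\sym$ onto $L$, extended pointwise to $L^2$, and $P^\perp = I - P$. Since $(\bar\eps, \bar\sigma) \in L$ a.e.\ and $P^\perp (\eps_h, \sigma_h) \to 0$ in $L^2$ by the hypothesis $d((\eps_h,\sigma_h), L) \to 0$, we also have $P^\perp w_h \to 0$ in $L^2$. Expanding
\begin{equation*}
Q(w_h) = Q(P w_h) + 2 B(P w_h, P^\perp w_h) + Q(P^\perp w_h),
\end{equation*}
Cauchy--Schwarz together with the weak-$L^2$ boundedness of $P w_h$ shows that the integrals of the last two terms vanish in the limit, reducing the task to $\liminf_{h\to\infty}\int_\Omega Q(P w_h)\varphi\,dx \ge 0$.

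Third, set $\tilde w_h := P w_h$. This sequence is $L$-valued, converges weakly to $0$ in $L^2$, and satisfies $\calA \tilde w_h = \calA w_h - \calA(P^\perp w_h) = -\calA(P^\perp w_h) \to 0$ in $H^{-1}$, because the combined differential operator $\calA$ (encoding $\eps = e(u)$ and $\Div \sigma = 0$) maps $L^2$ continuously into $H^{-1}$. The characteristic cone of $\calA$, viewed as an operator on $L$-valued fields, is $\Lambda \cap L$, on which $Q \ge 0$ by hypothesis. Tartar's Plancherel argument now closes the proof: localize $\tilde w_h$ via $\varphi^{1/2}$ using the standard $\calA$-commutator bound, Fourier transform, split $\widehat{\varphi^{1/2} \tilde w_h}(\xi)$ at each frequency into its orthogonal projection onto $\ker A(i\xi) \cap L$ (where $Q \ge 0$) plus a remainder in $L$ whose $L^2$ norm is controlled by $\|\calA(\varphi^{1/2} \tilde w_h)\|_{H^{-1}} = o(1)$, and pass to the limit.

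The crux is this last step: the standard Tartar--Murat lower-semicontinuity statement requires $Q \ge 0$ on the full cone $\Lambda$, which we do not have. The apparently cheaper route of producing a $\Lambda$-positive extension $\tilde Q(z) = Q(P z) + C|P^\perp z|^2$ for some large $C$ can fail — near a point of $\Lambda \cap L$ where $Q$ vanishes one can approach along $\Lambda$ with $|Q(Pz)|$ decaying more slowly than $|P^\perp z|^2$ — so carrying out the Fourier-space argument on $L$-valued sequences, as indicated above, is essential.
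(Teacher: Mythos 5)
Your first two reductions (subtracting the weak limit, then discarding the terms involving $P_{L^\perp}w_h$, which tends to zero strongly) are sound and consistent with the paper's setup. The difficulties are in your third step and, especially, in your closing claim.

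The third step has a concrete gap. The differential constraint on the strain component is the Saint--Venant condition $\curl^T\curl\,\eps_h=0$, which is second order; the corresponding operator maps $L^2$ into $H^{-2}$, not $H^{-1}$, so the assertion that $\calA(P_{L^\perp}w_h)\to 0$ in $H^{-1}$ and the remainder estimate by $\|\calA(\varphi^{1/2}\tilde w_h)\|_{H^{-1}}$ are not available as stated: your $\calA$ has mixed order. The paper's cure for the mixed order is Korn's inequality --- replace $\eps_h=e(u_h)$ by the first-order constraint $\curl\nabla u_h=0$ and run the argument on $(\nabla u_h,\sigma_h)$ --- but that device is incompatible with your projection onto $L$, since $Pw_h$ is no longer a pair $(e(v),\tau)$. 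Moreover, your frequency-by-frequency splitting into the projection onto $\ker A(i\xi)\cap L$ plus a controlled remainder tacitly requires a lower bound for $A(\xi)$ on the complement of $\ker A(\xi)\cap L$ inside $L$, uniform in $\xi$; since $\dim(\ker A(\xi)\cap L)$ may vary with $\xi$, this can only be obtained by a compactness argument at the price of an arbitrarily small $\delta|z|^2$ error --- it is not a standard quotable fact. These points are fixable (the paper's remark after Theorem \ref{theotwowellllwerb} points to the $\calA$-quasiconvexity framework for exactly this mixed-order situation), but as written the step is not justified.

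More importantly, your final claim --- that the extension route ``can fail'' and that the $L$-valued Fourier argument is ``essential'' --- is wrong, and it rules out precisely the proof the paper gives. You are right that $Q(P_Lz)+M|P_{L^\perp}z|^2$ need not be nonnegative on $\Lambda$ for any finite $M$. But $Q_{\delta,M}(z)=Q(z)+M|P_{L^\perp}z|^2+\delta|z|^2$ \emph{is} nonnegative on all of $\Lambda$ for every $\delta>0$ and a suitable $M=M(\delta)$: otherwise normalize counterexamples $z_k\in\Lambda$ to $|z_k|=1$ with $M_k\to\infty$, pass to a limit $\bar z\in\Lambda$, divide by $M_k$ to conclude $P_{L^\perp}\bar z=0$, hence $\bar z\in\Lambda\cap L$ with $Q(\bar z)\le-\delta<0$, a contradiction. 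The $\delta|z|^2$ term is exactly what defeats your ``slow decay along $\Lambda$'' obstruction. The paper then applies the classical quadratic compensated compactness theorem to $Q_{\delta,M}$ on the full space (after the Korn reduction), kills the $M$-term in the limit using the strong convergence $d((\eps_h,\sigma_h),L)\to0$ in $L^2$, and kills the $\delta$-term at the very end using the uniform $L^2$ bound on the sequence. This keeps the whole proof at the level of the black-box theorem and avoids redoing the Fourier analysis for a projected, mixed-order, non-constant-rank system.
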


\begin{proof}
We begin with some general comments. It suffices to prove the statement for  $\Omega$ being a ball. The general case then follows by a partition of unity. Since linear bounded functionals are weakly continuous, it also suffices to prove the result for the special case
\begin{equation} \label{eq:cc_zero_limit}
(\bar \eps, \bar \sigma) = 0 \quad \hbox{and} \quad f=0.
\end{equation}
To get the full result one can apply the special case to the sequence $(\eps_h - \bar \eps, \sigma_h - \bar \sigma)$.

To reduce the assertion to the standard result in compensated compactness, we first show that (up to an arbitrarily small error) $Q$ can  be extended to a quadratic form on $\R^{n \times n}_\sym \times \R^{n \times n}_\sym$ which is nonnegative on $\Lambda$. More precisely, we write $z = (\eps, \sigma)$ and   define
$$
    Q_{\delta,M}(z) := Q(z) + M |P_{L^\perp} z|^2 + \delta |z|^2 ,
$$
where $P_L$ denotes the orthogonal projection onto $L$. We claim that
\begin{equation} \label{eq:extension_Q}
\forall \delta > 0 \, \, \exists M > 0 \, \, \forall z \in \Lambda  \quad Q_{\delta,M}(z) \ge 0
\end{equation}
Assume that this was false. Then, there exists a $\delta_0 > 0$ and sequence $M_k \to \infty$ and $z_k \in \Lambda$ such that
\begin{equation}  \label{eq:extension_contradiction}
 Q(z_k) + M_k |P_{L^\perp} z_k|^2 + \delta_0 |z_k|^2 < 0.
\end{equation}
Since the left hand side is homogeneous of degree $2$, we may assume that $|z_k| = 1$. Passing to a subsequence (not renamed) we may assume that $z_k \to \bar z$ in $\subset \R^{n \times n}_\sym \times \R^{n \times n}_\sym$. Then $|\bar z| =1$ and $\bar z \in \Lambda$ since $\Lambda$ is closed. Dividing \eqref{eq:extension_contradiction} by $M_k$ and passing to the limit, we see that $P_{L^\perp} \bar z = 0$. Thus, $\bar z \in L$ and  \eqref{eq:extension_contradiction} implies that $Q(\bar z) + \delta_0 = Q(\bar z) + \delta_0 |\bar z|^2  \le 0$. Hence, $Q(\bar z) < 0$ and $\bar z \in \Lambda \cap L$. This contradiction finishes the proof of  \eqref{eq:extension_Q}.

Now, we prove \eqref{eq:lsc_on_Lambda_cap_L} under the assumption \eqref{eq:cc_zero_limit} for the case that  $\Omega$ is a ball. Fix $\varphi \in C_c(\Omega)$ with $\varphi \ge 0$.
Let
$$
    R = \limsup_{h \to \infty} \| (\eps_h, \sigma_h)\|_{L^2}^2.
$$
Since weakly convergent sequences are bounded, we have $R < \infty$. The constraint $\eps_h = e(u_h)$ is (locally) equivalent to the second order constraint $\curl^T \curl \eps_h = 0$. Since the classical results for compensated compactness are formulated for first order conditions, we work with $\nabla u_h$ rather than $\eps_h$.

By Korn's inequality there exist $u_h \in H^1(\Omega;\R^n)$ such that
$\eps_h = e(u_h)$ and
$$
    (\nabla u_h, \sigma_h) \weakto 0  \quad \hbox{in $L^2(\Omega; \R^{n \times n} \times \R^{n \times n}_\sym)$.}
$$
Let $\tilde \Lambda$ be the cone corresponding to the constraints
$\curl F_h = 0$  and $\Div \sigma_h = 0$, \ie,
$$
    \tilde \Lambda := \{ (F, \sigma) \in \R^{n \times n} \times \R^{n \times n}_\sym :
    F = c \otimes \nu, \, \sigma \nu = 0, \, c\in \R^n, \, \nu \in S^{n-1} \}.
$$
Let $\delta >0$ and let $M$ be such that $Q_{\delta,M}$ is nonnegative on $\Lambda$. Extend $Q_{\delta,M}$ trivially to  $\R^{n \times n} \times \R^{n \times n}_\sym$ by
$$
    \tilde Q_{\delta,M}(F, \sigma)  =Q_{\delta, M}( \mathop{\sym} F, \sigma).
$$
Then, $\tilde Q_{\delta, M} \ge 0$ on $\tilde \Lambda$ and the theory of compensated compactness gives (see \cite{Tartar1979}, Theorem 11)
\begin{equation}
0  \le  \liminf_{h \to \infty} \int_\Omega
\tilde Q_{\delta,M} (\nabla u_h, \sigma_h) \, \varphi \, dx
=  \liminf_{h \to \infty} \int_\Omega
Q_{\delta,M}(\eps_h, \sigma_h) \, \varphi \, dx
\end{equation}
Since $d((\eps_h, \sigma_h), L) \to 0$ strongly in $L^2$, we see that
$$
    M  |P_{L^\perp} (\eps_h, \sigma_h)|^2 \to 0 \quad \hbox{in $L^1(\Omega)$}.
$$
Thus
\begin{equation}
 0  \le \liminf_{h \to \infty} \int_\Omega
Q (\eps_h, \sigma_h) \, \varphi \, dx + \delta R \sup \varphi.
\end{equation}
Since $\delta > 0$ was arbitrary, the desired assertion \eqref{eq:lsc_on_Lambda_cap_L}
follows (recall that $(\bar \eps, \bar \sigma) = 0$).
\end{proof}

\begin{figure}[ht]  
\begin{tikzpicture}
\draw[->][style=thin] (0,-2) -- (0,3) node[anchor=west]    {$\mu$};
\draw[->][style=thin] (-6,0) -- (6,0) node[anchor=north]    {$\sigma \cdot b$};
\draw[style=thick]  (-6,-1) -- (2,-1);
\draw[style=thick]  (-2,1) -- (6,1);
\draw[style=dashed]  (2,-1) -- (1,1);
\draw[style=dashed] (-1,-1) -- (-2,1);
\draw[color=blue] (2,2) -- (3.75,-1.5); 
\draw[color=red] (1.5,2)--(5,-1.5);
\fill (2.5,1) circle (0.07)  node[anchor=south] {$P_*$};
\fill (2.75, .5) circle(0.07) node[anchor=east] {$P_0$};
\end{tikzpicture}
\caption{Illustration of the set $\overline \setD_\loc$ and the separation construction.
The solid black half-lines represent $\setD_\loc$. The set $\overline \setD_\loc$ is obtained by adding the parallelogram bounded  by the solid and dashed segments. The dashed segments lie on lines on which $\sigma \cdot b + \alpha_- \mu$ is constant. The separation of a point $P_0 \notin \overline \setD_\loc$ is 
indicated by the blue and red lines. 
The blue line is the  line through $P_0$ on which $\sigma \cdot b + \alpha_- \mu$ is constant. 
The intersection point with the line $\mu =1$ is $P_* = (\sigma_* \cdot b, 1)$. 
The red line is the line through $P_*$ on which $\sigma \cdot b + \alpha_+ \mu$ is constant. 
The function $Q(\eps- \eps_*, \sigma- \sigma_*)$ is positive on the narrow  
region between the red and blue lines and negative 
on the wide region between the red and blue line. It vanishes on the blue and red line. 
In particular, this function is strictly negative in the parallelogram bounded by the solid and shade lines and
on the solid half-line with $\mu = -1$. 
By adding a small multiple of $1 - \mu$, we obtain a  function that is strictly positive at $P_0$ and nonpositive on $\overline \setD_\loc$. } 
\label{fi:separation}
\end{figure}
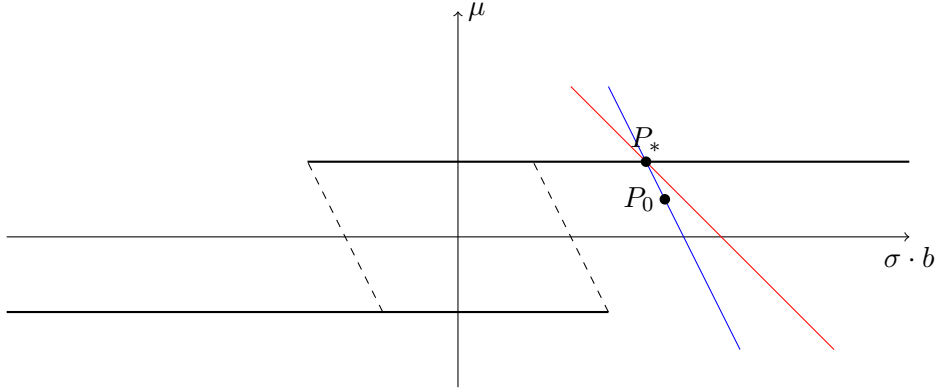

We now consider on $L$ the quadratic form
\begin{equation}  \label{eq:quadratic_separation_1}
 \tilde Q( \C^{-1} \sigma + \mu b, \sigma) = - (\sigma \cdot b + \alpha_-  \mu) (\sigma \cdot b + \alpha_+ \mu)
 \end{equation}
where $\alpha_\pm$ are defined in Lemma \ref{le:Lambda_cap_L}. We extend $Q$ to a quadratic form on $\R^{n \times n}_\sym \times \R^{n \times n}_\sym$ by  setting
\begin{equation} \label{eq:quadratic_separation_2}
Q(z) = \tilde Q(P_L z)
\end{equation}
Then, \eqref{eq:separate_Lambda_1} implies that
\begin{equation}  \label{eq:sep_quadratic_positive}
 Q \ge 0  \quad \hbox{on $\Lambda \cap L$.}
\end{equation}

\begin{lemma}[Separating quadratic functions, see Fig.~\ref{fi:separation}]  \label{le:separate_quadratic}
Let
$$
    U := \left\{ (\C^{-1} \sigma_0 + \mu_0 b, \, \sigma_0) :  \mu_0 \in [-1, 1), \,  \sigma_0 \cdot b + \alpha_- \mu_0 > \C b \cdot b - \alpha_-   \right\} \subset L.
$$
For $(\eps_0, \sigma_0) \in U$, define
$$
    ( \eps_*, \sigma_*) = (\eps_0, \sigma_0) + (1 - \mu_0) \hat z ,
$$
where $\hat z$ is as   \eqref{eq:optimal_Lambda_conn}. Then, $(\eps_*, \sigma_*) \in \setD^+_\loc$ and
\begin{equation}  \label{eq:sigm_*_on_line}
\sigma_* \cdot b + \alpha_- = \sigma_0 \cdot b + \alpha_- \mu_0
\end{equation}
Define
$$
    f_{\eps_0, \sigma_0}(\eps, \sigma) = Q (\eps - \eps_*, \sigma - \sigma_*) + \delta_{\eps_0, \sigma_0} \big[1 - (\eps- \C^{-1} \sigma) \cdot |b|^{-2} b\big] ,
$$
where $Q$ is given by \eqref{eq:quadratic_separation_1} and \eqref{eq:quadratic_separation_2} and
$$
    \delta_{\eps_0, \sigma_0}  = \frac12
    \big[ \sigma_0 \cdot b + \alpha_- \mu_0  - ( \C b \cdot b - \alpha_-)  \big]^2
    > 0.
$$
Then,
\begin{equation}  \label{eq:f_on_barD}
    f_{\eps_0, \sigma_0} \le 0   \quad \hbox{on $\overline \setD_\loc$}
\end{equation}
and
\begin{equation}  \label{eq:f_at_point}
f_{\eps_0, \sigma_0}(\eps_0, \sigma_0) > 0.
\end{equation}
\end{lemma}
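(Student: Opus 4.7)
The plan is to break the lemma into three steps, exploiting the fact that everything relevant happens on the two-dimensional quotient $L/L'$ parameterised by $(\sigma\cdot b, \mu)$, so the task is essentially to draw the picture in Fig.~\ref{fi:separation} algebraically.

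\emph{Step 1: explicit form of $(\eps_*, \sigma_*)$ and membership in $\setD^+_\loc$.} Recalling that $\hat z = (\hat c(\nu)\odot\nu, \hat\sigma)$ with $\hat\sigma = \C(\hat c(\nu)\odot\nu - b)$, I write $(\eps_*, \sigma_*) = (\eps_0, \sigma_0) + (1-\mu_0)\hat z$. By \eqref{eq:hat_sigma_cdot_b} and the choice of $\nu$ in \eqref{eq:optimal_Lambda_conn} we have $\hat\sigma\cdot b = -\alpha_-$, so $\sigma_*\cdot b = \sigma_0\cdot b - (1-\mu_0)\alpha_-$, which rearranges to \eqref{eq:sigm_*_on_line}. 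The identity $\C^{-1}\hat\sigma = \hat c(\nu)\odot\nu - b$ forces $\eps_* - \C^{-1}\sigma_* = b$, so $(\eps_*, \sigma_*)$ lies on the affine subspace $\mu = 1$ carrying $\setD^+_\loc$. The defining inequality of $U$ combined with the bound $\alpha_- < \C b\cdot b$ from \eqref{eq:bound_alpha_minus} then yields $\sigma_*\cdot b > -\C b\cdot b$, placing $(\eps_*, \sigma_*) \in \setD^+_\loc$.

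\emph{Step 2: strict positivity at $(\eps_0, \sigma_0)$.} The displacement $(\eps_0,\sigma_0) - (\eps_*,\sigma_*) = -(1-\mu_0)\hat z$ sits in $L\cap\Lambda$ on the line $\sigma\cdot b + \alpha_-\mu = 0$, along which $\tilde Q$ vanishes by \eqref{eq:quadratic_separation_1}, so $Q(\eps_0-\eps_*,\sigma_0-\sigma_*) = 0$. Since $(\eps_0-\C^{-1}\sigma_0)\cdot|b|^{-2}b = \mu_0$, only the linear term survives, giving $f_{\eps_0,\sigma_0}(\eps_0,\sigma_0) = \delta_{\eps_0,\sigma_0}(1-\mu_0) > 0$, as $\mu_0 < 1$ and $\delta_{\eps_0,\sigma_0} > 0$ by definition of $U$.

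\emph{Step 3: non-positivity on $\overline\setD_\loc$.} This is the expected obstacle, and amounts to algebraic bookkeeping tuned to the constant $\delta_{\eps_0,\sigma_0}=\tfrac12\epsilon_0^2$. Since $\overline\setD_\loc \subset L$, any $(\eps,\sigma)\in\overline\setD_\loc$ satisfies $\eps = \C^{-1}\sigma + \mu b$. Setting $T := \sigma\cdot b + \alpha_-\mu$, $T_0 := \sigma_0\cdot b + \alpha_-\mu_0$, and $\epsilon_0 := T_0 - (\C b\cdot b - \alpha_-) > 0$, and using Step~1 to compute $\sigma_*\cdot b + \alpha_+ = T_0 + (\alpha_+-\alpha_-)$, the factored form $Q = -(T-T_0)\bigl[(T-T_0) + (\alpha_+ - \alpha_-)(\mu-1)\bigr]$ together with the linear correction gives, with $u := T_0-T$ and $v := 1-\mu$,
\begin{equation*}
    f_{\eps_0,\sigma_0}(\eps,\sigma) = -u^2 - (\alpha_+-\alpha_-)\,uv + \tfrac12\epsilon_0^2\,v.
\end{equation*}
Case analysis on \eqref{eqdefDlocbar3_bis} (for $\mu = 1$, $\mu = -1$, or $\mu\in(-1,1)$) uniformly yields $T \le \C b\cdot b - \alpha_-$, so $u \ge \epsilon_0$; also $v \in [0,2]$. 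Since $\alpha_+ \ge \alpha_-$, this expression is monotonically decreasing in $u$ on $[0,\infty)$, so its maximum over the feasible region is attained at $u = \epsilon_0$:
\begin{equation*}
    f_{\eps_0,\sigma_0}(\eps,\sigma) \le -\epsilon_0^2 + v\,\epsilon_0\bigl[\tfrac12\epsilon_0 - (\alpha_+-\alpha_-)\bigr].
\end{equation*}
If the bracket is $\le 0$ the right-hand side is already $\le -\epsilon_0^2 < 0$; if it is positive, inserting the upper bound $v\le 2$ reduces it to $-2\epsilon_0(\alpha_+-\alpha_-) \le 0$. Thus $\delta_{\eps_0,\sigma_0} = \tfrac12\epsilon_0^2$ emerges as precisely the largest constant compatible with both sign cases while preserving strict positivity at $P_0$.
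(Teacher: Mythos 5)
Your Steps 1 and 2 are correct, and Step 1 is in fact slightly more complete than the paper's own proof, which does not explicitly verify $(\eps_*,\sigma_*)\in\setD^+_\loc$. The coordinates $(u,v)=(T_0-T,\,1-\mu)$, the factorization of $Q$, and the role of $\delta_{\eps_0,\sigma_0}=\tfrac12\epsilon_0^2$ all match the paper's argument.

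There is, however, a genuine error in Step 3: the claim that the case analysis ``uniformly yields $T\le \C b\cdot b-\alpha_-$, so $u\ge\epsilon_0$'' is false on the branch $\mu=1$, i.e.\ on $\setD^+_\loc$. By \eqref{eq:def_Dloc2} that set is cut out by the \emph{lower} bound $\sigma\cdot b\ge -\C b\cdot b$, so $T=\sigma\cdot b+\alpha_-$ is unbounded above there (take $\sigma=N\,\C b$ with $N$ large), and $u=T_0-T$ can be arbitrarily negative. Consequently the optimization ``maximum over the feasible region is attained at $u=\epsilon_0$'' does not cover $\setD^+_\loc$, and as written your proof of \eqref{eq:f_on_barD} fails on that half-line. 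The statement itself survives because on $\setD^+_\loc$ one has $v=1-\mu=0$, so the linear term and the cross term both vanish and $f_{\eps_0,\sigma_0}=-u^2\le 0$ with no constraint on $u$ needed. This is exactly why the paper treats $\setD^+_\loc$ as a separate case before running the $u\ge\epsilon_0$, $v\in[0,2]$ estimate on $\overline\setD_\loc\setminus\setD^+_\loc$ (where your bound on $T$ is correct, covering both $\setD^-_\loc$ and the parallelogram). Inserting that one-line case distinction makes your argument complete.
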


\begin{proof}  To prove  \eqref{eq:sigm_*_on_line} note that by  \eqref{eq:optimal_Lambda_conn}
$$ \sigma_* \cdot b + \alpha_- = \sigma_0 \cdot b -(1-\mu_0) \alpha_- + \alpha_- = \sigma_0 \cdot b + \alpha_- \mu_0.$$
For $(\eps, \sigma) \in L$, we have $\eps = \C^{-1} \sigma + \mu b$ and, thus,
\begin{align*} Q(\eps-\eps_*, \sigma- \sigma_*) =&\,  - \big [\sigma \cdot b + \alpha_- \, \mu - (\sigma_* \cdot  b+ \alpha_-)\big]
\big[(\sigma \cdot b + \alpha_+ \, \mu - (\sigma_* \cdot b+ \alpha_+)\big], \\
1 - (\eps- \C^{-1} \sigma) \cdot |b|^{-2} b =&\,  \,  1 - \mu.
\end{align*}
Using  \eqref{eq:sigm_*_on_line}, we see that $Q(\eps_0- \eps_*, \sigma_0 - \sigma_*) = 0$ and, hence,
$$ f_{\eps_0, \sigma_0}(\eps_0, \sigma_0) = \delta_{\eps_0, \sigma_0} (1- \mu_0) > 0.$$
This proves \eqref{eq:f_at_point}.

To show  \eqref{eq:f_on_barD}, we first note that for $(\eps, \sigma) \in \setD^+_\loc$ we have $\mu=1$ and, thus,
$$
    \forall (\eps, \sigma) \in \setD^+_\loc \quad f_{\eps_0, \sigma_0}(\eps, \sigma) = Q(\eps-\eps_*, \sigma-\sigma_*) = - [\sigma \cdot b - \sigma_* \cdot b]^2 \le 0.
$$
Finally, for $(\eps, \sigma) \in \overline \setD_\loc \setminus \setD^+_\loc$ we have
$\mu \in [-1, 1)$ and
$$
    \sigma \cdot b + \alpha_- \mu \le \C b \cdot b - \alpha_-.
$$
Thus, by \eqref{eq:sigm_*_on_line} and the definition of $U$
$$
    \sigma \cdot b + \alpha_- \, \mu - (\sigma_* \cdot  b+ \alpha_-) \le    \C b \cdot b - \alpha_-       - (\sigma_0 \cdot  b+ \alpha_- \mu_0) < 0
$$
and
$$
    \sigma \cdot b + \alpha_+ \, \mu - (\sigma_* \cdot  b+ \alpha_+) \le  \sigma \cdot b + \alpha_- \, \mu - (\sigma_* \cdot  b+ \alpha_-) < 0 ,
$$
where we used that $\alpha_+ \ge \alpha_-$ and  $1- \mu > 0$. It follows that
$$
    \forall (\eps, \sigma) \in \overline \setD_\loc \setminus \setD^+_\loc \quad
    f_{\eps_0, \sigma_0}(\eps, \sigma) \le - \big[   (\C b \cdot b - \alpha_-)  - (\sigma_0 \cdot  b+ \alpha_- \mu_0)\big]^2 + 2 \delta_{\eps_0, \sigma_0} \le 0
$$
by the definition of $\delta_{\eps_0, \sigma_0}$. This finishes proof of   \eqref{eq:f_on_barD}.
\end{proof}

\begin{proof}[Proof of Theorem  \ref{theotwowellllwerb}]
By convexity, $(\bar \eps, \bar \sigma) \in \setE_f$ and $\bar \eps - \C^{-1} \bar \sigma = \bar \mu b$ \ae \ with $|\bar \mu| \le 1$ \ae \ We will show that there exists a nullset $N$  such that
\begin{equation} \label{eq:separate_thm_1}
\quad \forall\,  x \in \Omega \setminus N
\quad  (\bar \eps(x), \bar \sigma(x)) \in \setD^+_\loc  \quad \hbox{or} \quad  \bar \sigma(x) \cdot b + \alpha_- \, \mu(x) \le \C b \cdot b - \alpha_- \, .
\end{equation}
Applying this result  to the sequences $(-\eps_h, -\sigma_h)$, $(-\alpha_h, -\beta_h)$ and the set $\setE_{-f}$, we get the existence of a null set $N'$ such that
\begin{equation} \label{eq:separate_thm_2}
\quad \forall\,  x \in \Omega \setminus N
\quad  (\bar \eps(x), \bar \sigma(x)) \in \setD^-_\loc  \quad \hbox{or} \quad  \bar \sigma(x) \cdot b + \alpha_- \, \mu(x) \ge -\C b \cdot b + \alpha_-\, .
\end{equation}
The combination of \eqref{eq:separate_thm_1} and \eqref{eq:separate_thm_2} shows that $(\bar \eps, \bar \sigma) \in \overline \setD_\loc$ \ae

To prove \eqref{eq:separate_thm_1} we first fix $(\eps_0, \sigma_0)$ as in Lemma  \ref{le:separate_quadratic} and consider the function $f_{\eps_0, \sigma_0}$. Then, $f_{\eps_0, \sigma_0}(\alpha_h, \beta_h)  \le 0$ by   \eqref{eq:f_on_barD}. Since $f$ consists of linear and quadratic terms the strong convergence of $\eps_h - \alpha_h$ and $\sigma_h - \beta_h$ implies that
$$
    \limsup_{h \to \infty} \int_\Omega f_{\eps_0, \sigma_0}(\eps_h, \sigma_h) \, \varphi \, dx \le 0  \quad \forall \varphi \in C_c(\Omega) \, \, \varphi \ge 0.
$$
Thus Lemma \ref{le:separate_on_L_cap_Lambda} yields
$$
    \int_\Omega f_{\eps_0, \sigma_0}(\bar \eps, \bar \sigma) \, \varphi \, dx \le 0
    \quad \forall \varphi \in C_c(\Omega) \, \, \varphi \ge 0.
$$
Hence, there exists a null set $N_{\eps_0, \sigma_0}$ such that
$$
    f_{\eps_0, \sigma_0}(\bar \eps, \bar \sigma) \le 0
    \quad \hbox{in $\Omega \setminus  N_{\eps_0, \sigma_0}$.}
$$
Considering a countable dense set of points in the set $U$ in Lemma \ref{le:separate_quadratic} and using continuity of $(\eps_0, \sigma_0) \to f_{\eps_0, \sigma_0}(\bar \eps(x), \bar \sigma(x))$, we see that there is a single null set $N$ such that
$$
    \forall (\eps_0, \sigma_0) \in U    \quad \forall x \in \Omega\setminus N \quad  f_{\eps_0, \sigma_0}(\bar \eps(x), \bar \sigma(x)) \le 0.
$$
It now follows from \eqref{eq:f_at_point} that $(\bar \eps(x), \bar \sigma(x)) \notin U$ for all $x \in \Omega \setminus N$. Hence, for all $x \in \Omega\setminus N$
$$
    \bar \sigma(x) \cdot b + \alpha_- \, \bar \mu(x) \le \C b \cdot b - \alpha_- \quad \hbox{or} \quad  \bar \sigma(x) \cdot b + \alpha_- \, \bar \mu(x) >  \C b \cdot b - \alpha_- \hbox{ and  } \bar \mu(x) =1.
$$
In the second case we get $\bar \sigma(x)  \cdot b > \C b \cdot b - 2 \alpha_-$ and $\bar \mu(x) = 1$. Since $\alpha_- \le \C b \cdot b$, it follows that $\bar \sigma \cdot b \ge - \C b \cdot b$ and, thus, $(\bar \eps(x), \bar \sigma(x)) \in \setD^+_\loc$. This finishes the proof of \eqref{eq:separate_thm_1} and, hence, the proof of the theorem.
\end{proof}

\section*{Acknowledgments}

This work was partially supported
by the Deutsche Forschungsgemeinschaft through the Sonderforschungsbereich 1060
{\sl ``The mathematics of emergent effects''}.

\nocite{MuratTartar1997}
\bibliography{biblio}
\bibliographystyle{alpha-noname}

\end{document}